\newtheorem{theorem}{Theorem}[section]
\newtheorem{lemma}[theorem]{Lemma}
\newtheorem{corollary}[theorem]{Corollary}
\newtheorem{proposition}[theorem]{Proposition}
\newtheorem{remark}[theorem]{Remark}
\newtheorem{definition}[theorem]{Definition}
\newtheorem{example}[theorem]{Example}
\numberwithin{equation}{section}
\newcommand{\cz}{{\mathbb C}}
\newcommand{\nz}{{\mathbb N}}
\newcommand{\rz}{{\mathbb R}}
\newcommand{\sz}{{\mathbb S}}
\newcommand{\bfa}{\mathbf{a}}
\newcommand{\bfc}{\mathbf{c}}
\newcommand{\bfg}{\mathbf{g}}
\newcommand{\bfh}{\mathbf{h}}
\newcommand{\bfr}{\mathbf{r}}
\newcommand{\bfs}{\mathbf{s}}
\newcommand{\calA}{\mathcal{A}}
\newcommand{\calB}{\mathcal{B}}
\newcommand{\calE}{\mathcal{E}}
\newcommand{\calK}{\mathcal{K}}
\newcommand{\calW}{\mathcal{W}}
\newcommand{\scrC}{\mathscr{C}}
\newcommand{\scrD}{\mathscr{D}}
\newcommand{\scrF}{\mathscr{F}}
\newcommand{\scrH}{\mathscr{H}}
\newcommand{\scrK}{\mathscr{K}}
\newcommand{\scrL}{\mathscr{L}}
\newcommand{\scrS}{\mathscr{S}}
\newcommand{\bfkappa}{\boldsymbol{\kappa}}
\newcommand{\bfmu}{\boldsymbol{\mu}}
\newcommand{\cl}{\mathrm{cl}}
\newcommand{\dbar}{d\hspace*{-0.08em}\bar{}\hspace*{0.1em}}
\newcommand{\eps}{\varepsilon}
\newcommand{\forget}[1]{}
\newcommand{\lra}{\longrightarrow}
\newcommand{\op}{\mathrm{op}}
\newcommand{\rpbar}{\overline{\rz}_+}
\newcommand{\spk}[1]{\langle#1\rangle}
\newcommand{\wh}{\widehat}
\newcommand{\wt}{\widetilde}
\begin{document}
\title[Green Operators in the Edge Algebra Formalism]{
Singular Green Operators of Finite Regularity\\ in the Edge Algebra Formalism}

\author{J\"org Seiler}
\address{Dipartimento di Matematica, Universit\`{a} di Torino, Italy}
\email{joerg.seiler@unito.it}

\begin{abstract}
We introduce a calculus for parameter-dependent singular Green operators on the half-space $\rz^n_+$ that combines both elements of Grubb's calculus for boundary value problems of finite regularity 
and techniques of Schulze's calculus for pseudodifferential operators on manifolds with edges. 
\end{abstract}

\maketitle

\tableofcontents

\section{Introduction}\label{sec:intro}

Since the seminal papers of Seeley \cite{Seel67}, \cite{Seel69} on complex powers of elliptic operators and resolvents of boundary value problems, respectively, it became a broadly applied technique in partial differential equations and geometric ana\-lysis to exploit the pseudodifferential structure of resolvents or other parameter-dependent families of operators. Applications are quite manifold, including resolvent estimates and resolvent trace asymptotics, complex and imaginary powers, $H_\infty$-calculus, maximal $L^p$-regularity, heat kernels and heat trace asymptotics, index theory, spectral asymptotics, $\zeta$-functions and regularized determinants. 

In \cite{Bout1}, Boutet de Monvel introduced a calculus for operators on manifolds with boundary. It contains all classical differential boundary value problems and, in case of Shapiro-Lopatinskii ellipticity, Fredholm inverses can be constructed in the calculus. If we locally model the manifold as a half-plane $\rz^n_+$ with boundary $\rz^{n-1}$, one considers operators of the form 
\begin{align}\label{eq:block-matrix}
 \begin{pmatrix}A_++G & K\\ T & Q\end{pmatrix}: 
   \begin{matrix}\scrS(\rz^n_+,\cz^L)\\ \oplus\\ \scrS(\rz^{n-1},\cz^M)\end{matrix}
   \lra 
   \begin{matrix}\scrS(\rz^n_+,\cz^{L^\prime})\\ \oplus\\ 
   \scrS(\rz^{n-1},\cz^{M^\prime})\end{matrix},
\end{align}
acting between spaces of rapidly decreasing functions $($extendable to Sobolev and Besov spaces$)$; here $L,M,L^\prime,M^\prime$ are non-negative integers, modelling involved vector bundles over the manifold $($these numbers are allowed to be zero$)$. Moreover, $A_+$ is the restriction of a pseudodifferential operator to the half-space, $K$ is a so-called potential or Poisson operator, $T$ is a trace operator, and $Q$ is a pseudodifferential operator on the boundary. In the terminology of \cite{Bout1}, such matrices are called Green operators, while those with $A_+=0$ are termed singular Green operators. 
In the light of what has been said above, it is natural to seek a corresponding calculus for  parameter-dependent boundary value problems, where parameter-elliptic elements possess an inverse (for large parameter) in the calculus. Typically, the parameter ranges in a conical subset of the Euclidean space; in what follows we focus for simplicity on a real parameter $\mu\ge0$.  

Such a calculus has been realized by Grubb in the middle 1980's. A complete exposition one can find in \cite{Grub}, where also applications as mentioned above are described. In particular, the calculus allows the construction of resolvents of pseudodifferential (and non-differential) operators, subject to boundary conditions. One of the key features in this calculus is to start out from operators $A(\mu)=a(x,D,\mu)$ build upon symbols of finite regularity, i.e., satisfying uniform estimates of the form 
\begin{equation*}
|D^\beta_x D^\alpha_\xi D^j_\mu a(x,\xi,\mu)| \le C_{\alpha\beta j}
\big[\spk{\xi}^{\nu-|\alpha|}+\spk{\xi,\mu}^{\nu-|\alpha|}\big]\spk{\xi,\mu}^{d-\nu-j} 
\end{equation*}
for arbitrary orders of derivatives. Here, $d$ is the order of the symbol, and $\nu$ its so-called regularity (note that regularity in this context does \textit{not} refer to smoothness of the symbols in the $x$-variable). This structure induces corresponding notions of order and regularity in the class of singular Green operators. 

In the early 1980's Schulze initiated his programme on calculi of pseudodifferential operators on manifolds with singularities, cf. \cite{ReSc}, \cite{Schu4}, \cite{Schu91}. In his approach, manifolds with boundary can be interpreted as a special case of a manifold with edge. Near the edge, a manifold with edge has the structure of a fibre-bundle over a smooth manifold with fibre being a cone over some base manifold; if the base is a single point one obtains a manifold with boundary. The operators considered on a manifold with edge are  more general than those on a manifold with boundary, in particular, there is no concept of transmission condition involved. The resulting pseudodifferential calculus, the so-called edge algebra, again consists of block-matrices similar to \eqref{eq:block-matrix}, but with $A_+$ replaced by more general operators, and a different class of singular Green operators. We cannot go into details here, but only  want to point out one key ingredient, which is significant for the present paper, namely the systematic use of operator-valued pseudodifferential symbols acting in Hilbert spaces equipped with a strongly continuous group action. As it turns out, singular Green operators have a very elegant and transparent description in this set-up. In the end, the calculus for such operator-valued symbols is nearly identical with that of usual scalar-valued pseudodifferential symbols in Euclidean space (as laid out, for example, in the text-book of Kumano-go \cite{Kuma}). 
Schulze also introduced a parameter-dependent version of the edge algebra. It allows the construction of resolvents of differential operators, subject to edge conditions.  However, pseudodifferential (non-differential) operators are not covered in this setting. A parameter-dependent version of Boutet de Monvel's calculus using the formalism of the edge algebra is described in Schrohe, Schulze \cite{ScSc1}. 

In the present paper we introduce a calculus of singular Green operators on the half-space $\rz^n_+$, combining both Grubb's concept of operators of finite regularity and Schulze's concept of operator-valued pseudodifferential symbols. Grubb's class of singular Green operators contains two natural sub-classes of operators: those of infinite regularity $($which we refer to as strongly parameter-dependent operators$)$ and so-called weakly parameter-dependent operators. One of our key observations is that weakly parameter-dependent operators can be characterized in the framework of operator-valued symbols in the spirit of Schulze's theory $($this is also true for strongly parameter-dependent operators, but this was already known, cf. \cite{Schr01} for example). In Section \ref{sec:poisson} we show this result in detail for the class of Poisson operators. We then implement the rsulting structure of operator-valued symbols in our class of singular Green operators which, by definition, consists in the sum of the weakly and strongly parameter-dependent operators. Doing so, it does not coincide with Grubb's class but is a subclass of it. We show that our class admits a full calculus and do recover many results stated in \cite{Grub} in an alternative way. In particular, this concerns the parametrix construction discussed in Section \ref{sec:param}.

We hope that our findings could prove useful in a possible generalization of the concept of finite regularity from boundary value problems to the edge algebra, thus enabling the treatment of resolvents of pseudodifferential operators on manifolds with edges. We plan to address this in future research. 

The paper is structured as follows: In Section \ref{sec:opval} we introduce some classes of operator-valued symbols in Hilbert spaces with group actions, called strongly and weakly parameter-dependent symbols, and discuss their calculus. The sum of such symbols leads to a new class whose calculus is discussed in Section \ref{sec:finreg}. After the above mentioned analysis of Poisson operators in Section \ref{sec:poisson} we introduce our class of singular Green operators in Section \ref{sec:green}. Section \ref{sec:param} is devoted to the paramtrix construction of operators of the form ``identity plus singular Green operator'' of positive regularity. Section \ref{sec:green03} introduces singular Green operators of positive type and Section \ref{sec:Sobolev} concerns the action of singular Green operators in Sobolev spaces. In the first section of the appendix, Section \ref{sec:app02}, we prove a characterization of pseudodifferential operators with operator-valued symbols by iterated commutators in the sense of the classical result of Beals \cite{Beal77}, \cite{Beal79}, which we need in the parametrix construction. In the final Section \ref{sec:app01} we recall the definition of certain scales of Sobolev spaces on the half-axis which will be used frequently throughout the text. 

\textbf{Important notation:} For $x\in\rz^n$ and $y\in\rz^m$ we shall write 
 $$\spk{x}=(1+|x|^2)^{1/2},\qquad \spk{x,y}=\spk{(x,y)}=(1+|x|^2+|y|^2)^{1/2}.$$ 
For two functions $f,g$ defined on some set $\Omega$ the notation $f\preceq g$ or $f(y)\preceq g(y)$ means that there exists a constant $C\ge 0$ such that $f(y)\le Cg(y)$ for all $y\in\Omega$. In a chain like $f\preceq g\preceq h$, the constant may be different in any inequality
\section{Operator-valued symbols and their calculus}\label{sec:opval}

Let $E$ be a Hilbert space. A group-action is a map $\kappa:(0,+\infty)\to\scrL(E)$ which is 
continuous in the strong operator topology and satisfies $\kappa_1=1$ as well as  
$\kappa_\lambda\kappa_\sigma=\kappa_{\lambda\sigma}$ for every $\lambda,\sigma>0$. 
A standard result from semigroup theory ensures the existence of a constant $M=M(\kappa)\ge0$ 
such that 
\begin{equation}\label{eq:M-kappa}
 \|\kappa_\lambda\|_{\scrL(E)}\preceq \max(\lambda,\lambda^{-1})^M.
\end{equation} 
Given such a group-action, we shall write 
 $$\kappa(y)=\kappa_{\spk{y}},\quad \kappa^{-1}(y)=\kappa^{-1}_{\spk{y}}=\kappa_{1/\spk{y}}\qquad  (y\in\rz^m).$$  

For the present paper, the following example of a group-action is fundamental.  

\begin{example}\label{ex:standard-group-action}
For smooth scalar-valued functions $\phi$ defined on $\rz_+$ $[$or $\rz]$ with compact support we define 
 $$(\kappa_\lambda \phi)(t)=\lambda^{1/2}\phi(\lambda t),\qquad \lambda>0.$$
We extend $\kappa_\lambda$ to act on the space of distributions $\scrD^\prime(\rz_+)$ $[$or $\scrD^\prime(\rz)]$ by 
 $$\spk{\kappa_\lambda T,\phi}=\spk{T,\kappa_\lambda^{-1}\phi}.$$
We shall call this the ''standard group-action'' on $\rz_+$ $[$or $\rz]$; its restriction to various function spaces defines a group-action in the above sense, for example, the restriction to the Sobolev spaces $H^s(\rz_+)$ $[$or $H^s(\rz)]$. Note that it defines a group of unitary operators 
on the standard $L^2$-spaces. 
\end{example}

In the following let $E_0$, $E_1$, $E_2$ be Hilbert spaces equipped with group actions 
$\kappa_0$, $\kappa_1$, and $\kappa_2$, respectively. 

\subsection{Strongly parameter-dependent symbols}

We recall the definition of parameter-dependent symbols in the sense of Schulze. 

\begin{definition}
Let $d\in\rz$. Then $S^d_{1,0}(\rz^{n-1}\times\rpbar;E_0,E_1)$ 
is the space of all smooth functions $a(x,\xi;\mu):\rz^n\times\rz^n\to\scrL(E_0,E_1)$ satisfying 
\begin{equation}\label{eq:symb01}
   \|\kappa^{-1}_1(\xi,\mu)\big\{D^\alpha_{\xi}D^\beta_{x}D^j_\mu
   a(x,\xi;\mu)\big\}\kappa_0(\xi,\mu)\|_{\scrL(E_0,E_1)}
   \preceq \spk{\xi,\mu}^{d-|\alpha|-j}.
\end{equation} 
for every order of derivatives. 
\end{definition} 

Due to \eqref{eq:M-kappa}, the associated space of regularizing symbols 
$S^{-\infty}(\rz^{n}\times\rpbar;E_0,E_1)$, defined by taking the intersection over all $d\in\rz$, consists of those symbols $a$ satisfying 
 $$\|D^\alpha_{\xi}D^\beta_{x}D^j_\mu a(x,\xi;\mu)\|_{\scrL(E_0,E_1)}\preceq \spk{\xi,\mu}^{-N}$$ 
for every $N\ge0$ and every order of derivatives. Obviously, it does not depend on the involved semi-groups. In other terms, 
 $$S^{-\infty}(\rz^{n}\times\rpbar;E_0,E_1)
   =\scrC^\infty_b(\rz^{n}_{x},\scrL(E_0,E_1))\,\wh{\otimes}_\pi\,\scrS(\rz^{n}\times\rpbar),$$ 
the completed projective tensor-product of the space of smooth $\scrL(E_0,E_1)$-valued functions with bounded 
derivatives of arbitrary order and the space of rapidly decreasing functions. 

\subsubsection{Homogeneous functions}

Let $V\subseteq (\rz^{n}\times\rpbar)\setminus\{0\}$ be a conical set. 
A function $a:\rz^{n}\times V\to\scrL(E_0,E_1)$ is called $\kappa$-homogeneous $($or also twisted 
homogeneous$)$ of degree $d\in\rz$ if 
$$a(x,\lambda\xi;\lambda\mu)=\lambda^d \kappa_{1,\lambda} a(x,\xi;\mu)\kappa_{0,\lambda}^{-1} 
\qquad\forall\;(\xi,\mu)\in V\quad\forall\,\lambda>0.$$

\begin{definition}
Let us denote by $S^{(d)}(\rz^{n}\times\rpbar;E_0,E_1)$ the space of all smooth functions 
$a:\rz^{n}\times V\to\scrL(E_0,E_1)$ with $V=(\rz^{n}\times\rpbar)\setminus\{0\}$ 
that are $\kappa$-homogeneous of degree $d$ and satisfy, for every order of derivatives, 
\begin{equation}\label{eq:symb01_hom}
\|\kappa^{-1}_{1,|\xi,\mu|}\big\{D^\alpha_{\xi}D^\beta_{x}D^j_\mu
a(x,\xi;\mu)\big\}\kappa_{0,|\xi,\mu|}\|_{\scrL(E_0,E_1)}
\preceq |\xi,\mu|^{d-|\alpha|-j}
\end{equation} 
\end{definition}

In particular, if $a\in S^{(d)}(\rz^{n}\times\rpbar;E_0,E_1)$ then 
 $$a(x,\xi;\mu)=|\xi,\mu|^d 
   \kappa_{1,|\xi,\mu|}
   a\Big(x,\frac{\xi}{|\xi,\mu|};\frac{\mu}{|\xi,\mu|}\Big)
   \kappa_{0,|\xi,\mu|}^{-1}.$$

If $\chi(\xi,\mu)$ is an arbitrary $0$-excision function and if 
$a\in S^{(d)}(\rz^{n}\times\rpbar;E_0,E_1)$ then 
$\chi a\in S^{d}_{1,0}(\rz^{n}\times\rpbar;E_0,E_1)$. 

\subsubsection{Classical symbols}

Classical symbols (also called poly-homogeneous symbols) have an asymptotic expansion in homogeneous components. 

\begin{definition}
$S^d(\rz^{n}\times\rpbar;E_0,E_1)$ denotes the space of all symbols 
$a\in S^d_{1,0}(\rz^{n}\times\rpbar;E_0,E_1)$ for which exists a sequence 
$a^{(d-j)}\in S^{(d-j)}(\rz^{n}\times\rpbar;E_0,E_1)$, $j\in\nz_0$, such that, for every $N\in\nz$,  
 $$a-\sum_{j=0}^{N-1}\chi(\xi,\mu) a^{(d-j)}\;\in\; S^{d-N}_{1,0}(\rz^{n}\times\rpbar;E_0,E_1).$$ 
\end{definition}

If $a\in S^d(\rz^{n}\times\rpbar;E_0,E_1)$ is as in the previous definition, then $a^{(d)}$ is 
called the homogeneous principal symbol of $a$. It is well defined, since 
 $$a^{(d)}(x,\xi;\mu)=\lim_{\lambda\to+\infty}
   \lambda^{-d}\kappa_{1,\lambda}^{-1}a(x,\lambda\xi;\lambda\mu)\kappa_{0,\lambda},
   \qquad (\xi,\mu)\not=0.$$

\subsection{Weakly parameter-dependent symbols}

Roughly speaking, strong para\-meter-dependence means considering the parameter $\mu$ together with the co-variable $\xi$ as a new joint co-variable $\eta:=(\xi,\mu)$ and then to impose the standard estimates  of pseudodifferential symbols in $\eta$. We now introduce a class where the behaviour is different$:$ differentiation with respect to $\xi$ only improves the decay in $\xi$; still, differentiation with respect to $\mu$ improves the decay in $\eta=(\xi,\mu)$. We shall use the terminology ''weak'' parameter-dependence. 

\begin{definition}\label{def:weakly}
Let $d,\nu\in\rz$. Then $\wt{S}^{d,\nu}_{1,0}(\rz^{n}\times\rpbar;E_0,E_1)_{\bfkappa,\bfkappa}$ 
denotes the space of all symbols satisfying the estimates 
 $$\|\kappa^{-1}_1(\xi,\mu)\big\{D^\alpha_{\xi}D^\beta_{x}D^j_\mu
   a(x,\xi;\mu)\big\}\kappa_0(\xi,\mu)\|_{\scrL(E_0,E_1)}
   \preceq \spk{\xi}^{\nu-|\alpha|}\spk{\xi,\mu}^{d-\nu-j}.$$
\end{definition} 

The reason for using the subscript ``$\bfkappa,\bfkappa$" will become clear later on, 
cf. Section \ref{sec:green01}. 
The corresponding class of regularizing symbols is defined by 
 $$\wt{S}^{d-\infty,\nu-\infty}(\rz^{n}\times\rpbar;E_0,E_1)_{\bfkappa,\bfkappa}
   :=\mathop{\mbox{\Large$\cap$}}_{N\in\rz}\wt{S}^{d-N,\nu-N}_{1,0}
    (\rz^{n}\times\rpbar;E_0,E_1)_{\bfkappa,\bfkappa}.$$
It consists of those symbols with 
 $$\|\kappa^{-1}_1(\xi,\mu)\big\{D^\alpha_{\xi}D^\beta_{x}D^j_\mu
a(x,\xi;\mu)\big\}\kappa_0(\xi,\mu)\|_{\scrL(E_0,E_1)}
\preceq \spk{\xi}^{-N}\spk{\xi,\mu}^{d-\nu-j}$$
for arbitrary $N\ge 0$. Due to \eqref{eq:M-kappa}, since 
$1\preceq \spk{\mu}\spk{\xi,\mu}^{-1}\preceq\spk{\xi}$ and since $N$ is arbitrary, we may equivalently use the estimates 
 $$\|\kappa^{-1}_1(\mu)\big\{D^\alpha_{\xi}D^\beta_{x}D^j_\mu
a(x,\xi;\mu)\big\}\kappa_0(\mu)\|_{\scrL(E_0,E_1)}
\preceq \spk{\xi}^{-N}\spk{\mu}^{d-\nu-j}$$
for arbitrary $N\ge 0$. Note that this space generally depends on the involved group-actions. 

\begin{remark}
Asymptotic summation of a sequence of symbols $a_k$ of order $d-k$ and regularity $\nu-k$ is well-defined. In fact, 
if $\chi(\xi)$ is a $0$-excision function and $0<\eps_k\xrightarrow{k\to+\infty}0$ sufficiently fast, then the series 
 $$a(x,\xi;\mu)=\sum_{k=0}^{+\infty} \chi(\eps_k\xi) a_k(x,\xi;\mu)$$
converges in $\wt{S}^{d,\nu}_{1,0}(\rz^{n}\times\rpbar;E_0,E_1)_{\bfkappa,\bfkappa}$  and 
$a-\sum_{k=0}^{N-1} a_k$ belongs to $\wt{S}^{d-N,\nu-N}_{1,0}(\rz^{n}\times\rpbar;E_0,E_1)_{\bfkappa,\bfkappa}$ 
for every $N$. 
\end{remark}

\subsubsection{Classical symbols} 

Also in case of weak parameter-dependence we can introduce the subclass of classical symbols. 

\begin{definition}\label{def:kappa-hom}
Let $\wt{S}^{(d,\nu)}(\rz^{n-1}\times\rpbar;E_0,E_1)_{\bfkappa,\bfkappa}$ 
be the space of all smooth functions 
$\wt{a}:\rz^{n}\times V\to\scrL(E_0,E_1)$ with $V=(\rz^{n}\setminus\{0\})\times\rpbar$ that are
$\kappa$-homogeneous of degree $d$, and satisfy the estimates   
\begin{equation}\label{eq:symb02_hom}
\|\kappa^{-1}_{1,|\xi,\mu|}\big\{D^\alpha_{\xi}D^\beta_{x}D^j_\mu
\wt{a}(x,\xi;\mu)\big\}\kappa_{0,|\xi,\mu|}\|_{\scrL(E_0,E_1)}
\preceq |\xi|^{\nu-|\alpha|} |\xi,\mu|^{d-\nu-j}.
\end{equation} 
\end{definition}

Note that homogeneous functions in the present context are only defined for $\xi\not=0$ while, 
in case of strong parameter-dependence, they were defined whenever $(\xi,\mu)\not=0$. 
If $\chi(\xi)$ is an arbitrary $0$-excision function and if 
$\wt{a}\in \wt{S}^{(d,\nu)}(\rz^{n}\times\rpbar;E_0,E_1)_{\bfkappa,\bfkappa}$ 
then it is easy to verify that 
$\chi\wt{a}\in \wt{S}^{d,\nu}_{1,0}(\rz^{n}\times\rpbar;E_0,E_1)_{\bfkappa,\bfkappa}$.

\begin{definition}\label{def:kappa-cl}
	$\wt{S}^{d,\nu}(\rz^{n}\times\rpbar;E_0,E_1)_{\bfkappa,\bfkappa}$ denotes the space of all 
	$\wt{a}\in \wt{S}^{d,\nu}_{1,0}(\rz^{n}\times\rpbar;E_0,E_1)_{\bfkappa,\bfkappa}$ which admit a sequence 
	$\wt{a}^{(d-j,\nu-j)}\in S^{(d-j,\nu-j)}(\rz^{n}\times\rpbar;E_0,E_1)_{\bfkappa,\bfkappa}$, $j\in\nz_0$, such that 
	$$\wt{a}-\sum_{j=0}^{N-1}\chi(\xi) \wt{a}^{(d-j,\nu-j)}\;\in\; 
	   \wt{S}^{d-N,\nu-N}_{1,0}(\rz^{n}\times\rpbar;E_0,E_1)_{\bfkappa,\bfkappa}$$
	for every $N\in\nz_0$. 
\end{definition}

If $\wt{a}\in \wt{S}^{d,\nu}(\rz^{n}\times\rpbar;E_0,E_1)_{\bfkappa,\bfkappa}$ 
is as in the previous definition, then $\wt{a}^{(\nu,d)}$ is 
called the homogeneous principal symbol of $a$. It is well defined, since 
$$\wt{a}^{(d,\nu)}(x,\xi;\mu)=\lim_{\lambda\to+\infty}
  \lambda^{-d}\kappa_{1,\lambda}^{-1}\wt{a}(x,\lambda\xi;\lambda\mu)\kappa_{0,\lambda},
  \qquad \xi\not=0.$$
To this end note that if $\wt{r}$ belongs to $\wt{S}^{d-1,\nu-1}_{1,0}(\rz^{n}\times\rpbar;E_0,E_1)_{\bfkappa,\bfkappa}$ then 
we can estimate 
\begin{align*}
 \|\kappa_{1,\lambda}^{-1}\wt{r}(x,\lambda\xi;\lambda\mu) \kappa_{0,\lambda}\|
 &=
 \|\kappa_{1,\spk{\lambda\xi,\lambda\mu}/\lambda}
 \kappa_1^{-1}(\lambda\xi,\lambda\mu)\wt{r}(x,\lambda\xi;\lambda\mu) 
 \kappa_0(\lambda\xi,\lambda\mu) \kappa_{0,\spk{\lambda\xi,\lambda\mu}/\lambda}^{-1}\|\\
 &\preceq 
 \max\Big(\frac{\spk{\lambda\xi,\lambda\mu}}{\lambda},
 \frac{\lambda}{\spk{\lambda\xi,\lambda\mu}}\Big)^M
 \Big(\frac{\spk{\lambda\xi}}{\spk{\lambda\xi,\lambda\mu}}\Big)^\nu
 \spk{\lambda\xi}^{-1}\spk{\lambda\xi,\lambda\mu}^d
\end{align*}
with some $M\ge0$, cf. \eqref{eq:M-kappa}; 
for fixed $(\xi,\mu)$ with $\xi\not=0$, the right-hand side behaves like $\lambda^{d-1}$ for 
$\lambda\to+\infty$. 

\begin{remark}\label{rem:extension}
Let $\wt{a}\in\wt{S}^{(d,\nu)}(\rz^{n}\times\rpbar;E_0,E_1)_{\bfkappa,\bfkappa}$. In a small neighborhood of a point $(0,\mu_0)$ with $\mu_0>0$ we have that 
$\|\wt{a}(x,\xi,\mu)\|_{\scrL(E_0,E_1)}\preceq |\xi|^{\nu}$. Thus if $\nu>0$, 
it follows that 
 $$\lim_{\xi\to 0}\wt{a}(x,\xi;\mu)=0,\qquad \mu>0.$$
Hence, in this case, $\wt{a}$ extends to a $\kappa$-homogeneous function on $\rz^n\times V$ 
with $V=(\rz^n\times\rpbar)\setminus\{0\}$ by setting $a(x,0,\mu)=0$ for $\mu>0$. 
We shall implicitly identify $\wt{a}$  with its extension. 
\end{remark}

Let us also remark that
\begin{equation}\label{eq:inclusion} 
{S}^{d}(\rz^{n}\times\rpbar;E_0,E_1) \subseteq 
\wt{S}^{d,0}(\rz^{n}\times\rpbar;E_0,E_1)_{\bfkappa,\bfkappa}.
\end{equation} 

\subsection{Oscillatory integrals and calculus}\label{sec:oscillatory}

With a parameter-dependent operator valued symbol $a$ from one of the spaces introduced above, 
we associate the $\mu$-dependent family of continuous operators 
 $$\op(a)(\mu)=a(x,D;\mu):\scrS(\rz^n,E_0)\lra \scrS(\rz^n,E_1)$$
defined by  
 $$[\op(a)(\mu)u](x)=\int e^{ix\xi}a(x,\xi;\mu)\wh{u}(\xi)\,\dbar\xi,\qquad u\in\scrS(\rz^n,E_0).$$

The most efficient way to describe the calculus for such operator-families is based on the concept of 
oscillatory integrals in the spirit of \cite{Kuma}, but extended to Fr\`{e}chet space valued amplitude functions. 
For convenience of the reader and for applications in the sequel we give a short summary of 
this concept; for more details see \cite{SeilDiss}, \cite{GSS}. 

Let $E$ be a Fr\'echet space whose topology is described by a system of semi-norms $p_n$, $n\in\nz$. 
A smooth function $q(y,\eta):\rz^m\times\rz^m\to E$ is called an amplitude function with values in $E$, provided there exist sequences $(m_n)$ and $(\tau_n)$ such that 
 $$p_n\big(D^\alpha_\eta D^\beta_y q(y,\eta)\big)\preceq \spk{y}^{\tau_n}\spk{\eta}^{m_n}$$
for all $n$ and for all orders of derivatives. If $\chi(y,\eta)$ denotes a cut-off function with $\chi(0,0)=1$, the so-called oscillatory integral 
 $$\mathrm{Os}-\iint e^{-iy\eta}q(y,\eta)\,dy\dbar\eta:=
   \lim_{\eps\to0} \iint_{\rz^n\times\rz^n} e^{-iy\eta}\chi(\eps y,\eps\eta)q(y,\eta)\,dy\dbar\eta$$
exists and is independent on the choice of $\chi$. Note that for a continuous, $E$-valued function $f$ with compact support, $\iint f(y,\eta)\,dy d\eta$ is the unique element $e\in E$ such that 
$\spk{e^\prime,e}=\iint \spk{e^\prime,f(y,\eta)}\,dy d\eta$ for every functional $e^\prime\in E$. 
For simplicity of notation we shall simply write $\iint$ rather than $\mathrm{Os}-\iint$. 

\begin{example}\label{ex:Leibniz}
Given two parameter-dependent symbols $a_0$ and $a_1$, consider 
 $$q(y,\eta)=\big[(x,\xi)\mapsto a_1(x+y,\xi;\mu)a_0(x,\xi+\eta;\mu)\big],\qquad y,\eta\in\rz^n.$$ 
It is then straight-forward to verify the following$:$
\begin{itemize}
 \item[i$)$] If $a_j\in S^{d_j}_{1,0}(\rz^n\times\rpbar;E_j,E_{j+1})$, $j=0,1$, then 
  $q$ is an amplitude function with values in $S^{d_0+d_1}_{1,0}(\rz^n\times\rpbar;E_0,E_{2})$. 
 \item[ii$)$] If $a_j\in \wt{S}^{d_j,\nu_j}_{1,0}(\rz^n\times\rpbar;E_j,E_{j+1})_{\bfkappa,\bfkappa}$,   
  $j=0,1$, then $q$ is an amplitude function with values in $\wt{S}^{d_0+d_1,\nu_0+\nu_1}_{1,0}(\rz^n\times\rpbar;E_0,E_{2})_{\bfkappa,\bfkappa}$. 
\end{itemize}
Correspondingly, in each of these cases, we can define the so-called Leibniz-product of 
$a_0$ and $a_1$ by 
 $$(a_1\#a_0)(x,\xi;\mu)=\iint e^{-iy\eta}a_1(x+y,\xi;\mu)a_0(x,\xi+\eta;\mu)\,dy\dbar\eta.$$
The Leibniz-product corresponds to composition of operators, i.e., 
\begin{equation}\label{eq:Leibniz}
 \op(a_1\#a_0)(\mu)=\op(a_1)(\mu)\op(a_0)(\mu).
\end{equation}
\end{example} 
 
Another important operation for pseudo-differential operators is the so-called formal 
adjoint. 

\begin{definition}
We call $(E,H,\wt{E})$ a Hilbert-triple if the inner product of $H$ induces a non-degenerate sesquilinear pairing 
$E\times \wt{E}\to\cz$ that permits to identify the dual spaces of $E$ and $\wt{E}$ with $\wt{E}$ and $E$, 
respectively, and
 $$(\kappa_{\lambda}e,\wt{e})_H=(e,\wt{\kappa}_{\lambda}\wt{e})_H\qquad 
     \forall\;e,\,\wt{e},\,\lambda>0.$$
\end{definition}

\begin{example}
For every choice of $s,\delta,\gamma\in\rz$, both 
 $$(H^{s,\delta}(\rz_+),L^2(\rz_+),H^{-s,-\delta}_0(\rpbar)),\quad 
 (\calK^{s,\gamma}(\rz_+)^{\delta},L^2(\rz_+),\calK^{-s,-\gamma}(\rz_+)^{-\delta})$$ 
together with the standard group-action of Example $\ref{ex:standard-group-action}$ 
are Hilbert-triples. 
\end{example}

 If $(E_0,H_0,\wt{E}_0)$ and $(E_1,H_1,\wt{E}_1)$ are two Hilbert-triples and 
 $A:\scrS(\rz^n,E_0)\to\scrS(\rz^n,E_1)$, then denote by $A^*$ 
 the operator $\scrS(\rz^n,\wt{E}_1)\to\scrS(\rz^n,\wt{E}_0)$ defined by  
 $$(Au,v)_{L^2(\rz^n,H_1)}=(u,A^*v)_{L^2(\rz^n,H_0)},\qquad 
     u\in\scrS(\rz^n,E_0),\;v\in\scrS(\rz^n,\wt{E}_1).$$ 
This is the so-called formal adjoint of $A$. 

\begin{example}
Let $(E_0,H_0,\wt{E}_0)$ and $(E_1,H_1,\wt{E}_1)$ be two Hilbert-triples. 
	Given a symbol $a$, consider 
	$$q(y,\eta)=\big[(x,\xi)\mapsto a(x+y,\xi+\eta;\mu)^*\big],\qquad y,\eta\in\rz^n.$$ 
	It is then straight-forward to verify the following$:$
	\begin{itemize}
		\item[i$)$] If $a\in S^{d}_{1,0}(\rz^n\times\rpbar;E_0,E_{1})$ then 
		$q$ is an amplitude function with values in $S^{d}_{1,0}(\rz^n\times\rpbar;\wt{E}_1,\wt{E}_{0})$. 
		\item[ii$)$] If $a\in \wt{S}^{d,\nu}_{1,0}(\rz^n\times\rpbar;E_0,E_{1})_{\bfkappa,\bfkappa}$ then 
		$q$ is an amplitude function with values in 
		$\wt{S}^{d_0,\nu_0}_{1,0}(\rz^n\times\rpbar;\wt{E}_1,\wt{E}_{0})_{\bfkappa,\bfkappa}$. 
	\end{itemize}
	Correspondingly, in each of these cases, we can define the adjoint symbol of $a$ by 
	$$a^{(*)}(x,\xi;\mu)=\iint e^{-iy\eta}a(x+y,\xi+\eta;\mu)^*\,dy\dbar\eta.$$
The adjoint symbol corresponds to the formally adjoint operator, i.e.,   
\begin{equation}\label{eq:formal_adjoint}
 \op(a)(\mu)^*=\op(a^{(*)})(\mu).
\end{equation}
\end{example} 

To keep the exposition short we have focused on so-called left-symbols $a(x,\xi;\mu)$, 
only depending on the variable $x$. 
As in the standard theories of pseudodifferential operators, one can also 
introduce the corresponding classes of double-symbols $a(x,y,\xi;\mu)$ by substituting 
in the above definitions $x$ by $(x,y)$. The associated pseudodifferential operator is then given by
\begin{equation}\label{eq:double-symbol}
 \op(a)(\mu)u](x)=\iint e^{-iy\eta}a(x,x+y,\eta;\mu)u(x+y)\,dy\dbar\eta.
\end{equation}
Here, for fixed $\mu$, the integrand $a(x,y,\xi;\mu)u(x+y)$ can be viewed as an amplitude function 
with values in $\scrS(\rz^n,E_1)$. Actually, in this representation, one can also admit functions 
$u\in\scrC^\infty_b(\rz^n,E_0)$, since then the integrand is an amplitude function with values in 
$\scrC^\infty_b(\rz^n,E_1)$. Starting out from a double-symbol $a(x,y,\xi;\mu)$ there exist 
unique left- and right-symbols $a_L(x,\xi;\mu)$ and $a_R(y,\xi;\mu)$ such that 
 $$\op(a)(\mu)=\op(a_R)(\mu)=\op(a_L)(\mu).$$
There are explicit oscillatory-integral formulas for $a_L$ and $a_R$; 
for details we refer the reader to the literature.  
\section{Parameter-dependent symbols of finite regularity}\label{sec:finreg}

The following definition is motivated by Grubb's calculus of pseudodifferential 
operators with finite regularity. 

\begin{definition}\label{def:sdnu}
With $d,\nu\in\rz$ set
$$ {S}^{d,\nu}_{1,0}(\rz^{n}\times\rpbar;E_0,E_1):=  
 {S}^{d}_{1,0}(\rz^{n}\times\rpbar;E_0,E_1) 
 +\wt{S}^{d,\nu}_{1,0}(\rz^{n}\times\rpbar;E_0,E_1)_{\bfkappa,\bfkappa}.  
$$
Analogously define ${S}^{d,\nu}(\rz^{n}\times\rpbar;E_0,E_1)$. 
The corresponding spaces of $\kappa$-homogeneous symbols is
\begin{align*}
\begin{split}
{S}^{(d,\nu)}(\rz^{n}\times\rpbar;E_0,E_1):=&  
{S}^{(d)}(\rz^{n}\times\rpbar;E_0,E_1)+\\ 
&+\wt{S}^{(d,\nu)}(\rz^{n}\times\rpbar;E_0,E_1)_{\bfkappa,\bfkappa}.  
\end{split}
\end{align*} 
\end{definition}

All spaces in the previous definition are a non-direct sum of Fr\'{e}chet spaces, hence a Fr\`echet space itself. 
In general, we shall use notations like $a=a_0+\wt{a}$ if we represent an arbitrary symbol 
as the sum of two symbols. We shall say that such symbols have order $d$ and regularity $\nu$. 

\begin{remark}\label{rem:sdnue0e1}
	A symbol  
	$a\in {S}^{d,\nu}_{1,0}(\rz^{n}\times\rpbar;E_0,E_1)$ satisfies the estimates 
	\begin{align*}
	\|\kappa^{-1}_1(\xi,\mu)&\big\{D^\beta_{x}D^\alpha_{\xi}D^j_\mu
	a(x,\xi;\mu)\big\}\kappa_0(\xi,\mu)\|_{\scrL(E_0,E_1)}\\ 
	&\preceq \Big[\Big(\frac{\spk{\xi}}{\spk{\xi,\mu}}\Big)^{\nu-|\alpha|}+1\Big]\spk{\xi,\mu}^{d-|\alpha|-j}
	\end{align*}
	for every order of derivatives. Analogously for the space of $\kappa$-homogeneous functions
	where one needs to replace $\spk{\xi}$, $\spk{\xi,\mu}$ by $|\xi|$ and $|\xi,\mu|$, respectively. 
\end{remark}

Note that 
\begin{align*}
 {S}^{d,\nu}_{1,0}(\rz^{n}\times\rpbar;E_0,E_1)
  =\wt{S}^{d,\nu}_{1,0}(\rz^{n}\times\rpbar;E_0,E_1)_{\bfkappa,\bfkappa},\qquad \nu\le0
\end{align*}
and
\begin{align*}
\begin{split}
 {S}^{d,+\infty}_{1,0}(\rz^{n}\times\rpbar;E_0,E_1):=&
 \mathop{\mbox{\large$\cap$}}_{\nu\in\rz}{S}^{d,\nu}_{(\cl)}(\rz^{n}\times\rpbar;E_0,E_1)\\
 =& {S}^{d}_{1,0}(\rz^{n}\times\rpbar;E_0,E_1);
\end{split}
\end{align*}
in particular, infinite regularity corresponds to strong parameter-dependence. The corresponding fact is true for the classes of classical symbols. 

From now on, for convenience of notation, we shall drop $\rz^n\times\rpbar$ from notation and will use the short-hand notations 
\begin{align*} 
{S}^{d}_{1,0}(E_0,E_1)&:={S}^{d}_{1,0}(\rz^{n}\times\rpbar;E_0,E_1),\\
{S}^{d,\nu}_{1,0}(E_0,E_1)&:={S}^{d,\nu}_{1,0}(\rz^{n}\times\rpbar;E_0,E_1),\\ 
\wt{S}^{d,\nu}_{1,0}(E_0,E_1)_{\bfkappa,\bfkappa}
&:=\wt{S}^{d,\nu}_{1,0}(\rz^{n}\times\rpbar;E_0,E_1)_{\bfkappa,\bfkappa}\end{align*}
and similarly for the spaces of classical and $\kappa$-homogeneous functions.   


\subsection{Calculus: composition and formal adjoint} 

Putting together the results of Section \ref{sec:oscillatory} and the embedding \eqref{eq:inclusion}, 
the following theorems are easily verified$:$ 

\begin{theorem}\label{thm:comp02}
The Leibniz-product of symbols, respectively the composition of pseudodifferential operators, induces a bilinear continuous map 
	$$(a_1,a_0)\mapsto a_1\#a_0: 
	{S}^{d_1,\nu_1}_{1,0}(E_1,E_2)\times
	{S}^{d_0,\nu_0}_{1,0}(E_0,E_1)\lra 
	{S}^{d,\nu}_{1,0}(E_0,E_2)$$ 
	and analogously for classical symbols, where 
	$$d=d_0+d_1,\qquad \nu=\min(\nu_0,\nu_1,\nu_0+\nu_1).$$
\end{theorem}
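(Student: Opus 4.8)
The plan is to reduce the statement to the three already-established building blocks: the composition theorem for strongly parameter-dependent symbols from Section \ref{sec:oscillatory} (Example \ref{ex:Leibniz}(i)), the analogous statement for weakly parameter-dependent symbols (Example \ref{ex:Leibniz}(ii)), and the embedding \eqref{eq:inclusion}. Write $a_1 = a_{1,0} + \wt a_1$ and $a_0 = a_{0,0} + \wt a_0$ with $a_{j,0}\in S^{d_j}_{1,0}$ and $\wt a_j \in \wt S^{d_j,\nu_j}_{1,0}(\cdot)_{\bfkappa,\bfkappa}$. Since the Leibniz product is bilinear, $a_1\# a_0$ expands into four terms, and I would treat each separately, tracking its order and regularity, then add up.

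First I would handle the purely strong term $a_{1,0}\# a_{0,0}$: by Example \ref{ex:Leibniz}(i) this lies in $S^{d_0+d_1}_{1,0}(E_0,E_2)$, which by \eqref{eq:inclusion} embeds into $\wt S^{d_0+d_1,0}_{1,0}(\cdot)_{\bfkappa,\bfkappa}$ and hence into $S^{d_0+d_1,\nu}_{1,0}$ for any $\nu$ — in particular regularity is effectively $+\infty$, so it does not constrain the minimum. Next the purely weak term $\wt a_1\#\wt a_0$: by Example \ref{ex:Leibniz}(ii) it lies in $\wt S^{d_0+d_1,\nu_0+\nu_1}_{1,0}(\cdot)_{\bfkappa,\bfkappa}$, contributing regularity $\nu_0+\nu_1$. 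The two mixed terms are the crux: for $a_{1,0}\#\wt a_0$ I would use \eqref{eq:inclusion} to regard $a_{1,0}$ as an element of $\wt S^{d_1,0}_{1,0}(\cdot)_{\bfkappa,\bfkappa}$ and then apply Example \ref{ex:Leibniz}(ii), obtaining $\wt S^{d_0+d_1,\nu_0}_{1,0}(\cdot)_{\bfkappa,\bfkappa}$; symmetrically $\wt a_1\# a_{0,0}$ lands in $\wt S^{d_0+d_1,\nu_1}_{1,0}(\cdot)_{\bfkappa,\bfkappa}$. Summing the four contributions and taking the worst regularity gives $\nu=\min(\nu_0+\nu_1,\nu_0,\nu_1)$, and order $d=d_0+d_1$, as claimed. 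Continuity is inherited because each of the four maps above is continuous (the amplitude-function estimates in Example \ref{ex:Leibniz} and the inclusion \eqref{eq:inclusion} are continuous), and a finite sum of continuous bilinear maps into the (non-direct) Fréchet sum $S^{d,\nu}_{1,0}$ is continuous. Formula \eqref{eq:Leibniz} then transfers the statement from symbols to operators verbatim. For the classical case I would run the same decomposition, noting that the product of classical symbols is classical in each of the four sub-cases (this is part of the cited composition results) and that the homogeneous principal symbol behaves multiplicatively on the relevant components.

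The main obstacle I anticipate is not in any single step but in being careful about the bookkeeping of the mixed terms: one must make sure that applying \eqref{eq:inclusion} to view a strong symbol as weak of regularity $0$ is legitimate inside the Leibniz-product machinery — i.e. that the amplitude-function property in Example \ref{ex:Leibniz}(ii) genuinely applies once the factor is reinterpreted — and that the resulting regularity $\min(\nu_0,\nu_1,\nu_0+\nu_1)$ is sharp rather than merely an upper bound. A secondary technical point is verifying that the representation $a_j = a_{j,0}+\wt a_j$, though not unique, causes no trouble: the conclusion holds for every such decomposition and the target space is defined as the sum, so well-definedness of $a_1\#a_0$ as an element of $S^{d,\nu}_{1,0}(E_0,E_2)$ is automatic. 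I expect these to be routine once the four-term expansion is written out.
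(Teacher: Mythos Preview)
Your approach is correct and is exactly what the paper does: it states only that the theorem follows by ``putting together the results of Section \ref{sec:oscillatory} and the embedding \eqref{eq:inclusion}'', and your four-term expansion with Example \ref{ex:Leibniz}(i),(ii) is precisely that. One minor simplification: for the purely strong term $a_{1,0}\# a_{0,0}$ you do not need \eqref{eq:inclusion} at all, since $S^{d}_{1,0}$ is by definition a summand of $S^{d,\nu}_{1,0}$ for every $\nu$; and note that \eqref{eq:inclusion} as written concerns classical symbols, but the corresponding inclusion $S^{d}_{1,0}\subseteq \wt S^{d,0}_{1,0}(\cdot)_{\bfkappa,\bfkappa}$ is immediate from the symbol estimates (since $\spk{\xi}\le\spk{\xi,\mu}$), so your mixed-term argument is sound.
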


In case $a_j\in \wt{S}^{d_j,\nu_j}_{1,0}(E_j,E_{j+1})_{\bfkappa,\bfkappa}$ for at least one of the 
values $j=0$ or $j=1$, then $a_1\#a_0\in\wt{S}^{d,\nu}_{1,0}(E_0,E_2)_{\bfkappa,\bfkappa}$ $($analogously for classical symbols$)$. In this sense, the $\wt{S}$-class is a two-sided ideal. 

Using the explicit formula for the Leibniz-product and the standard technique of Taylor expansion, 
one sees that, for every $N\in\nz_0$, 
\begin{equation}\label{eq:comp_exp}
	r_N(a_1,a_0):=a_1\#a_0
	  -\sum_{|\alpha|=0}^{N-1}\frac{1}{\alpha!}(\partial_\xi^\alpha a_1)(D^\alpha_x a_0)
	  \;\in\; {S}^{d-N,\nu-N}_{1,0}(E_0,E_2)
\end{equation}
with continuous dependence on $a_0$ and $a_1$ $($analogously for classical symbols$)$. 

\begin{theorem}\label{thm:adj02}
Let $(E_0,H_0,\wt{E}_0)$ and $(E_1,H_1,\wt{E}_1)$ be two Hilbert-triples. 
Taking the formal adjoint induces a linear continuous map 
 $$a\mapsto a^{(*)}:  {S}^{d,\nu}_{1,0}(E_0,E_1)\lra {S}^{d,\nu}_{1,0}(\wt{E}_1,\wt{E}_0).$$ 
Moreover, for every $N\in\nz_0$ and with continuous dependence on $a$, 
	$$r_N^{(*)}(a):=a^{(*)}-\sum_{|\alpha|=0}^{N-1}\frac{1}{\alpha!}
	  \partial_\xi^\alpha D^\alpha_x a^*\;\in\;{S}^{d-N,\nu-N}_{1,0}(\wt{E}_1,\wt{E}_0).$$ 
The analogous statements hold true for classical symbols. 
\end{theorem}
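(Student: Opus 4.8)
The plan is to reduce Theorem~\ref{thm:adj02} to the known calculus for the two building-block classes $S^{d}_{1,0}$ and $\wt{S}^{d,\nu}_{1,0}(E_0,E_1)_{\bfkappa,\bfkappa}$, since the finite-regularity class is by definition their (non-direct) sum. Writing $a=a_0+\wt{a}$ with $a_0\in S^{d}_{1,0}(E_0,E_1)$ and $\wt{a}\in\wt{S}^{d,\nu}_{1,0}(E_0,E_1)_{\bfkappa,\bfkappa}$, linearity of the formal adjoint gives $\op(a)(\mu)^*=\op(a_0)(\mu)^*+\op(\wt{a})(\mu)^*$, and the representation $a^{(*)}(x,\xi;\mu)=\iint e^{-iy\eta}a(x+y,\xi+\eta;\mu)^*\,dy\dbar\eta$ from the last example in Section~\ref{sec:oscillatory} is likewise additive. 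So it suffices to show: (i) $a\mapsto a^{(*)}$ maps $S^{d}_{1,0}(E_0,E_1)$ continuously into $S^{d}_{1,0}(\wt{E}_1,\wt{E}_0)$, and (ii) it maps $\wt{S}^{d,\nu}_{1,0}(E_0,E_1)_{\bfkappa,\bfkappa}$ continuously into $\wt{S}^{d,\nu}_{1,0}(\wt{E}_1,\wt{E}_0)_{\bfkappa,\bfkappa}$. Part~(i) is the classical result for Schulze-type operator-valued symbols (the calculus is ``nearly identical with that of usual scalar-valued symbols'', cf.\ \cite{Kuma}), so I would only sketch it and point to the literature; the genuinely new point is~(ii).

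For~(ii) I would argue as in Example~\ref{ex:Leibniz}: set $q(y,\eta)=[(x,\xi)\mapsto a(x+y,\xi+\eta;\mu)^*]$. By the last example of Section~\ref{sec:oscillatory}, part~ii), $q$ is an amplitude function with values in $\wt{S}^{d,\nu}_{1,0}(\wt{E}_1,\wt{E}_0)_{\bfkappa,\bfkappa}$, with seminorm bounds on $D^\gamma_\eta D^\delta_y q$ of the form $\spk{y}^{\tau}\spk{\eta}^{m}$ depending only on finitely many symbol seminorms of $a$. Hence the oscillatory integral $a^{(*)}=\iint e^{-iy\eta}q(y,\eta)\,dy\dbar\eta$ converges in $\wt{S}^{d,\nu}_{1,0}(\wt{E}_1,\wt{E}_0)_{\bfkappa,\bfkappa}$, and the continuity of the oscillatory-integral functional on amplitude functions (again Section~\ref{sec:oscillatory}) gives continuous dependence of $a^{(*)}$ on $a$. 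The identification $\op(a)(\mu)^*=\op(a^{(*)})(\mu)$ is \eqref{eq:formal_adjoint}. The remainder estimate for $r_N^{(*)}(a)$ is obtained by Taylor-expanding $q(y,\eta)$ in $\eta$ at $\eta=0$ to order $N$: the Taylor polynomial integrates, via $\iint e^{-iy\eta}y^\alpha\eta^\beta\,dy\dbar\eta$, to the terms $\tfrac{1}{\alpha!}\partial_\xi^\alpha D^\alpha_x a^*$, while the integral form of the Taylor remainder is an amplitude function with values in $\wt{S}^{d-N,\nu-N}_{1,0}$ (each $\xi$-derivative lowers both order and regularity by one, as recorded in the Remark after Definition~\ref{def:weakly}), so $r_N^{(*)}(a)\in S^{d-N,\nu-N}_{1,0}(\wt{E}_1,\wt{E}_0)$ with continuous dependence. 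For the strong part $a_0$ one runs the same Taylor argument, landing in $S^{d-N}_{1,0}\subseteq S^{d-N,\nu-N}_{1,0}$.

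Finally, for classical symbols I would note that the adjoint of a $\kappa$-homogeneous symbol of degree $d-j$ is again $\kappa$-homogeneous of degree $d-j$ (one checks this directly from the definition of $\kappa$-homogeneity using the Hilbert-triple relation $(\kappa_\lambda e,\wt e)_H=(e,\wt\kappa_\lambda\wt e)_H$, which exchanges $\kappa_0,\kappa_1$ with $\wt\kappa_1,\wt\kappa_0$), and that it has the correct regularity-type homogeneity bound~\eqref{eq:symb02_hom}. Then, since $a-\sum_{j=0}^{N-1}\chi\,a^{(d-j,\nu-j)}$ lies in the lower class, applying $(\cdot)^{(*)}$ and using the non-classical result just proved shows $a^{(*)}$ has the asymptotic expansion $\sum_j\big(a^{(d-j,\nu-j)}\big)^{(*)}$, so $a^{(*)}$ is classical with principal symbol $\big(a^{(d,\nu)}\big)^{(*)}$.

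The main obstacle is the regularity bookkeeping in step~(ii): one must verify that the oscillatory-integral construction does not destroy the split estimate $\spk{\xi}^{\nu-|\alpha|}\spk{\xi,\mu}^{d-\nu-j}$ — in particular that the $\spk{\xi}$-localization built into $\wt{S}$ is compatible with the shift $\xi\mapsto\xi+\eta$ in the amplitude, and that the group-action conjugation $\kappa_1^{-1}(\xi,\mu)(\cdot)\kappa_0(\xi,\mu)$ behaves correctly under the transpose. This is exactly the content of part~ii) of the relevant example in Section~\ref{sec:oscillatory}, which I am entitled to invoke; the present proof is then essentially an assembly of that example, the oscillatory-integral formalism, and linearity, with the Taylor expansion supplying the remainder terms.
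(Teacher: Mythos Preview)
Your proposal is correct and is precisely the approach the paper takes: the paper does not give a separate proof but simply states (just before Theorem~\ref{thm:comp02}) that ``putting together the results of Section~\ref{sec:oscillatory} and the embedding~\eqref{eq:inclusion}, the following theorems are easily verified''. Your write-up is an explicit unpacking of exactly that sentence---splitting $a=a_0+\wt a$, invoking the adjoint example of Section~\ref{sec:oscillatory} for each summand, and obtaining the remainder by Taylor expansion---so there is nothing to correct.
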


\subsection{The homogeneous principal symbol} 

Given $a=a_0+\wt{a}\in {S}^{d,\nu}(E_0,E_1)$ we define 
\begin{align}\label{eq:principalsymb}
\begin{split}
 \sigma^{d,\nu}(a)(x,\xi;\mu)
 &=a_0^{(d)}(x,\xi;\mu)+\wt{a}^{(d,\nu)}(x,\xi;\mu)\\
 &=\lim_{\lambda\to+\infty}
   \lambda^{-d}\kappa_{1,\lambda}^{-1}a(x,\lambda\xi;\lambda\mu)\kappa_{0,\lambda},
   \qquad\xi\not=0.
\end{split}
\end{align}
$($recall that in case of positive regularity $\nu>0$, the principal symbol extends by continuity 
to all $(\xi,\mu)\not=0$, cf. Remark \ref{rem:extension}$)$.  
Due to \eqref{eq:comp_exp}, the principal symbol behaves multiplicatively under composition, 
 $$\sigma^{d,\nu}(a_1\#a_0)=\sigma^{d_1,\nu_1}(a_1)\sigma^{d_0,\nu_0}(a_0).$$

\begin{proposition}\label{prop:principal-map}
Let $\chi(\xi,\mu)$ and $\wt\chi(\xi)$ be two $0$-excision functions. Then  
 $$a_0^{(d)}+\wt{a}^{(d,\nu)}\mapsto \chi a_0^{(d)}+\wt{\chi}\,\wt{a}^{(d,\nu)}$$
induces a well-defined and surjective map 
 $${S}^{(d,\nu)}(E_0,E_1)\lra 
   {S}^{d,\nu}(E_0,E_1)\big/
   {S}^{d-\infty,\nu-\infty}(E_0,E_1).$$
\end{proposition}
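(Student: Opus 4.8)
The statement to prove is Proposition~\ref{prop:principal-map}: the map sending a homogeneous symbol $a_0^{(d)}+\wt a^{(d,\nu)}\in S^{(d,\nu)}(E_0,E_1)$ to the symbol $\chi a_0^{(d)}+\wt\chi\,\wt a^{(d,\nu)}$ is well-defined into $S^{d,\nu}(E_0,E_1)/S^{d-\infty,\nu-\infty}(E_0,E_1)$ and surjective. Let me think about what needs doing.

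First, well-definedness. There are two issues hidden here. One: does $\chi a_0^{(d)}+\wt\chi\,\wt a^{(d,\nu)}$ actually lie in $S^{d,\nu}$? This is already recorded in the excerpt — for the strong part, $\chi a_0^{(d)}\in S^d_{1,0}$ (stated right after the definition of $S^{(d)}$), and it is classical by construction; for the weak part, $\wt\chi\,\wt a^{(d,\nu)}\in\wt S^{d,\nu}_{1,0}$ (stated after Definition~\ref{def:kappa-hom}), again classical. So the sum lies in $S^{d,\nu}$. Two: the *value in the quotient* must be independent of the choice of excision functions $\chi,\wt\chi$ and — more subtly — independent of the decomposition $a_0^{(d)}+\wt a^{(d,\nu)}$ of a given element of $S^{(d,\nu)}(E_0,E_1)$, since that space is only a non-direct sum. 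For the excision functions: if $\chi,\chi'$ are two $0$-excision functions then $\chi-\chi'$ has compact support, so $(\chi-\chi')a_0^{(d)}\in S^{-\infty}$ (it is compactly supported and smooth, hence Schwartz in $(\xi,\mu)$); similarly $(\wt\chi-\wt\chi')\wt a^{(d,\nu)}$ — here I use Remark~\ref{rem:extension}/the estimate $\|\wt a^{(d,\nu)}\|\preceq|\xi|^\nu|\xi,\mu|^{d-\nu}$ near $\xi=0$ together with the support being contained in a region where $|\xi|$ is bounded, to see it lies in $\wt S^{d-\infty,\nu-\infty}$ when one checks derivatives. So changing excision functions changes the image by an element of $S^{-\infty}+\wt S^{d-\infty,\nu-\infty}=S^{d-\infty,\nu-\infty}$. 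For independence of the decomposition: suppose $a_0^{(d)}+\wt a^{(d,\nu)}=b_0^{(d)}+\wt b^{(d,\nu)}$ as functions on $\rz^n\times((\rz^n\setminus\{0\})\times\rpbar)$. Then $a_0^{(d)}-b_0^{(d)}=\wt b^{(d,\nu)}-\wt a^{(d,\nu)}$; the left side is a genuine $\kappa$-homogeneous symbol in $S^{(d)}$ (defined and satisfying its estimates on all of $(\rz^n\times\rpbar)\setminus\{0\}$, in particular near $\xi=0,\mu\neq0$), while the right side, being in $\wt S^{(d,\nu)}$, satisfies $\|\cdot\|\preceq|\xi|^\nu|\xi,\mu|^{d-\nu}$; matching these on a cone where $\mu\ge1$ and $|\xi|\to0$ shows, if $\nu>0$, that $a_0^{(d)}-b_0^{(d)}$ vanishes at $\xi=0$, hence extends the estimate appropriately — I expect this forces $a_0^{(d)}-b_0^{(d)}\in\wt S^{(d,\nu)}$ as well, so that $\chi(a_0^{(d)}-b_0^{(d)})$ and $\wt\chi(a_0^{(d)}-b_0^{(d)})$ differ by an $S^{-\infty}$ term, and the two images agree modulo $S^{d-\infty,\nu-\infty}$. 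The case $\nu\le0$ is easier since then $S^{d,\nu}_{1,0}=\wt S^{d,\nu}_{1,0}$ anyway and $S^{(d)}\subseteq$ something manageable.

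Second, surjectivity. Given $a\in S^{d,\nu}(E_0,E_1)$, by Definition~\ref{def:sdnu} write $a=a_0+\wt a$ with $a_0\in S^d(E_0,E_1)$ classical and $\wt a\in\wt S^{d,\nu}(E_0,E_1)_{\bfkappa,\bfkappa}$ classical. Each has a homogeneous principal symbol: $a_0^{(d)}\in S^{(d)}$ (the leading term of its poly-homogeneous expansion) and $\wt a^{(d,\nu)}\in\wt S^{(d,\nu)}$. Set $c:=a_0^{(d)}+\wt a^{(d,\nu)}\in S^{(d,\nu)}(E_0,E_1)$. Then by the definition of classical symbols, $a_0-\chi a_0^{(d)}\in S^{d-1}_{1,0}$ and $\wt a-\wt\chi\,\wt a^{(d,\nu)}\in\wt S^{d-1,\nu-1}_{1,0}$; hence $a-(\chi a_0^{(d)}+\wt\chi\,\wt a^{(d,\nu)})\in S^{d-1,\nu-1}_{1,0}$. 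This is \emph{not} yet $S^{-\infty}$; so the naive map is only surjective "modulo one order." To get genuine surjectivity onto the quotient by $S^{d-\infty,\nu-\infty}$ I will need to iterate: the remainder $a^{(1)}:=a-(\chi a_0^{(d)}+\wt\chi\,\wt a^{(d,\nu)})\in S^{d-1,\nu-1}$ is again classical, has a principal symbol $c^{(1)}\in S^{(d-1,\nu-1)}$, etc. But wait — the target of the proposition is only $S^{d,\nu}/S^{d-\infty,\nu-\infty}$, and the *source* is $S^{(d,\nu)}$, a single homogeneous degree. So surjectivity means: every class in the quotient is hit. Given $a$, I produce $c\in S^{(d,\nu)}$ with image $[a]$? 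That cannot be right unless $a$ differs from $\chi a_0^{(d)}+\wt\chi\,\wt a^{(d,\nu)}$ by something in $S^{d-\infty,\nu-\infty}$, which is false in general. Re-reading: the map lands in the quotient, and I claim it is surjective — so I only need that the image is \emph{all} of the quotient, i.e. for every $[a]$ there is a homogeneous $c$ with $\chi c_0+\wt\chi\,\wt c\equiv a$ mod $S^{d-\infty,\nu-\infty}$. Hmm, that still seems to demand $a$ be "essentially homogeneous." I think the resolution is that the quotient in the target should be read with the asymptotic-expansion structure in mind: actually no. Let me reconsider — I believe the intended reading is that two classical symbols with the same principal symbol differ by $S^{d-1,\nu-1}$, NOT $S^{-\infty}$, so surjectivity of THIS map onto the full quotient would be false; therefore the target quotient must implicitly be $S^{d,\nu}/S^{d-1,\nu-1}$ or the map is declared surjective in the sense that every homogeneous symbol arises as a principal symbol. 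I will treat surjectivity as: every $c\in S^{(d,\nu)}$ is the principal symbol of some classical $a$, namely $a=\chi c_0+\wt\chi\,\wt c$ itself — which is immediate once well-definedness is established and one checks this $a$ is classical with $\sigma^{d,\nu}(a)=c$. So the content of surjectivity is exactly the (easy) statement that $\chi c_0+\wt\chi\,\wt c$ is a classical symbol of order $d$, regularity $\nu$, whose principal symbol (via the limit formula \eqref{eq:principalsymb}) recovers $c$; this uses that $\chi c_0-c_0\in S^{-\infty}$ off a neighborhood of $0$ contributes nothing to the $\lambda\to\infty$ limit, and likewise for the weak part, plus the limit computations already displayed in the text for each summand.

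\textbf{Main obstacle.} The genuinely delicate point is the independence of the image (in the quotient) from the non-direct decomposition $a_0^{(d)}+\wt a^{(d,\nu)}$ — equivalently, identifying $S^{(d)}\cap\big(S^{(d)}+\wt S^{(d,\nu)}\big)$-type overlaps and showing that any homogeneous function which is simultaneously "strongly homogeneous" and controlled by $|\xi|^\nu|\xi,\mu|^{d-\nu}$ lies in $\wt S^{(d,\nu)}$, so that multiplying by $\chi$ versus $\wt\chi$ only costs an $S^{-\infty}$ term. This is a matching-of-asymptotics argument on the cone, branching on the sign of $\nu$, and it is where I would spend most of the effort; everything else is bookkeeping with the estimates already recorded in Section~\ref{sec:finreg} and the limit formulas for principal symbols.
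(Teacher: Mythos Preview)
Your diagnosis of surjectivity is correct: as written, with quotient by $S^{d-\infty,\nu-\infty}$, the map cannot be surjective, since a single homogeneous term cannot represent an arbitrary classical symbol modulo the regularizing class. The Corollary immediately following the Proposition records the short exact sequence with kernel $S^{d-1,\nu-1}$, and the paper's own proof addresses only well-definedness; so the intended quotient is $S^{d-1,\nu-1}$, in which case surjectivity is trivial (any $[a]$ is hit by its own principal part $a_0^{(d)}+\wt a^{(d,\nu)}$). You read the situation correctly.

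On well-definedness there is a genuine gap in your argument. You set things up exactly as the paper does (by linearity, it suffices to treat the case $a_0^{(d)}+\wt a^{(d,\nu)}=0$; your $c=a_0^{(d)}-b_0^{(d)}$ is the same object): one has $c\in S^{(d)}$ and $c\in\wt S^{(d,\nu)}$ simultaneously, and the difference of images is $(\chi-\wt\chi)c$, supported (after discarding a compactly supported piece) in a strip of the form $\Sigma=\{|\xi,\mu|\ge R',\,|\xi|\le R\}$. But your stated reasoning, ``$c\in\wt S^{(d,\nu)}$, so $\chi c$ and $\wt\chi c$ differ by an $S^{-\infty}$ term'', does not go through: for a generic element of $\wt S^{(d,\nu)}$ the $\xi$-derivatives scale like $|\xi|^{\nu-|\alpha|}$, which blows up at $\xi=0$ once $|\alpha|>\nu$, so membership in $\wt S^{(d,\nu)}$ alone gives nothing on $\Sigma$. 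What saves the day is the \emph{other} membership $c\in S^{(d)}$, which makes $c$ smooth across $\xi=0$ for $\mu>0$. The paper combines the two as follows (for $\nu>0$; the case $\nu\le0$ is immediate): the $\wt S^{(0,\nu)}$ estimate forces $D^\alpha_\xi c(x,0;\mu)=0$ for all $|\alpha|\le N$, $N$ the largest integer $<\nu$; then Taylor expand the smooth function $c$ in $\xi$ to order $N+1$,
\[
c(x,\xi;\mu)=\sum_{|\alpha|=N+1}\frac{N+1}{\alpha!}\,\xi^\alpha\int_0^1(1-t)^N(\partial^\alpha_\xi c)(x,t\xi;\mu)\,dt,
\]
and estimate the integrand on $\Sigma$ using that $|t\xi,\mu|\sim|\xi,\mu|\sim\spk{\mu}$ uniformly for $0\le t\le1$ there. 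This gives $(\chi-\wt\chi)c\in\wt S^{-\infty,\nu-\infty}$, which is the correct regularizing class (not $S^{-\infty}$). Your ``matching-of-asymptotics'' instinct is pointed at the right place, but the Taylor expansion is the actual mechanism, and your sketch does not identify it.
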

\begin{proof}
After multiplication with $|\xi,\mu|^{-d}$ we may assume w.l.o.g. that $d=0$.
Then it suffices to show that 
$\chi a_0^{(0)}+\wt{\chi}\,\wt{a}^{(0,\nu)}\in \wt{S}^{-\infty,\nu-\infty}(E_0,E_1)_{\bfkappa,\bfkappa}$ 
provided $a_0^{(0)}+\wt{a}^{(0,\nu)}=0$. Since changing the cut-off functions results in a smoothing remainder, we may assume that $\chi=0$ on $|\xi,\mu|\le1$ and 
$\chi=1$ on $|\xi,\mu|\ge2$ as well as $\wt\chi=0$ on $|\xi|\le1/2$ and 
$\wt\chi=1$ on $|\xi|\ge1$. 

Let $\chi_1(\xi,\mu)$ be another $0$-excision function, vanishing for $|\xi,\mu|\le2$ and being $1$ for $|\xi,\mu|\ge3$. 
Since $\wt{a}^{(0,\nu)}=-a_0^{(0)}$ we have 
 $$\chi a_0^{(0)}+\wt{\chi}\,\wt{a}^{(0,\nu)}=(\chi-\wt\chi)a_0^{(0)}
   =(1-\chi_1)(\chi-\wt\chi)a_0^{(0)}+\chi_1(\chi-\wt\chi)a_0^{(0)}.$$
The first term on the right-hand side is a smooth compactly supported function, hence is 
regularizing. The second term is supported in the strip-like region 
$\Sigma:=\{(\xi,\mu)\mid |\xi,\mu|\ge 2,\;|\xi|\le1\}$. 

In case $\nu\le0$ it is clear that the second term is regularizing. So we assume $\nu>0$. 

Let $N$ be the largest integer strictly smaller than $\nu$. Using the symbol estimates in 
$\wt{S}^{(0,\nu)}(E_0,E_1)_{\bfkappa,\bfkappa}$ and \eqref{eq:M-kappa} we find that  
 $$\|D^\alpha_\xi a_0^{(0)}(x,\xi;\mu)\|_{\scrL(E_0,E_1)}\preceq 
    |\xi|^{\nu-|\alpha|}|\xi,\mu|^{-\nu}\max\{|\xi,\mu|,|\xi,\mu|^{-1}\}^M,\qquad 
   \xi\not=0,$$
with a suitable $M\ge 0$. Thus these derivatives vanish for $\xi=0$ and $\mu>0$ provided $|\alpha|\le N$.
 By Taylor expansion we conclude 
 $$a_0^{(0)}(x,\xi;\mu)=\sum_{|\alpha|=N+1}\frac{N+1}{\alpha!}\xi^\alpha
   \int_0^1(1-t)^N(\partial^\alpha_\xi a_0^{(0)})(x,t\xi;\mu)\,dt.$$
Now observe that 
 $$\frac{1}{2}\le\frac{\mu}{\spk{\mu}}\le\frac{\mu}{|\xi,\mu|}
   \le\frac{|t\xi,\mu|}{|\xi,\mu|}\le1,\qquad\forall\;(\xi,\mu)\in\Sigma,\;0\le t\le1.$$
Hence 
 $$\|\kappa^{\pm1}_{\ell,|t\xi,\mu|/|\xi,\mu|}\|_{\scrL(E_\ell)}\preceq 1,
   \qquad\forall\;(\xi,\mu)\in\Sigma,\;0\le t\le1,$$ 
and we can estimate, for $(\xi,\mu)\in\Sigma$, 
\begin{align*}
 \|\kappa^{-1}_1(\xi,\mu)&\big\{D^\beta_{x}D^\alpha_{\xi}D^j_\mu
 a_0^{(0)}(x,\xi;\mu)\big\}\kappa_0(\xi,\mu)\|_{\scrL(E_0,E_1)}\\
 &\preceq \spk{\xi}^{-L}\int_0^1 |t\xi,\mu|^{-(N+1)-j}\,dt 
  \preceq \spk{\xi}^{-L}\spk{\mu}^{-\nu-j}
\end{align*}
with arbitrary $L\ge 0$ $($recall that $|\xi|\le1$ in $\Sigma)$. 
It follows that $\chi_1(\chi-\wt\chi)a_0^{(0)}$ belongs to 
$\wt{S}^{-\infty,\nu-\infty}(E_0,E_1)_{\bfkappa,\bfkappa}$. This completes the proof. 
\end{proof}

\begin{corollary}
Let $a\in {S}^{d,\nu}(E_0,E_1)$ with $\sigma^{d,\nu}(a)=0$. 
Then $a\in {S}^{d-1,\nu-1}(E_0,E_1)$. 
In other words, we have the short exact sequence 
 $$0\lra {S}^{d-1,\nu-1}(E_0,E_1)\hookrightarrow 
   {S}^{d,\nu}(E_0,E_1)\xrightarrow{\sigma^{d,\nu}}
   {S}^{(d,\nu)}(E_0,E_1)\lra0.$$
\end{corollary}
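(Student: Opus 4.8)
The plan is to derive the corollary directly from the exact sequence assertion by combining it with Proposition~\ref{prop:principal-map}. Writing $a=a_0+\wt a$ with $a_0\in S^d(E_0,E_1)$ and $\wt a\in\wt S^{d,\nu}(E_0,E_1)_{\bfkappa,\bfkappa}$, the hypothesis $\sigma^{d,\nu}(a)=0$ says precisely that the homogeneous principal symbol $a_0^{(d)}+\wt a^{(d,\nu)}\in S^{(d,\nu)}(E_0,E_1)$ vanishes. By Proposition~\ref{prop:principal-map}, with any choice of $0$-excision functions $\chi(\xi,\mu)$ and $\wt\chi(\xi)$, the element $\chi a_0^{(d)}+\wt\chi\,\wt a^{(d,\nu)}$ then lies in $S^{d-\infty,\nu-\infty}(E_0,E_1)$, hence in particular in $S^{d-1,\nu-1}(E_0,E_1)$.

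Next I would write
\begin{align*}
 a&=\big(a_0-\chi a_0^{(d)}\big)+\big(\wt a-\wt\chi\,\wt a^{(d,\nu)}\big)
   +\big(\chi a_0^{(d)}+\wt\chi\,\wt a^{(d,\nu)}\big).
\end{align*}
By the definition of classical symbols ($S^d$, resp. $\wt S^{d,\nu}$) applied with $N=1$, the first summand belongs to $S^{d-1}_{1,0}(E_0,E_1)\subseteq S^{d-1,\nu-1}_{1,0}(E_0,E_1)$ and the second to $\wt S^{d-1,\nu-1}_{1,0}(E_0,E_1)_{\bfkappa,\bfkappa}\subseteq S^{d-1,\nu-1}_{1,0}(E_0,E_1)$; moreover each retains its asymptotic expansion shifted by one, so in fact they are classical of order $d-1$ and regularity $\nu-1$. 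The third summand was just shown to be regularizing, a fortiori classical of order $d-1$ and regularity $\nu-1$. Adding the three pieces gives $a\in S^{d-1,\nu-1}(E_0,E_1)$, which is the claim.

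It then remains to check exactness of the displayed sequence. Injectivity of the inclusion $S^{d-1,\nu-1}\hookrightarrow S^{d,\nu}$ is trivial (note $S^{d-1}\subseteq S^d$ and $\wt S^{d-1,\nu-1}\subseteq\wt S^{d,\nu}$, so the inclusion makes sense). Surjectivity of $\sigma^{d,\nu}$ onto $S^{(d,\nu)}(E_0,E_1)$ is immediate from the formula \eqref{eq:principalsymb}: given $a_0^{(d)}+\wt a^{(d,\nu)}\in S^{(d,\nu)}$, the symbol $\chi a_0^{(d)}+\wt\chi\,\wt a^{(d,\nu)}$ is a classical symbol in $S^{d,\nu}(E_0,E_1)$ with that principal symbol, since $\chi a_0^{(d)}\in S^d_{1,0}$ and $\wt\chi\,\wt a^{(d,\nu)}\in\wt S^{d,\nu}_{1,0}$ and the remainders in the respective asymptotic expansions have lower order and regularity. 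Finally, $\ker\sigma^{d,\nu}=S^{d-1,\nu-1}$: the inclusion ``$\supseteq$'' holds because a symbol of order $d-1$ and regularity $\nu-1$ satisfies $\lambda^{-d}\kappa_{1,\lambda}^{-1}a(x,\lambda\xi;\lambda\mu)\kappa_{0,\lambda}\to0$ as $\lambda\to+\infty$ by the estimate in Remark~\ref{rem:sdnue0e1}, and ``$\subseteq$'' is exactly the first part of the corollary.

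The only genuinely non-routine point is the well-definedness issue already handled by Proposition~\ref{prop:principal-map}, namely that splitting the principal symbol into its ``strong'' and ``weak'' pieces is not unique, so one must know that different splittings differ only by a symbol that becomes regularizing after excision; this is precisely why the proof above routes through that proposition rather than arguing componentwise. Everything else reduces to unwinding the definitions of the classical symbol classes and the principal-symbol limit, together with the elementary inclusions $S^{d-1}\subseteq S^d$ and $\wt S^{d-1,\nu-1}\subseteq\wt S^{d,\nu}$.
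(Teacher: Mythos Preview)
Your argument is correct and is exactly the route the paper intends: the corollary is stated without proof immediately after Proposition~\ref{prop:principal-map} precisely because that proposition (its well-definedness assertion) provides the non-trivial step, and you have unwound this correctly. The decomposition $a=(a_0-\chi a_0^{(d)})+(\wt a-\wt\chi\,\wt a^{(d,\nu)})+(\chi a_0^{(d)}+\wt\chi\,\wt a^{(d,\nu)})$ together with the classical expansions and Proposition~\ref{prop:principal-map} is the natural and complete argument.
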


\subsection{Ellipticity and parametrix construction} 

In the present set-up, one can introduce a concept of ellipticity only in case $\nu>0$. Again, after multiplication by  
$[\xi,\mu]^{-d}$, we can focus on the case of symbols of order $d=0$. 

Below we shall frequently use the following fact$:$ If $u:\Omega_y\to\scrL(E_0,E_1)$ is a smooth function which is pointwise invertible on $\Omega\subset\rz^m$, then $D^\alpha_y u(y)^{-1}$ is a linear combination of terms of the form 
\begin{equation}\label{eq:deriv_inv}
[u^{-1}D^{\alpha_1}_y u]\cdots [u^{-1}D^{\alpha_\ell}_y u] u^{-1},
\qquad \ell\ge 1,\quad \alpha_1+\ldots+\alpha_\ell=\alpha. 
\end{equation} 

\begin{lemma}\label{lem:inv01}
	Let $\nu>0$ and 
	$\wt{a}\in \wt{S}^{(0,\nu)}(E_0,E_1)_{\bfkappa,\bfkappa}$. Let 
	$\chi\in\scrC^\infty(\sz^{n}_+)$ with $0\le\chi\le1$ and define 
	$$\wt{a}_\chi(x,\xi;\mu)
	=\chi\Big(\frac{(\xi,\mu)}{|\xi,\mu|}\Big)
	\wt{a}(x,\xi;\mu).$$
	Then there exists a neighborhood $K\subset\sz^{n}_+$ of the north-pole $(0,1)$ such that 
	if $\chi$ is supported in $K$ then $(1+a_\chi )(x,\xi;\mu)$ is invertible whenever $(\xi,\mu)\not=0$. Moreover,  
	there exists a $\wt{b}\in \wt{S}^{(0,\nu)}(E_0,E_1)_{\bfkappa,\bfkappa}$
	such that 
	$$(1+a_\chi)^{-1}=1+\wt b.$$ 
\end{lemma}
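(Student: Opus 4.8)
The plan is to exploit that $\wt a_\chi$ is supported, after applying the $\kappa$-homogeneity scaling, only near the north-pole, where the group-action rescaling factors in \eqref{eq:M-kappa} become harmless. First I would observe that it suffices to prove invertibility and the asymptotics on the unit "sphere" $|\xi,\mu|=1$ and then extend by $\kappa$-homogeneity: setting $\wt b(x,\xi;\mu)=\kappa_{0,|\xi,\mu|}\,\wt b(x,\xi/|\xi,\mu|;\mu/|\xi,\mu|)\,\kappa_{0,|\xi,\mu|}^{-1}$ (using $E_0=E_1$ after an obvious identification — strictly $\wt b\in\wt S^{(0,\nu)}(E_0,E_0)$) produces a $\kappa$-homogeneous function of degree $0$, and the defining relation $(1+a_\chi)^{-1}=1+\wt b$ is $\kappa$-homogeneous of degree $0$ as well, so it propagates from the sphere to all of $V=(\rz^n\times\rpbar)\setminus\{0\}$. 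For the pointwise invertibility near the north-pole, I would note that on a small spherical neighborhood $K$ of $(0,1)$ one has $|\xi|\le\eps|\xi,\mu|$, and combining Remark \ref{rem:extension} (which gives $\|\wt a(x,\xi;\mu)\|_{\scrL(E_0,E_1)}\preceq|\xi|^\nu|\xi,\mu|^{-\nu}$ near such points after undoing the homogeneity, hence $\|\kappa_{1,|\xi,\mu|}^{-1}\wt a\,\kappa_{0,|\xi,\mu|}\|\preceq (|\xi|/|\xi,\mu|)^\nu\le\eps^\nu$) with the uniform bound $\|\kappa_{\ell,|\xi,\mu|}^{\pm1}\|\preceq 1$ valid on $\Sigma$-type strips, one gets $\|\wt a_\chi(x,\xi;\mu)\|_{\scrL(E_0,E_1)}\le\tfrac12$ for $\chi$ supported in $K$ with $\eps$ small enough; then $1+\wt a_\chi$ is invertible by a Neumann series and $\wt b:=(1+\wt a_\chi)^{-1}-1=-\wt a_\chi(1+\wt a_\chi)^{-1}$.

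Next I would check the symbol estimates \eqref{eq:symb02_hom} for $\wt b$, i.e. that $\wt b\in\wt S^{(0,\nu)}(E_0,E_1)_{\bfkappa,\bfkappa}$. For this I use the formula \eqref{eq:deriv_inv}: any derivative $D^\beta_x D^\alpha_\xi D^j_\mu (1+\wt a_\chi)^{-1}$ is a linear combination of terms
$$
[(1+\wt a_\chi)^{-1}D^{\gamma_1}(1+\wt a_\chi)]\cdots[(1+\wt a_\chi)^{-1}D^{\gamma_\ell}(1+\wt a_\chi)](1+\wt a_\chi)^{-1}
$$
with $\gamma_1+\cdots+\gamma_\ell=(\beta,\alpha,j)$ (multi-index in all variables) and each $\gamma_i\ne0$. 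Conjugating by $\kappa_{1,|\xi,\mu|}^{-1}(\cdot)\kappa_{0,|\xi,\mu|}$ and inserting $\kappa_{0,|\xi,\mu|}\kappa_{0,|\xi,\mu|}^{-1}=1$ (resp. with $\kappa_1$) between consecutive factors turns this into a product of conjugated factors of the type $\kappa_{0,|\xi,\mu|}^{-1}(1+\wt a_\chi)^{-1}\kappa_{0,|\xi,\mu|}$ and $\kappa_{0,|\xi,\mu|}^{-1}D^{\gamma_i}(1+\wt a_\chi)\kappa_{0,|\xi,\mu|}$ (using $E_0=E_1$; in general one alternates $\kappa_0,\kappa_1$ consistently). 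The conjugated inverse is bounded uniformly on the support of $\chi$ because $\|\kappa_{0,|\xi,\mu|}^{-1}(1+\wt a_\chi)^{-1}\kappa_{0,|\xi,\mu|}-1\|=\|\kappa_{0,|\xi,\mu|}^{-1}\wt b\,\kappa_{0,|\xi,\mu|}\|$ can be expanded into a Neumann series in $\kappa_{1,|\xi,\mu|}^{-1}\wt a_\chi\kappa_{0,|\xi,\mu|}$ which is $\le\tfrac12$ in norm (again using $\|\kappa^{\pm1}_{\ell,|\xi,\mu|}\|\preceq1$ on the support); and each conjugated derivative of $1+\wt a_\chi$ obeys the estimate \eqref{eq:symb02_hom} with $d=0$ since $\wt a\in\wt S^{(0,\nu)}$ and $\chi((\xi,\mu)/|\xi,\mu|)$ is $\kappa$-homogeneous of degree $0$ with bounded conjugated derivatives (the argument lives on the compact sphere). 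Multiplying these estimates and observing $\nu-|\alpha_1|+\cdots$ telescopes to $\nu$ only when $\ell=1$ while for $\ell\ge2$ one picks up extra decay $|\xi|^{-(\ell-1)\nu}\le|\xi|^{\nu-|\alpha|}$ up to the bounded factor near the north pole — more carefully, the exponent of $|\xi|$ in the product is $\ell\nu-|\alpha|$ and since $|\xi|\preceq|\xi,\mu|$ near $(0,1)$ one bounds $|\xi|^{(\ell-1)\nu}\preceq|\xi,\mu|^{(\ell-1)\nu}\cdot(|\xi|/|\xi,\mu|)^{(\ell-1)\nu}\le|\xi,\mu|^{(\ell-1)\nu}$, and the $|\xi,\mu|$-exponents recombine to $-\ell\nu+(\ell-1)\nu=-\nu$ as required. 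This yields exactly the bound $\preceq|\xi|^{\nu-|\alpha|}|\xi,\mu|^{-\nu-j}$.

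The main obstacle I anticipate is the bookkeeping in this last step: making the telescoping of the $|\xi|$- and $|\xi,\mu|$-exponents in the product \eqref{eq:deriv_inv} come out to the claimed $\wt S^{(0,\nu)}$-bound, and in particular verifying that the "bad" case $\ell\ge2$ is absorbed using $|\xi|\le\eps|\xi,\mu|$ on the (conical) support of $\chi$. A secondary technical point is keeping the group-action conjugations consistent when $E_0\ne E_1$: one should either assume the natural identification that makes $1+\wt a_\chi$ an endomorphism (so that its inverse makes sense), or read the statement as written with $E_0=E_1$ implicitly; the bounds $\|\kappa_{\ell,|\xi,\mu|}^{\pm1}\|\preceq1$ restricted to $\operatorname{supp}\chi$, which are the true source of all uniformity, hold in either case because on that set $|\xi,\mu|$ and $\mu$ are comparable. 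Everything else — the Neumann series, the $\kappa$-homogeneous extension from the sphere, and the choice of $K$ — is routine once these estimates are in place.
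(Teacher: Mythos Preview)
Your argument is correct, and the exponent bookkeeping in the last step does work: the product over $\ell$ factors gives $|\xi|^{\ell\nu-|\alpha|}|\xi,\mu|^{-\ell\nu-j}$, and since $|\xi|\le|\xi,\mu|$ and $(\ell-1)\nu\ge 0$ this is $\le |\xi|^{\nu-|\alpha|}|\xi,\mu|^{-\nu-j}$, exactly as needed. The separate treatment of the zeroth-order term via $\wt b=-\wt a_\chi(1+\wt a_\chi)^{-1}$ is also fine.

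The paper takes a shorter route that avoids this bookkeeping entirely. It first observes (via the same formula \eqref{eq:deriv_inv}, but with a much cruder estimate) that $(1+a_\chi)^{-1}\in\wt S^{(0,0)}$, i.e.\ regularity $0$ rather than $\nu$; this only requires boundedness of the conjugated inverse and of each conjugated factor, with no careful counting of $|\xi|$-powers. Then it uses the algebraic identity
\[
(1+a_\chi)^{-1}=1-a_\chi+a_\chi(1+a_\chi)^{-1}a_\chi,
\]
so that $\wt b=-a_\chi+a_\chi(1+a_\chi)^{-1}a_\chi$. Since pointwise multiplication adds regularities, the triple product has regularity $\nu+0+\nu=2\nu\ge\nu$, and $-a_\chi$ already has regularity $\nu$. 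This ``sandwich'' trick converts the easy regularity-$0$ statement into the desired regularity-$\nu$ statement in one line, without tracking exponents through \eqref{eq:deriv_inv}. Your direct approach and the paper's trick are logically equivalent; the paper's is simply less laborious.

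Two minor cosmetic points: the reference to ``$\Sigma$-type strips'' and the bound $\|\kappa_{\ell,|\xi,\mu|}^{\pm1}\|\preceq1$ are not needed here --- on the sphere $|\xi,\mu|=1$ the group-action is the identity, so $\|\wt a\|\preceq|\xi|^\nu$ follows directly from the symbol estimate. And you need not \emph{define} $\wt b$ by homogeneous extension: once $1+a_\chi$ is $\kappa$-homogeneous of degree $0$ and invertible on the sphere, its inverse is automatically $\kappa$-homogeneous of degree $0$ everywhere.
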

\begin{proof}
	Note that $a_\chi\in \wt{S}^{(0,\nu)}(E_0,E_1)_{\bfkappa,\bfkappa}$. 
	Choose $K$ in such a way that $\|\wt{a}(x,\xi;\mu)\|\preceq |\xi|^\nu\le 1/2$ 
	for all $x$ and all $(\xi,\mu)\in K$. Hence $1+a_\chi$ is invertible on 
	$\rz^{n}\times\sz^{n}$ and $(1+a_\chi)^{-1}$ is uniformly bounded. 
	By $\kappa$-homogeneity it follows that $(1+a_\chi)^{-1}$ exists everywhere with 
	$$\|\kappa^{-1}_{0,|\xi,\mu|}
	(1+a_\chi(x,\xi;\mu))^{-1}
	\kappa_{1,|\xi,\mu|}\|_{\scrL(E_1,E_0)} \preceq 1$$
	whenever $(\xi,\mu)\not=0$. From \eqref{eq:deriv_inv} it follows that 
	$(1+a_\chi)^{-1}\in\wt{S}^{(0,0)}(E_0,E_1)_{\bfkappa,\bfkappa}$. But then 
	$$(1+a_\chi)^{-1}=1-a_\chi+a_\chi(1+a_\chi)^{-1}a_\chi=:1+\wt{b}$$
	gives the desired result.  
\end{proof}

\begin{proposition}\label{prop:inverse_principal}
	Let $\nu>0$ and $a\in{S}^{(0,\nu)}(E_0,E_1)$. 
	Assume that $a(x,\xi;\mu)$ is invertible whenever $(\xi,\mu)\not=0$ and that 
	$$\|a(x,\xi;\mu)^{-1}\|_{\scrL(E_1,E_0)} \preceq 1
	  \qquad \forall\;x\in\rz^n\quad\forall\;|\xi,\mu|=1. $$
	Then $a^{-1}\in{S}^{(0,\nu)}(E_1,E_0)$. 
\end{proposition}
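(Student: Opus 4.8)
The plan is to reduce the claim about the full symbol $a=a_0+\wt{a}\in S^{(0,\nu)}(E_0,E_1)$ to the purely twisted-homogeneous situation already handled in Lemma \ref{lem:inv01}, by localizing on the unit sphere $\sz^n_+$. First I would observe that, since $\nu>0$, the summand $\wt{a}$ vanishes on $\{\xi=0\}$ (cf. Remark \ref{rem:extension}), so near the north pole $(0,1)\in\sz^n_+$ the symbol $a$ agrees up to a small perturbation with the strongly homogeneous part $a_0^{(0)}$, while away from $\xi=0$ the weight factors $\spk{\xi}$ and $\spk{\xi,\mu}$ are comparable and $a$ behaves like an ordinary Schulze-type homogeneous symbol of degree $0$. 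The key point is therefore a covering argument: choose a finite partition of unity $\{\chi_k\}$ on $\sz^n_+$, subordinate to a cover by sets each of which is either (i) a small neighborhood $K$ of the north pole as in Lemma \ref{lem:inv01}, or (ii) a set on which $|\xi|\geq c>0$.

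On a patch of type (i) I would write $a=a(x,\xi;\mu)=\bigl(a_0^{(0)}+\wt{a}\bigr)$ and factor $a=a_0^{(0)}\bigl(1+(a_0^{(0)})^{-1}\wt a\bigr)$; since $\|(a_0^{(0)})^{-1}\|$ is bounded on $|\xi,\mu|=1$ by hypothesis and $\|\wt a\|\preceq|\xi|^\nu$ is small there, the second factor is exactly of the form $1+a_\chi$ with $\|a_\chi\|\le 1/2$, and Lemma \ref{lem:inv01} together with the product rule \eqref{eq:deriv_inv} shows its inverse lies in $\wt S^{(0,0)}(E_0,E_1)_{\bfkappa,\bfkappa}\subseteq S^{(0,0)}(E_0,E_1)$; meanwhile $(a_0^{(0)})^{-1}\in S^{(0)}(E_1,E_0)$ by the standard Schulze calculus (the derivative formula \eqref{eq:deriv_inv} applied to the homogeneous, $\kappa$-bounded symbol $a_0^{(0)}$, using the hypothesis on the unit sphere). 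On a patch of type (ii), since $|\xi|$ is bounded below, all occurrences of $\spk{\xi}$, $|\xi|$, $|\xi,\mu|$, $\spk{\xi,\mu}$ are mutually comparable, so the combined symbol estimate of Remark \ref{rem:sdnue0e1} for $S^{(0,\nu)}$ collapses to the plain estimate \eqref{eq:symb01_hom} of $S^{(0)}$ on that cone; invoking \eqref{eq:deriv_inv} once more, together with the hypothesis $\|a^{-1}\|\preceq 1$ on $|\xi,\mu|=1$ and $\kappa$-homogeneity to transport the bound on $a^{-1}$ to all radii (exactly as in the proof of Lemma \ref{lem:inv01}), gives $a^{-1}\in S^{(0)}(E_1,E_0)$ there.

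Finally I would glue: the global inverse $a^{-1}$ equals $\sum_k \chi_k(\xi/|\xi,\mu|,\mu/|\xi,\mu|)\,a^{-1}$, and each summand, by the two cases above, is a finite sum of a symbol in $S^{(0)}$ and a symbol in $\wt S^{(0,0)}\subseteq\wt S^{(0,\nu)}$, hence lies in $S^{(0,\nu)}(E_1,E_0)$; summing over the finite index set and using that this space is a vector space closed under multiplication by $\kappa$-homogeneous degree-$0$ scalar cut-offs gives $a^{-1}\in S^{(0,\nu)}(E_1,E_0)$. For general $d$ one multiplies $a$ by $[\xi,\mu]^{-d}$ as indicated before the proposition. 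The main obstacle I expect is the bookkeeping on the type-(i) patches: one must verify that the factor $(a_0^{(0)})^{-1}\wt a$ genuinely satisfies the weakly-homogeneous estimates \eqref{eq:symb02_hom} with regularity $\nu$ (so that Lemma \ref{lem:inv01} applies verbatim), which requires checking that differentiating $(a_0^{(0)})^{-1}$ in $\xi$ — governed by \eqref{eq:deriv_inv} and the strong estimate \eqref{eq:symb01_hom} — does not destroy the $|\xi|^{\nu-|\alpha|}$ decay carried by $\wt a$; this is where the interplay between the two symbol classes in the non-direct sum is most delicate, and one has to use that on the north-pole patch $\spk{\xi,\mu}\simeq|\xi,\mu|\simeq 1$ on the sphere so the $a_0$-factor contributes only bounded terms.
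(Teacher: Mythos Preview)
Your overall strategy is the same as the paper's: split the sphere into a neighborhood of the north pole (where you factor $a=a_0(1+a_0^{-1}\wt a)$ and invoke Lemma \ref{lem:inv01}) and its complement (where $|\xi|\sim|\xi,\mu|$ and the plain $S^{(0)}$-estimates apply). The paper uses a single cut-off $\wh\chi$ rather than a partition of unity, but this is cosmetic.

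There is, however, a genuine error in your bookkeeping of regularity. In the gluing step you assert
\[
\wt S^{(0,0)}\subseteq \wt S^{(0,\nu)},
\]
which is \emph{false} for $\nu>0$: the $\wt S^{(0,\nu)}$-estimate requires a bound $|\xi|^{\nu-|\alpha|}|\xi,\mu|^{-\nu}$, and a generic $\wt S^{(0,0)}$-symbol need not vanish as $\xi\to0$ on the sphere, so it cannot satisfy this for $|\alpha|=0$. (The inclusion in fact goes the other way.) Because of this, your argument on type-(i) patches, as written, only places $\chi_k a^{-1}$ in $S^{(0,0)}(E_1,E_0)$, not in $S^{(0,\nu)}(E_1,E_0)$.

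The fix is already contained in Lemma \ref{lem:inv01}: it gives $(1+a_\chi)^{-1}=1+\wt b$ with $\wt b\in\wt S^{(0,\nu)}$, not merely $\wt S^{(0,0)}$. You should retain this $\nu$ throughout. Then on a type-(i) patch,
\[
\chi_k a^{-1}=\chi_k a_0^{-1}+\chi_k a_0^{-1}\wt b,
\]
with $\chi_k a_0^{-1}\in S^{(0)}$ and $\chi_k a_0^{-1}\wt b\in \wt S^{(0,\nu)}$ (the latter because $S^{(0)}\subset\wt S^{(0,0)}$ and the pointwise product $\wt S^{(0,0)}\cdot\wt S^{(0,\nu)}\subset\wt S^{(0,\nu)}$, as a direct Leibniz computation shows). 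This is exactly the decomposition the paper arrives at.

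One smaller point: the boundedness of $(a_0^{(0)})^{-1}$ near the north pole is \emph{not} ``by hypothesis''. The hypothesis bounds $a^{-1}$, not $a_0^{-1}$; away from $\xi=0$, $a_0$ may fail to be invertible altogether. What you need (and what the paper spells out) is that $a_0(x,0;1)=a(x,0;1)$ since $\wt a$ vanishes at $\xi=0$, so $a_0(x,0;1)^{-1}$ is uniformly bounded by hypothesis, and then a Taylor/continuity argument extends this to a small spherical neighborhood $K$ of the north pole. You gesture at this earlier, but the phrase ``by hypothesis'' at the key moment obscures the actual logic.
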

\begin{proof}
	Write $a=a_0+\wt{a}$. Obviously, $a^{-1}$ is $\kappa$-homogeneous of degree $0$. Thus the assumed estimate is equivalent to 
	$$\|\kappa^{-1}_{0,|\xi,\mu|}a(x,\xi;\mu)^{-1}\kappa_{1,|\xi,\mu|}
	\|_{\scrL(E_1,E_0)} \preceq 1\qquad\forall\;x\in\rz^n\quad\forall\;(\xi,\mu)\not=0.$$
	By chain rule $($cf. \eqref{eq:deriv_inv}$)$ it follows that 
	\begin{align*}
	\|\kappa^{-1}_{0,|\xi,\mu|}\{D^\beta_{x}D^\alpha_{\xi^\prime}D^j_\mu a^{-1}
	(x,\xi;\mu)\}\kappa_{1,|\xi,\mu|}\|_{\scrL(E_1,E_0)}
	&\preceq 
	\Big[\Big(\frac{|\xi|}{|\xi,\mu|}\Big)^{\nu-|\alpha|}+1\Big]
	|\xi,\mu|^{-|\alpha|-j}
	\end{align*}
	Hence, if $\wh{\chi}(\xi,\mu)={\chi}((\xi,\mu)/|\xi,\mu|)$ 
	denotes the homogeneous extension of degree $0$ 
	of an arbitrary smooth function $\chi\in\scrC^{\infty}(\sz^{n}_+)$ which is identically 
	to $1$ in a neighborhood of the north-pole, then 
	$(1-\wh\chi)a^{-1}$ belongs to $S^{(0)}(E_1,E_0)$, since $|\xi|\sim|\xi,\mu|$ on the support of $1-\wh\chi$.  
	
	Since $a(x,0;1)=a_0(x,0;1)$, there exists a $C\ge 0$ such that 
	$\|a_0(x,0;1)^{-1}\|_{\scrL(E_1,E_0))}\le C$ for every $x$. By Taylor formula, 
	$$\|a_0(x,r\phi;\sqrt{1-r^2})-a_0(x,0;1)\|_{\scrL(E_0,E_1))}\preceq r$$
	uniformly in $x$ and $\phi\in\sz^{n-1}$. Hence there exists 
	a neighborhood $K\subset\sz^{n}_+$ of the north-pole $(0,1)$ such that 
	$a_0(x,\xi;\mu)^{-1}$ exists and is uniformly bounded in $\rz^{n}\times K$. 
	Thus if we choose $\chi$ to be compactly supported in $K$ and use chain-rule 
	$($cf. \eqref{eq:deriv_inv}$)$, we obtain that $\wh{\chi}a_0^{-1}$ belongs to 
	$S^{(0)}(E_1,E_0)$. 
	
	Now let $\chi_1\in\scrC^\infty(\sz^{n}_+)$ be also supported in $K$, $0\le \chi_1\le 1$, and $\chi_1\equiv 1$ in a neighborhood of the support of $\chi$. Then we can write 
	$$\wh{\chi}a^{-1}=\wh{\chi}(a_0+\wt{a})^{-1}=\wh{\chi}a_0^{-1}(1+\wh\chi_1a_0^{-1}\wt{a})^{-1},$$
	where $\wh\chi_1$ is the extension of $\chi_1$ by homogeneity of degree $0$. 
	Since $\wh\chi_1a_0^{-1}\wt{a}$ belongs to 
	$\wt{S}^{(0,\nu)}(E_0,E_1)_{\bfkappa,\bfkappa}$, we may choose $\chi_1$ 
	$($and $\chi)$ to be supported so closely to the north-pole that Lemma \ref{lem:inv01} applies, 
	i.e., $(1+\wh\chi_1a_0^{-1}\wt{a})^{-1}=1+\wt{b}$ with a symbol 
	$\wt{S}^{(0,\nu)}(E_1,E_0)_{\bfkappa,\bfkappa}$. Altogether we find 
	$$a^{-1}=[(1-\wh\chi)a^{-1}+\wh\chi a_0^{-1}]+\wh\chi a_0^{-1}\wt{b}.$$
	This is the desired representation of $a^{-1}$. 
\end{proof}

\begin{definition}
A symbol $a\in S^{0,\nu}(E_0,E_1)$, $\nu>0$, is called elliptic if  
its principal symbol $\sigma^{(0,\nu)}(a)(x,\xi;\mu)$ is invertible whenever $(\xi,\mu)\not=0$ and  
$$\|\sigma^{(0,\nu)}(a)(x,\xi;\mu)^{-1}\|_{\scrL(E_1,E_0)} \preceq 1\qquad  \forall\;x\in\rz^n\quad\forall\;|\xi,\mu|=1. $$
\end{definition}

\begin{theorem}
	Let $a\in S^{0,\nu}(E_0,E_1)$, $\nu>0$, be elliptic. 
	Then $a$ has a parametrix $b\in S^{0,\nu}(E_1,E_0)$, i.e.,  
	$$1-a\#b\in S^{-\infty,\nu-\infty}(E_1,E_1),\qquad 
	  1-b\#a\in S^{-\infty,\nu-\infty}(E_0,E_0).$$
\end{theorem}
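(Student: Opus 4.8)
The plan is to construct the parametrix by the standard formal Neumann series argument, using the symbolic calculus (Theorems \ref{thm:comp02} and \ref{thm:adj02}) together with the inversion result for homogeneous principal symbols (Proposition \ref{prop:inverse_principal}). First I would invoke ellipticity: $\sigma^{(0,\nu)}(a)$ is pointwise invertible off zero with uniformly bounded inverse on the unit sphere, so by Proposition \ref{prop:inverse_principal} its inverse $p:=\sigma^{(0,\nu)}(a)^{-1}$ lies in $S^{(0,\nu)}(E_1,E_0)$. Using the surjectivity part of Proposition \ref{prop:principal-map} (the principal symbol map is onto modulo regularizing symbols), I lift $p$ to an actual symbol $b_0\in S^{0,\nu}(E_1,E_0)$ with $\sigma^{0,\nu}(b_0)=p$. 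By multiplicativity of the principal symbol under the Leibniz product, $\sigma^{0,\nu}(b_0\#a)=p\,\sigma^{0,\nu}(a)=1$, hence $r:=1-b_0\#a\in S^{0,\nu}(E_0,E_0)$ has vanishing principal symbol, so by the Corollary following Proposition \ref{prop:principal-map} in fact $r\in S^{-1,\nu-1}(E_0,E_0)$.

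Next I would iterate this normalization. Set $b_0\#a=1-r$ with $r\in S^{-1,\nu-1}$, and define the formal Neumann series $\sum_{k\ge0} r^{\#k}$ where $r^{\#k}$ denotes the $k$-fold Leibniz power. By Theorem \ref{thm:comp02}, since $r$ has order $-1$ and regularity $\nu-1$, the composition rule gives $r^{\#k}\in S^{-k,\,\nu^{(k)}}$ where the regularity $\nu^{(k)}$ obeys $\nu^{(k)}=\min(\nu-1,\ldots)$; one checks that the decrease in order is accompanied by a decrease in regularity at the same rate, so $r^{\#k}\in S^{-k,\nu-k}$. The asymptotic summation procedure available in these classes (the Remark on asymptotic summation of weakly parameter-dependent symbols, extended to the sum classes) then produces $s\in S^{0,\nu-?}(E_0,E_0)$ — here one has to be mildly careful about how regularity degrades, but since $r$ already has regularity $\nu-1<\nu$, taking $s\sim\sum r^{\#k}$ yields a symbol with $1-(1-r)\#s=1-b_0\#a\#s$ differing from the identity by a symbol in $S^{-\infty,\nu-\infty}$. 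Concretely, put $b:=s\#b_0$; then $b\#a=s\#(b_0\#a)=s\#(1-r)\equiv 1$ modulo $S^{-\infty,\nu-\infty}(E_0,E_0)$.

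Then I would produce a right parametrix by the symmetric argument: starting instead from $a\#b_0'=1-r'$ with $r'\in S^{-1,\nu-1}(E_1,E_1)$ (using that $\sigma^{0,\nu}(a)\,p=1$ as well) and summing a Neumann series on the right gives $b'\in S^{0,\nu}(E_1,E_0)$ with $a\#b'\equiv1$ modulo $S^{-\infty,\nu-\infty}(E_1,E_1)$. The usual formal computation $b\equiv b\#(a\#b')\equiv(b\#a)\#b'\equiv b'$ modulo regularizing symbols (legitimate because $\#$ is associative at the level of operators and the regularizing ideal $S^{-\infty,\nu-\infty}$ is a two-sided ideal absorbed by $\#$-multiplication, cf.\ the remark after Theorem \ref{thm:comp02}) shows $b$ is a two-sided parametrix, so both $1-a\#b$ and $1-b\#a$ lie in the respective $S^{-\infty,\nu-\infty}$ classes.

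The main obstacle is bookkeeping of the regularity index rather than anything conceptually deep: one must verify that in the Neumann iteration the regularity does not stall but genuinely decreases together with the order, so that the asymptotic sum lands back in a class of the original regularity $\nu$ (not something worse), and that the asymptotic summation from the weak-symbol Remark is available for the sum classes $S^{d,\nu}$ in the form needed here — i.e.\ summing symbols of order $-k$ and regularity $\nu-k$ gives a symbol of order $0$ and regularity $\nu$ whose partial-sum remainders improve in both indices. The multiplicativity formula $\nu=\min(\nu_0,\nu_1,\nu_0+\nu_1)$ in Theorem \ref{thm:comp02} is exactly what makes this work: with $r$ of regularity $\nu-1$ one gets $r^{\#k}$ of regularity at least $\nu-k$ (indeed $\min$ over the three quantities with the small $k$-decrements), which is the decay needed for the asymptotic sum. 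Once this accounting is pinned down, the rest is the routine formal-series manipulation above.
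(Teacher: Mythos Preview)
Your proposal is correct and follows essentially the same approach as the paper: invert the homogeneous principal symbol via Proposition \ref{prop:inverse_principal}, lift it to a symbol $b_0\in S^{0,\nu}(E_1,E_0)$ via Proposition \ref{prop:principal-map}, observe that $1-a\#b_0$ (respectively $1-b_0\#a$) has vanishing principal symbol and hence lies in $S^{-1,\nu-1}$, and then run the Neumann series to improve to a two-sided parametrix. The paper compresses the entire last step into the phrase ``by the standard Neumann series argument'', whereas you spell out the regularity bookkeeping---your verification that $r^{\#k}\in S^{-k,\nu-k}$ via the rule $\nu=\min(\nu_0,\nu_1,\nu_0+\nu_1)$ of Theorem \ref{thm:comp02}, and that the asymptotic sum lands back in $S^{0,\nu}$, is exactly what is needed and is correct.
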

\begin{proof}
Due to the previous proposition, we can write $\sigma^{(0,\nu)}(a)^{-1}=b^{(0)}+\wt{b}^{(0,\nu)}$ with 
the obvious meaning of notation. 
Define $b_0=\chi_0(\xi,\mu)b^{(0)}+\chi_1(\xi)\wt{b}^{(0,\nu)}$ with $0$-excision functions $\chi_0$ and $\chi_1$. Then $b_0\in S^{0,\nu}(E_1,E_0)$ and $r_0:=1-ab_0\in S^{-1,\nu-1}_\cl(E_1,E_1)$, since $r_0$ has vanishing principal symbol. By the standard Neumann series argument we find a $b\in S^{0,\nu}(E_1,E_0)$ which is a right-parametrix of $a$. Analogously, we find a left-parametrix. It differs from $b$ by a regularizing term, hence $b$ is both left- and right-parametrix. 
\end{proof}

\section{Poisson operators of Grubb's class}\label{sec:poisson}

In this section we discuss the main example that underlies our approach to describe singular Green operators, as it will be presented in the following Section \ref{sec:green02}. 
 
To this end, let us recall the definition of Poisson operators due to \cite{Grub}. 
A parameter-dependent Poisson-operator of order $d+\frac{1}{2}$ and regularity $\nu$ 
is of the form 
\begin{equation}\label{eq:poisson-operator}
 [K(\mu)u](x^\prime,x_n)=\int e^{ix^\prime\xi^\prime}
  k(x^\prime,x_n,\xi^\prime;\mu)\wh{u}(\xi^\prime)\,\dbar\xi^\prime,
 \qquad u\in\scrS(\rz^{n-1}),
\end{equation}
where the so-called symbol-kernel $k(x^\prime,x_n,\xi^\prime;\mu)$ is a smooth function 
satisfying the estimates 
\begin{align}\label{eq:poisson-kernel}
\begin{split} 
\|D^{\beta}_{x^\prime}D^{\alpha}_{\xi^\prime}D^j_\mu x_n^\ell D_{x_n}^{\ell^\prime}
   k\|_{L_2(\rz_+)}
   \preceq& \Big[\Big(\frac{\spk{\xi^\prime}}{\spk{\xi^\prime,\mu}}\Big)^{\nu-[\ell-\ell^\prime]_+-|\alpha|}+1\Big]\spk{\xi^\prime,\mu}^{d-\ell+\ell^\prime-|\alpha|-j}
\end{split}
\end{align}
for every order of derivatives and every $\ell\in\nz_0$. Here, for arbitrary $r\in\rz$, 
$$r_+=\max\{0,r\},\qquad r_-=\max\{0,-r\},\qquad r=r_+-r_-.$$  
Obviously, for every fixed $\mu$, a Poisson operator induces a map 
 $$K(\mu):\scrS(\rz^{n-1})\lra \scrS(\rz^n_+):=\scrS(\rz^n)|_{\rz^n_+},$$
i.e., maps functions defined on the boundary of the half-plane $\rz^n_+$ to functions defined on the half-space. 

We shall need the following definition$:$ 

\begin{definition}
	Let $E$ be a Fr\'echet space. 
	Then $\mathfrak{S}^{d}_{1,0}(\rz^{n-1}\times\rpbar;E)=:\mathfrak{S}^{d}_{1,0}(E)$  
	consists of all smooth $E$-valued functions 
	$\mathfrak{a}(x^\prime,\xi^\prime;\mu)$ satisfying the uniform estimates 
	$$p(D^{\beta}_{x^\prime}D^\alpha_{\xi^\prime}D^j_\mu \mathfrak{a}(x^\prime,\xi^\prime;\mu))\preceq \spk{\xi^\prime,\mu}^{d-|\alpha|-j}$$
	for every continuous semi-norm $p(\cdot)$ of $E$ and every order of derivatives. Instead, the estimates 
	$$p(D^{\beta}_{x^\prime}D^\alpha_{\xi^\prime}D^j_\mu \mathfrak{a}(x^\prime,\xi^\prime;\mu))\preceq 
	\spk{\xi^\prime}^{\nu-|\alpha|} \spk{\xi^\prime,\mu}^{d-\nu-j}$$	
	define the space 
	$\wt{\mathfrak{S}}^{d,\nu}_{1,0}(\rz^{n-1}\times\rpbar;E)=:\wt{\mathfrak{S}}^{d,\nu}_{1,0}(E)$. 
\end{definition}


\subsection{Strongly parameter-dependent Poisson operators}\label{sec:strongly-poisson}

First let us discuss those Poisson operators that depend strongly on the parameter, i.e., have 
infinite regularity. In case of order $d+\frac{1}{2}$, the corresponding symbol-kernels 
are characterized by the estimates 
 $$\|D^{\beta}_{x^\prime}D^{\alpha}_{\xi^\prime}D^j_\mu x_n^\ell D_{x_n}^{\ell^\prime}k\|_{L^2(\rz_+)}
\preceq \spk{\xi^\prime,\mu}^{d-\ell+\ell^\prime-|\alpha|-j}.$$
For such operators the following characterization is known, cf. equations (3) and (6) in the 
proof of \cite[Theorem 2.1.19]{ScSc1}, for instance. 

\begin{theorem}\label{thm:poisson2nd}
A smooth function $k(x^\prime,x_n,\xi^\prime;\mu)$ is a strongly parameter-dependent Poisson symbol-kernel of order $d+\frac{1}{2}$ and infinite regularity if, and only if, it has the form 
 $$k(x^\prime,x_n,\xi^\prime;\mu)
     =\spk{\xi^\prime,\mu}^{1/2}\mathfrak{k}(x^\prime,\xi^\prime,\spk{\xi^\prime,\mu}x_n;\mu),$$
with 
 $$\mathfrak{k}(x^\prime,\xi^\prime,x_n;\mu)\in \mathfrak{S}^{d}_{1,0}(\rz^{n-1}\times\rpbar;\scrS(\rz_+)).$$
\end{theorem}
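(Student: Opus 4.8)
The statement is an ``if and only if'' characterization, so I will prove the two implications separately. For the ``if'' direction, I assume $\mathfrak{k}\in\mathfrak{S}^{d}_{1,0}(\rz^{n-1}\times\rpbar;\scrS(\rz_+))$ and set $k(x',x_n,\xi';\mu)=\spk{\xi',\mu}^{1/2}\mathfrak{k}(x',\xi',\spk{\xi',\mu}x_n;\mu)$. The task is to verify the strongly parameter-dependent Poisson-kernel estimates $\|D^{\beta}_{x'}D^{\alpha}_{\xi'}D^j_\mu x_n^\ell D_{x_n}^{\ell'}k\|_{L^2(\rz_+)}\preceq\spk{\xi',\mu}^{d-\ell+\ell'-|\alpha|-j}$. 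The key observation is the scaling behaviour of the $L^2(\rz_+)$-norm under the substitution $x_n\mapsto\spk{\xi',\mu}x_n$: if $g\in\scrS(\rz_+)$ and $g_\lambda(x_n)=\lambda^{1/2}g(\lambda x_n)$ then $\|x_n^\ell D_{x_n}^{\ell'}g_\lambda\|_{L^2}=\lambda^{-\ell+\ell'}\|x_n^\ell D_{x_n}^{\ell'}g\|_{L^2}$, i.e. this is exactly the standard group action of Example~\ref{ex:standard-group-action}, which is unitary on $L^2(\rz_+)$. Applying $D^{\alpha}_{\xi'}D^j_\mu$ to $k$ via the Leibniz rule produces a finite sum of terms in which derivatives land either on the prefactor $\spk{\xi',\mu}^{1/2}$ (each giving a factor $\spk{\xi',\mu}^{-1}$ per derivative), on $\mathfrak{k}$ in its first, second or fourth slot (controlled by the $\mathfrak{S}^{d}_{1,0}$-estimates with values in $\scrS(\rz_+)$, giving $\spk{\xi',\mu}^{d-(\text{number of such derivatives})}$), or on the scaled argument $\spk{\xi',\mu}x_n$ in the third slot (each such derivative brings out a factor $(D_{\xi'}$ or $D_\mu$ of $\spk{\xi',\mu})\cdot x_n$, i.e. essentially $\spk{\xi',\mu}^{0}x_n$, which is absorbed into the seminorm controlling $x_n\cdot(\partial_{x_n}\mathfrak{k})$). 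Collecting the $x_n$-powers together with $x_n^\ell D_{x_n}^{\ell'}$ and using the unitarity of the rescaling on $L^2(\rz_+)$, together with the fact that $\mathfrak{k}$ takes values in $\scrS(\rz_+)$ so every seminorm $\|x_n^p D_{x_n}^{q}\cdot\|_{L^2(\rz_+)}$ of $\mathfrak{k}$ and its $(x',\xi',\mu)$-derivatives is finite and obeys the $\mathfrak{S}^{d}_{1,0}$ growth, yields the claimed bound; the bookkeeping of exponents $d-\ell+\ell'-|\alpha|-j$ comes out correctly because the scaling contributes exactly $-\ell+\ell'$ and each $\xi'$- or $\mu$-derivative contributes exactly $-1$.

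For the ``only if'' direction, I start from a strongly parameter-dependent symbol-kernel $k$ and \emph{define} $\mathfrak{k}$ by inverting the formula: $\mathfrak{k}(x',\xi',x_n;\mu)=\spk{\xi',\mu}^{-1/2}k(x',\xi',x_n/\spk{\xi',\mu};\mu)$. I must show $\mathfrak{k}\in\mathfrak{S}^{d}_{1,0}(\rz^{n-1}\times\rpbar;\scrS(\rz_+))$, i.e. that for every seminorm $p$ of $\scrS(\rz_+)$ — concretely $p(g)=\sup_{x_n}\langle x_n\rangle^p|D_{x_n}^q g(x_n)|$ or equivalently the $L^2$-based seminorms $\|x_n^p D_{x_n}^q g\|_{L^2(\rz_+)}$ — one has $p(D^{\beta}_{x'}D^\alpha_{\xi'}D^j_\mu\mathfrak{k})\preceq\spk{\xi',\mu}^{d-|\alpha|-j}$. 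This is the reverse computation: the unscaling $x_n\mapsto x_n/\spk{\xi',\mu}$ is again the (inverse) group action, unitary on $L^2$, and differentiating in $\xi',\mu$ now produces the inverse bookkeeping — each derivative of the unscaled argument $x_n/\spk{\xi',\mu}$ brings out a factor of order $x_n\spk{\xi',\mu}^{-1}$, and the $x_n$-weight is harmless since we are measuring weighted $\scrS(\rz_+)$-seminorms. I should be slightly careful that I may translate between the $L^2(\rz_+)$-Sobolev-type seminorms appearing in \eqref{eq:poisson-kernel} and the Schwartz seminorms defining $\mathfrak{S}^{d}_{1,0}(\scrS(\rz_+))$; on $\rz_+$ these generate the same topology, so it suffices to check the estimates for one generating family, which I take to be the $L^2$-based ones to match \eqref{eq:poisson-kernel} directly.

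Since the result is cited as already known (see \cite[Theorem 2.1.19]{ScSc1}, equations (3) and (6)), I may present the argument briefly, essentially recording it for completeness. The main technical point, and the only place where any care is needed, is the Leibniz-rule bookkeeping: one must check that when $D^\alpha_{\xi'}D^j_\mu$ falls on the rescaled third argument $\spk{\xi',\mu}x_n$ of $\mathfrak{k}$, the resulting powers of $x_n$ combine with the existing factors $x_n^\ell$ and with the $\scrS(\rz_+)$-valued nature of $\mathfrak{k}$ to still produce a finite $L^2(\rz_+)$-seminorm obeying the correct growth in $\spk{\xi',\mu}$ — this works precisely because $\mathfrak{k}$ is Schwartz in $x_n$ with all $(x',\xi',\mu)$-derivatives again Schwartz, so arbitrary polynomial weights in $x_n$ are permitted. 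Everything else is the unitarity of the standard group action on $L^2(\rz_+)$ together with $\frac{d}{d\xi'}\spk{\xi',\mu}=O(1)$ and $\frac{d}{d\mu}\spk{\xi',\mu}=O(1)$, which I will note and then leave the remaining exponent-counting to the reader.
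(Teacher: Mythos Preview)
The paper does not give its own proof of this theorem; it simply cites the result as known from the literature (equations (3) and (6) in the proof of \cite[Theorem~2.1.19]{ScSc1}). So there is nothing to compare against directly.

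That said, your sketch is correct and is exactly the standard argument. The key point --- that the chain rule applied to $k=\kappa_{\spk{\xi',\mu}}\mathfrak{k}$ produces a finite linear combination of terms of the form
\[
p_{i,j',\alpha'}(\xi';\mu)\,\spk{\xi',\mu}^{1/2}\,\big(D^\beta_{x'}(x_nD_{x_n})^iD^{\alpha''}_{\xi'}D^{j''}_\mu\mathfrak{k}\big)(x',\xi',\spk{\xi',\mu}x_n;\mu)
\]
with $p_{i,j',\alpha'}\in S^{-i-j'-|\alpha'|}_{1,0}$ --- is in fact spelled out explicitly in the paper in the proof of Theorem~\ref{thm:charact-strongly}, where it is used (citing the present theorem) in both directions. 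Your bookkeeping is slightly informal in one place: when a $\xi'$- or $\mu$-derivative hits the scaled argument you get $(\partial\spk{\xi',\mu})\cdot x_n\cdot\partial_{y_n}\mathfrak{k}$, and it is cleaner to write this as $\spk{\xi',\mu}^{-1}(\partial\spk{\xi',\mu})\cdot(y_n\partial_{y_n}\mathfrak{k})$, which makes manifest both the order $-1$ gain and the fact that the resulting factor $(y_nD_{y_n})^i\mathfrak{k}$ is again Schwartz in $y_n$ with the same $\mathfrak{S}^d_{1,0}$-growth. With that minor rewriting your exponent count is transparent and both implications go through as you describe.
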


Observe, for purposes below, that in this case we can also write  
 $$k(x^\prime,x_n,\xi^\prime;\mu) =\spk{\xi^\prime}^{1/2}\wt{\mathfrak{k}}(x^\prime,\xi^\prime,\spk{\xi^\prime}x_n;\mu)$$ 
with 
$$\wt{\mathfrak{k}}(x^\prime,\xi^\prime,x_n;\mu)
=\Big(\frac{\spk{\xi^\prime,\mu}}{\spk{\xi^\prime}}\Big)^{1/2}
\mathfrak{k}\Big(x^\prime,\xi^\prime,\frac{\spk{\xi^\prime,\mu}}{\spk{\xi^\prime}}x_n;\mu\Big)
\in \wt{\mathfrak{S}}^{d,0}_{1,0}(\rz^{n-1}\times\rpbar;\scrS^0(\rz_+)) .$$

\subsection{Weakly parameter-dependent Poisson operators}\label{sec:weakly-poisson}

Returning to the defining estimate \eqref{eq:poisson-kernel}, strongly parameter-dependent Poisson symbol-kernels 
correspond to the second summand on the right-hand side. 
Those corresponding to the first summand, i.e., that satisfy the estimates 
\begin{equation}\label{eq:estimate0}
\|D^\beta_{x^\prime}D^{\alpha}_{\xi^\prime}D^j_\mu x_n^\ell D_{x_n}^{\ell^\prime}\wt{k}\|_{L_2(\rz_+)}
\preceq\Big(\frac{\spk{\xi^\prime}}{\spk{\xi^\prime,\mu}}\Big)^{\nu-[\ell-\ell^\prime]_+-|\alpha|}
\spk{\xi^\prime,\mu}^{d-\ell+\ell^\prime-|\alpha|-j}
\end{equation}
we shall refer to as weakly parameter-dependent Poisson symbol-kernels. 
They can be characterized as follows$:$

\begin{theorem}
A smooth function $\wt{k}(x^\prime,x_n,\xi^\prime;\mu)$ is a weakly parameter-dependent Poisson symbol-kernel 	of order $d+\frac{1}{2}$ and regularity $\nu$ if, and only if, there exist 
	\begin{align*}
	\wt{\mathfrak{k}}_1(x^\prime,\xi^\prime,\mu;x_n)&\in \wt{\mathfrak{S}}^{d,\nu}_{1,0}
	(\rz^{n-1}\times\rpbar;\scrS^0(\rz_+)),\\
	\wt{\mathfrak{k}}_2(x^\prime,\xi^\prime,\mu;x_n)&\in \wt{\mathfrak{S}}^{d,\nu}_{1,0}
	(\rz^{n-1}\times\rpbar;H^\infty(\rz_+))
	\end{align*}
such that
	$$\wt{k}(x^\prime,x_n,\xi^\prime;\mu)
	=\spk{\xi^\prime}^{1/2}\,\wt{\mathfrak{k}}_1(x^\prime,\xi^\prime,\mu;\spk{\xi^\prime}x_n)
	=\spk{\xi^\prime,\mu}^{1/2}\,\wt{\mathfrak{k}}_2(x^\prime,\xi^\prime,\mu;\spk{\xi^\prime,\mu}x_n).$$
\end{theorem}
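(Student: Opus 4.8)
The strategy is to reduce the two claimed representations to the already-established analysis of strongly parameter-dependent Poisson operators (Theorem \ref{thm:poisson2nd} and the observation following it) by the substitution turning the ``$\wt S$''-type estimates into ``$S$''-type estimates after a rescaling of the normal variable. The key is that the extra factor $\big(\spk{\xi^\prime}/\spk{\xi^\prime,\mu}\big)^{\nu-[\ell-\ell^\prime]_+-|\alpha|}$ in \eqref{eq:estimate0} is exactly compensated, term by term, by rescaling $x_n$ via either $\spk{\xi^\prime}$ or $\spk{\xi^\prime,\mu}$.

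First I would prove the ``if'' direction. Suppose $\wt{\mathfrak{k}}_1\in\wt{\mathfrak{S}}^{d,\nu}_{1,0}(\scrS^0(\rz_+))$ is given and set $\wt k(x^\prime,x_n,\xi^\prime;\mu)=\spk{\xi^\prime}^{1/2}\wt{\mathfrak{k}}_1(x^\prime,\xi^\prime,\mu;\spk{\xi^\prime}x_n)$. One computes $x_n^\ell D_{x_n}^{\ell^\prime}$ applied to $\wt k$; each power of $x_n$ produces a factor $\spk{\xi^\prime}^{-1}$ and each $D_{x_n}$ a factor $\spk{\xi^\prime}$, so that after the substitution $s=\spk{\xi^\prime}x_n$, the $L^2(\rz_+)$-norm in $x_n$ picks up a Jacobian factor $\spk{\xi^\prime}^{-1/2}$, which cancels the prefactor $\spk{\xi^\prime}^{1/2}$. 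Then $D^\beta_{x^\prime}D^\alpha_{\xi^\prime}D^j_\mu$ hitting $\spk{\xi^\prime}$ and the rescaled argument of $\wt{\mathfrak{k}}_1$ is handled by the chain rule and the Leibniz rule, using that differentiating the argument $\spk{\xi^\prime}x_n$ in $\xi^\prime$ produces further powers of $x_n$, which are absorbed by the $\scrS^0(\rz_+)$-seminorms of $\wt{\mathfrak{k}}_1$ (the space $\scrS^0$ being closed under $x_n\mapsto x_n\cdot$ and $x_n\mapsto D_{x_n}$). Tracking the exponents of $\spk{\xi^\prime}$ and $\spk{\xi^\prime,\mu}$ carefully yields precisely \eqref{eq:estimate0}. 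The same computation with $\spk{\xi^\prime,\mu}$ in place of $\spk{\xi^\prime}$ and with $\wt{\mathfrak{k}}_2\in\wt{\mathfrak{S}}^{d,\nu}_{1,0}(H^\infty(\rz_+))$ gives the same estimates; here the weaker target space $H^\infty(\rz_+)$ suffices because the rescaling factor $\spk{\xi^\prime,\mu}$ dominates $\spk{\xi^\prime}$, so one does not gain Schwartz decay but only Sobolev regularity — and the equality of the two expressions for $\wt k$ just fixes the relation between $\wt{\mathfrak{k}}_1$ and $\wt{\mathfrak{k}}_2$, namely $\wt{\mathfrak{k}}_2(x^\prime,\xi^\prime,\mu;s)=\big(\spk{\xi^\prime}/\spk{\xi^\prime,\mu}\big)^{1/2}\wt{\mathfrak{k}}_1\big(x^\prime,\xi^\prime,\mu;(\spk{\xi^\prime}/\spk{\xi^\prime,\mu})s\big)$.

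For the ``only if'' direction, start from a $\wt k$ satisfying \eqref{eq:estimate0} and \emph{define} $\wt{\mathfrak{k}}_1(x^\prime,\xi^\prime,\mu;s):=\spk{\xi^\prime}^{-1/2}\wt k\big(x^\prime,s/\spk{\xi^\prime},\xi^\prime;\mu\big)$, i.e., invert the substitution. One must then verify $\wt{\mathfrak{k}}_1\in\wt{\mathfrak{S}}^{d,\nu}_{1,0}(\scrS^0(\rz_+))$: this requires showing that for every $\ell,\ell^\prime$ the $L^2(\rz_+)$-norm of $s^\ell D_s^{\ell^\prime}\wt{\mathfrak{k}}_1$, after applying $D^\beta_{x^\prime}D^\alpha_{\xi^\prime}D^j_\mu$, is bounded by $\spk{\xi^\prime}^{\nu-|\alpha|}\spk{\xi^\prime,\mu}^{d-\nu-j}$. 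Running the substitution backwards converts $s^\ell D_s^{\ell^\prime}$ into $\spk{\xi^\prime}^{\ell^\prime-\ell}x_n^\ell D_{x_n}^{\ell^\prime}$ (up to the Jacobian), and the crucial point is that in \eqref{eq:estimate0} the exponent $\nu-[\ell-\ell^\prime]_+-|\alpha|$ is bounded below (independently of $\ell,\ell^\prime$) only when $\ell\le\ell^\prime$; for $\ell>\ell^\prime$ one exploits that $\spk{\xi^\prime}\le\spk{\xi^\prime,\mu}$ so the factor $\big(\spk{\xi^\prime}/\spk{\xi^\prime,\mu}\big)^{\nu-(\ell-\ell^\prime)-|\alpha|}$ with a large negative exponent is still $\preceq\big(\spk{\xi^\prime}/\spk{\xi^\prime,\mu}\big)^{\text{bounded}}$ times the compensating power of $\spk{\xi^\prime}$ coming from $\spk{\xi^\prime}^{\ell^\prime-\ell}$ — one checks the bookkeeping shows the $[\,\cdot\,]_+$ in \eqref{eq:poisson-kernel}/\eqref{eq:estimate0} is precisely what makes both directions consistent. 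The construction of $\wt{\mathfrak{k}}_2$ is identical with $\spk{\xi^\prime,\mu}$ replacing $\spk{\xi^\prime}$; since $\spk{\xi^\prime,\mu}$ can be arbitrarily larger than $\spk{\xi^\prime}$, one only obtains membership in $\wt{\mathfrak{S}}^{d,\nu}_{1,0}(H^\infty(\rz_+))$ rather than with Schwartz target, matching the statement.

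\textbf{Main obstacle.} The delicate point is the interplay between the normal derivatives/multiplications ($x_n^\ell D_{x_n}^{\ell^\prime}$) and the tangential derivatives $D^\alpha_{\xi^\prime}$: after rescaling, differentiation in $\xi^\prime$ hits the argument $\spk{\xi^\prime}x_n$ (or $\spk{\xi^\prime,\mu}x_n$) and generates new factors of $x_n$, so one cannot treat the normal and tangential variables separately. The bookkeeping must simultaneously track the exponent of $\spk{\xi^\prime}$, the exponent of $\spk{\xi^\prime,\mu}$, and the polynomial order in $x_n$/$s$, and verify that the $[\ell-\ell^\prime]_+$-truncation in the exponent of \eqref{eq:estimate0} is exactly compatible with the $\scrS^0(\rz_+)$-seminorm structure (for $\wt{\mathfrak{k}}_1$) and with the mere $H^\infty(\rz_+)$-structure (for $\wt{\mathfrak{k}}_2$). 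I expect this to be a somewhat lengthy but essentially mechanical induction on the order of derivatives, closely paralleling the proof of Theorem \ref{thm:poisson2nd} cited from \cite{ScSc1}; the genuinely new ingredient over that reference is carrying the second weight $\spk{\xi^\prime}$ alongside $\spk{\xi^\prime,\mu}$ throughout.
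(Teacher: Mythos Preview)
Your proposal has a genuine gap in the ``if'' direction. You claim that from $\wt{\mathfrak{k}}_1\in\wt{\mathfrak{S}}^{d,\nu}_{1,0}(\scrS^0(\rz_+))$ alone, ``tracking the exponents \ldots\ yields precisely \eqref{eq:estimate0}'', and that the $\wt{\mathfrak{k}}_2$-computation ``gives the same estimates''. This is not correct: neither hypothesis alone suffices, and even together they do not give \eqref{eq:estimate0} by bookkeeping alone. The topology of $\scrS^0(\rz_+)$ is generated by the seminorms $\|t^\ell D_t^{\ell^\prime}u\|_{L^2}$ with $\ell\ge\ell^\prime$ \emph{only}; in particular $\scrS^0(\rz_+)$ is not continuously embedded in $H^1(\rz_+)$ (consider $u_\eps(t)=\phi(t/\eps)$ with $\phi\in\scrC^\infty_c(\rz_+)$). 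Hence the $\wt{\mathfrak{k}}_1$-hypothesis only yields the $\ell\ge\ell^\prime$ part of \eqref{eq:estimate0}. Likewise, the $H^\infty(\rz_+)$-seminorms are $\|D_t^{\ell^\prime}v\|_{L^2}$, so the $\wt{\mathfrak{k}}_2$-hypothesis only yields the case $\ell=0$. The range $0<\ell<\ell^\prime$ is not covered directly by either, and this is precisely where the $[\ell-\ell^\prime]_+$-truncation in the exponent of \eqref{eq:estimate0} is relevant.

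The paper closes this gap by an interpolation via H\"older's inequality: for $0<\ell<\ell^\prime$ one writes (with $p=\ell^\prime/\ell$)
\[
\|x_n^\ell D_{x_n}^{\ell^\prime}\wt{\mathfrak{k}}_1\|_{L^2(\rz_+)}
\le
\|x_n^{\ell^\prime}D_{x_n}^{\ell^\prime}\wt{\mathfrak{k}}_1\|_{L^2(\rz_+)}^{\ell/\ell^\prime}
\,\|D_{x_n}^{\ell^\prime}\wt{\mathfrak{k}}_1\|_{L^2(\rz_+)}^{1-\ell/\ell^\prime},
\]
and then inserts the $\ell\ge\ell^\prime$ bound (from the $\scrS^0$-hypothesis) into the first factor and the $\ell=0$ bound (from the $H^\infty$-hypothesis, after rescaling) into the second. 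The resulting product of powers of $\spk{\xi^\prime,\mu}/\spk{\xi^\prime}$ gives exactly the exponent $\ell^\prime-\ell$ required by \eqref{eq:estimate0} in this range. This is the one non-mechanical step you are missing; once you see it, the rest of your outline (the chain-rule reductions, and the ``only if'' direction, which \emph{is} just bookkeeping since one simply restricts \eqref{eq:estimate0} to $\ell\ge\ell^\prime$ and to $\ell=0$) is essentially the paper's argument. Note also that in your ``only if'' paragraph you have the cases reversed: it is the $\ell\ge\ell^\prime$ case that matches the $\scrS^0$-seminorms directly, not the $\ell\le\ell^\prime$ case.
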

\begin{proof}
	For simplicity of notation let us assume independence of the $x^\prime$-variable; the general case is verified 
	in the same way. 	
	By multiplication of $k$ with $\spk{\xi^\prime,\mu}^{\nu-d}\spk{\xi^\prime}^{-\nu}$ 
	we may assume w.l.o.g. that $\nu=d=0$. Thus let us start out from the estimates 
	\begin{align}\label{eq:estimate1}
	\begin{split} 
	\|D^{\alpha}_{\xi^\prime}D^j_\mu x_n^\ell D_{x_n}^{\ell^\prime}\wt{k}\|_{L^2(\rz_+)}
	&\preceq
	\begin{cases}
	\spk{\xi^\prime}^{\ell^\prime-\ell}\spk{\xi^\prime}^{-|\alpha|}\spk{\xi^\prime,\mu}^{-j}
	&\quad:\ell\ge\ell^\prime\\
	\spk{\xi^\prime,\mu}^{\ell^\prime-\ell}\spk{\xi^\prime}^{-|\alpha|}\spk{\xi^\prime,\mu}^{-j}
	&\quad:\ell<\ell^\prime
	\end{cases}.
	\end{split}
	\end{align}
	Now let us define 
	$$\wt{\mathfrak{k}}_1(\xi^\prime,\mu;x_n):=\spk{\xi^\prime}^{-1/2}\wt{k}(\xi^\prime,\mu;\spk{\xi^\prime}^{-1}x_n).$$
	Using chain rule it is not difficult to see that the estimates \eqref{eq:estimate1} are equivalent 
	to 
	\begin{align}\label{eq:estimate2}
	\|D^{\alpha}_{\xi^\prime}D^j_\mu x_n^\ell D_{x_n}^{\ell^\prime}\wt{\mathfrak{k}}_1\|_{L^2(\rz_+)}
	\preceq 
	\begin{cases}
	\spk{\xi^\prime}^{-|\alpha|}\spk{\xi^\prime,\mu}^{-j}
	&\quad:\ell\ge\ell^\prime\\
	\left(\frac{\spk{\xi^\prime,\mu}}{\spk{\xi^\prime}}\right)^{\ell^\prime-\ell}
	\spk{\xi^\prime}^{-|\alpha|}\spk{\xi^\prime,\mu}^{-j}
	&\quad:\ell<\ell^\prime
	\end{cases}
	\end{align}
	$($for all choices of indices $\alpha,j,\ell,\ell^\prime)$. 
	Now we argue that these estimates in turn are equivalent to 
	\begin{align}\label{eq:estimate3}
	\|D^{\alpha}_{\xi^\prime}D^j_\mu x_n^\ell D_{x_n}^{\ell^\prime}\wt{\mathfrak{k}}_1\|_{L^2(\rz_+)}
	\preceq \spk{\xi^\prime}^{-|\alpha|}\spk{\xi^\prime,\mu}^{-j},\qquad\ell\ge\ell^\prime,
	\end{align}
	and
	\begin{align}\label{eq:estimate4}
	\|D^{\alpha}_{\xi^\prime}D^j_\mu D_{x_n}^{\ell^\prime}\wt{\mathfrak{k}}_1\|_{L^2(\rz_+)}
	\preceq 
	\left(\frac{\spk{\xi^\prime,\mu}}{\spk{\xi^\prime}}\right)^{\ell^\prime}
	\spk{\xi^\prime}^{-|\alpha|}\spk{\xi^\prime,\mu}^{-j},
	\qquad\ell^\prime\ge 0. 
	\end{align}
	Clearly, \eqref{eq:estimate2} implies both \eqref{eq:estimate3} and \eqref{eq:estimate4}. 
	Vice versa, let $0<\ell<\ell^\prime$ be given. 
	Observe that, by H\"older inequality, 
	$$\|uv\|_{L^2(\rz_+)}\le \||u|^pv\|_{L^2(\rz_+)}^{1/p}\|v\|_{L^2(\rz_+)}^{1/q},
	\qquad {\frac{1}{p}}+{\frac{1}{q}}=1.$$ 
	We apply this inequality with $u=x^\ell_n$, $v=D^{\alpha}_{\xi^\prime}D^j_\mu 
	D_{x_n}^{\ell^\prime}\wt{\mathfrak{k}}_1$ 
	and $p=\ell^\prime/\ell$ and find 
	\begin{align*}
	\|D^{\alpha}_{\xi^\prime}D^j_\mu x_n^\ell D_{x_n}^{\ell^\prime}\wt{\mathfrak{k}}_1\|_{L^2(\rz_+)}
	\le\|D^{\alpha}_{\xi^\prime}D^j_\mu x_n^{\ell^\prime}    
	D_{x_n}^{\ell^\prime}\wt{\mathfrak{k}}_1\|_{L^2(\rz_+)}^{\ell/\ell^\prime}
	\|D^{\alpha}_{\xi^\prime}D^j_\mu D_{x_n}^{\ell^\prime}\wt{\mathfrak{k}}_1\|_{L^2(\rz_+)}^{1-\ell/\ell^\prime}.  
	\end{align*}
	Inserting the estimates from \eqref{eq:estimate3} and \eqref{eq:estimate4} yields 
	the second estimate in \eqref{eq:estimate2}. 
	
	If we define
	$$\wt{\mathfrak{k}}_2(\xi^\prime,\mu;x_n)
	=\left(\frac{\spk{\xi^\prime}}{\spk{\xi^\prime,\mu}}\right)^{1/2} 
	\wt{\mathfrak{k}}_1\Big(\xi^\prime,\mu;\frac{\spk{\xi^\prime}}{\spk{\xi^\prime,\mu}}x_n\Big)
	=\spk{\xi^\prime,\mu}^{-1/2}\wt{k}(\xi^\prime,\mu;\spk{\xi^\prime,\mu}^{-1}x_n),$$
	then \eqref{eq:estimate4} is equivalent to 
	\begin{align}\label{eq:estimate5}
	\|D^{\alpha}_{\xi^\prime}D^j_\mu D_{x_n}^{\ell^\prime}\wt{\mathfrak{k}}_2\|_{L^2(\rz_+)}
	\preceq 
	\spk{\xi^\prime}^{-|\alpha|}\spk{\xi^\prime,\mu}^{-j},
	\qquad\ell^\prime\ge 0. 
	\end{align}
	
	Summing up, we have verified that the assumption \eqref{eq:estimate1} is equivalent 
	to the validity of the estimates \eqref{eq:estimate3} and \eqref{eq:estimate5}.  
	Now recall that 
	\begin{align*}
	u\in\scrS^0(\rz_+)
	&\iff \omega u\in\scrH^{\infty,0}(\rz_+)\text{ and } (1-\omega) u\in\scrS(\rz)\\
	&\iff \|t^\ell D_t^{\ell^\prime} u\|_{L^2(\rz_+)}<+\infty\quad\forall\;\ell\ge\ell^\prime;  
	\end{align*}
	moreover, the semi-norms $\|t^\ell D_t^{\ell^\prime} u\|_{L^2(\rz_+)}$, 
	$\ell\ge\ell^\prime\in\nz_0$, define the topology of $\scrS^0(\rz_+)$. 
	Hence the estimates in \eqref{eq:estimate3} are equivalent to 
	$\wt{\mathfrak{k}}_1\in \wt{\mathfrak{S}}^{0,0}(\rz^{n-1}\times\rpbar;\scrS^0(\rz_+))$.  
	Similarly, recalling that 
	$$v\in H^\infty(\rz_+)\iff 
	\|D_t^{\ell^\prime} v\|_{L^2(\rz_+)}<+\infty\quad\forall\;\ell^\prime\in\nz_0,$$
	the estimates in \eqref{eq:estimate5} are equivalent to 
	$\wt{\mathfrak{k}}_2\in \wt{\mathfrak{S}}^{0,0}(\rz^{n-1}\times\rpbar;H^\infty(\rz_+))$. 
	This finishes the proof of the theorem.  
\end{proof}	

\subsection{Poisson symbol-kernels viewed as operator-valued symbols}\label{sec:opval-poisson}

We shall identify a Poisson-kernel $k(x^\prime,x_n,\xi^\prime;\mu)$ with the function 
 $$k:\rz^{n-1}_{x^\prime}\times\rz^{n-1}_{\xi^\prime}\times\rpbar
   \lra \scrL(\cz,\scrS(\rz_+)),\qquad c\mapsto k(x^\prime,\cdot,\xi^\prime;\mu)c.$$
Since $\scrS(\rz_+)$ is embedded both in $H^{s,\delta}(\rz_+)$ and $\scrK^{s,0}(\rz_+)^\delta$,  
\begin{align}\label{eq:xyz}
\begin{split}
 k\in &\mathop{\mbox{\large$\cap$}}_{s,\delta\in\rz}
 \scrC^\infty\big(\rz^{n-1}\times\rz^{n-1}\times\rpbar,\scrL(\cz,H^{s,\delta}(\rz_+))\big),\\
 k\in &\mathop{\mbox{\large$\cap$}}_{s,\rho\in\rz} 
 \scrC^\infty\big(\rz^{n-1}\times\rz^{n-1}\times\rpbar,\scrL(\cz,\scrK^{s,0}(\rz_+)^\rho)\big) 
\end{split}
\end{align}

\begin{theorem}\label{thm:charact-strongly}
A function $k$ from \eqref{eq:xyz} corresponds to a strongly parameter dependent Poisson symbol-kernel 
of order $d+\frac{1}{2}$ if, and only if, 
\begin{equation}\label{eq:abc}
 \|\kappa^{-1}(\xi^\prime,\mu) D^\alpha_{\xi^\prime}D^\beta_{x^\prime}D^j_\mu 
   k(x^\prime,\xi^\prime;\mu)\|_{\scrL(\cz,H^{s,\delta}(\rz_+))}
   \preceq \spk{\xi^\prime,\mu}^{d+\frac{1}{2}-|\alpha|-j}
\end{equation}
for every $s,\delta\in\rz$ and all orders of derivatives. In other terms, 
 $$k\in \mathop{\mbox{\large$\cap$}}_{s,\delta\in\rz}
   S^{d+\frac{1}{2}}_{1,0}(\rz^{n-1}\times\rpbar;\cz,H^{s,\delta}(\rz_+)),$$
where $H^{s,\delta}(\rz_+)$ is equipped with the standard group-action from Example $\ref{ex:standard-group-action}$, while on $\cz$ we consider the trivial group-action $\kappa\equiv1$. 
\end{theorem}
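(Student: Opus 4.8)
The plan is to reduce the statement to the structure theorem for strongly parameter-dependent Poisson symbol-kernels, Theorem \ref{thm:poisson2nd}, by observing that the anisotropic rescaling $x_n\mapsto\spk{\xi^\prime,\mu}\,x_n$ together with the weight $\spk{\xi^\prime,\mu}^{1/2}$ appearing there is exactly the standard group action on $\scrS(\rz_+)$. Writing $\lambda=\spk{\xi^\prime,\mu}$, one has for every $g\in\scrS(\rz_+)$ the \emph{exact} identity $\spk{\xi^\prime,\mu}^{1/2}\,g(\spk{\xi^\prime,\mu}\,\cdot)=\kappa_\lambda g$. Hence if $k$ has the form of Theorem \ref{thm:poisson2nd}, then
$$\mathfrak{k}(x^\prime,\xi^\prime;\mu):=\kappa^{-1}(\xi^\prime,\mu)\,k(x^\prime,\cdot,\xi^\prime;\mu)$$
(a Schwartz function, since the standard group action restricts to $\scrS(\rz_+)$) is precisely the amplitude of that theorem, and conversely $k(x^\prime,\xi^\prime;\mu)=\kappa(\xi^\prime,\mu)\,\mathfrak{k}(x^\prime,\xi^\prime;\mu)$ recovers $k$ from any such $\mathfrak{k}$. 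Moreover, since the group action on $\cz$ is trivial and $\scrL(\cz,F)\cong F$ isometrically for any Hilbert space $F$, \eqref{eq:abc} is precisely the assertion that $k\in\bigcap_{s,\delta}S^{d+\frac12}_{1,0}(\rz^{n-1}\times\rpbar;\cz,H^{s,\delta}(\rz_+))$, i.e.\ a uniform bound on the $H^{s,\delta}(\rz_+)$-norms of the $\kappa^{-1}(\xi^\prime,\mu)$-conjugated $(\xi^\prime,x^\prime,\mu)$-derivatives of $k$. Via $k=\kappa(\xi^\prime,\mu)\mathfrak{k}$ this will turn out to be equivalent to the $\mathfrak{S}^{d}_{1,0}(\rz^{n-1}\times\rpbar;\scrS(\rz_+))$-estimates for $\mathfrak{k}$, which together with Theorem \ref{thm:poisson2nd} proves the claim.

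The bridge between the two descriptions is the interplay of $\partial_{\xi^\prime}$ and $\partial_\mu$ with the $(\xi^\prime,\mu)$-dependent rescaling. The infinitesimal generator of the standard group action is the \emph{fixed} operator $\theta=\tfrac12+t\,\partial_t$ on $\scrS(\rz_+)$: it maps $\scrS(\rz_+)$ continuously into itself, commutes with every $\kappa_\lambda$, and satisfies $\partial_\lambda\kappa_\lambda=\lambda^{-1}\theta\kappa_\lambda$. Differentiating $k(x^\prime,\xi^\prime;\mu)=\kappa_{\spk{\xi^\prime,\mu}}\mathfrak{k}(x^\prime,\xi^\prime;\mu)$ by the product and chain rules, each $\partial_{\xi^\prime_i}$ (respectively $\partial_\mu$) either falls on $\mathfrak{k}$ or produces the operator $\theta$ multiplied by the scalar $\xi^\prime_i/\spk{\xi^\prime,\mu}^{2}$ (respectively $\mu/\spk{\xi^\prime,\mu}^{2}$), a classical symbol in $(\xi^\prime,\mu)$ of order $-1$. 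Iterating, $\kappa^{-1}(\xi^\prime,\mu)\,D^\alpha_{\xi^\prime}D^\beta_{x^\prime}D^j_\mu k$ is a finite linear combination of terms $c(\xi^\prime,\mu)\,\theta^{m}\,D^{\alpha^\prime}_{\xi^\prime}D^\beta_{x^\prime}D^{j^\prime}_\mu\mathfrak{k}$ with $\alpha^\prime\le\alpha$, $j^\prime\le j$, $m\le(|\alpha|-|\alpha^\prime|)+(j-j^\prime)$ and $|D^\gamma_{(\xi^\prime,\mu)}c|\preceq\spk{\xi^\prime,\mu}^{-(|\alpha|-|\alpha^\prime|)-(j-j^\prime)-|\gamma|}$. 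Since $\theta^m$ is continuous on $\scrS(\rz_+)$ and $\scrS(\rz_+)\hookrightarrow H^{s,\delta}(\rz_+)$ continuously for all $s,\delta$, the defining estimates of $\mathfrak{S}^{d}_{1,0}(\scrS(\rz_+))$ give $\|\theta^{m}D^{\alpha^\prime}_{\xi^\prime}D^\beta_{x^\prime}D^{j^\prime}_\mu\mathfrak{k}\|_{H^{s,\delta}}\preceq\spk{\xi^\prime,\mu}^{d-|\alpha^\prime|-j^\prime}$, and summation yields \eqref{eq:abc}; this settles the ``only if'' implication.

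For the converse one performs the same computation with $\kappa^{-1}$ in place of $\kappa$, using $\partial_\lambda\kappa^{-1}_\lambda=-\lambda^{-1}\theta\kappa^{-1}_\lambda$ and $\theta\kappa^{-1}_\lambda=\kappa^{-1}_\lambda\theta$: differentiating $\mathfrak{k}=\kappa^{-1}(\xi^\prime,\mu)k$ shows that $D^\alpha_{\xi^\prime}D^\beta_{x^\prime}D^j_\mu\mathfrak{k}$ is a finite sum of terms $c(\xi^\prime,\mu)\,\theta^{m}\big(\kappa^{-1}(\xi^\prime,\mu)\,D^{\alpha^\prime}_{\xi^\prime}D^\beta_{x^\prime}D^{j^\prime}_\mu k\big)$ with $c$ as above. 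Here one uses three elementary properties of the half-line Sobolev scale of Section \ref{sec:app01}: every continuous seminorm of $\scrS(\rz_+)$ is dominated by some $\|\cdot\|_{H^{s_0,\delta_0}(\rz_+)}$ (equivalently, $\scrS(\rz_+)=\bigcap_{s,\delta}H^{s,\delta}(\rz_+)$ as Fr\'echet spaces); $\theta^{m}$ maps $H^{s_0+m,\delta_0+m}(\rz_+)$ boundedly into $H^{s_0,\delta_0}(\rz_+)$; and $\theta$ commutes with $\kappa^{-1}(\xi^\prime,\mu)$. Bounding $\|\kappa^{-1}(\xi^\prime,\mu)\,D^{\alpha^\prime}_{\xi^\prime}D^\beta_{x^\prime}D^{j^\prime}_\mu k\|_{H^{s_0+m,\delta_0+m}}$ by the hypothesis \eqref{eq:abc} and summing delivers the $\mathfrak{S}^{d}_{1,0}(\scrS(\rz_+))$-estimates for $\mathfrak{k}$; by Theorem \ref{thm:poisson2nd}, $k$ is then a strongly parameter-dependent Poisson symbol-kernel of order $d+\tfrac12$.

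The one genuinely technical point — which I would write out in detail — is the iterated-differentiation bookkeeping underlying the two central steps: verifying that the coefficients $c(\xi^\prime,\mu)$ generated are indeed classical $(\xi^\prime,\mu)$-symbols of the stated negative orders (so that further differentiation only improves decay), and that the number of lost powers of $\spk{\xi^\prime,\mu}$ matches exactly the total number of co-variable and parameter derivatives, irrespective of how many copies of the generator $\theta$ arise — the point being that $\theta$, a fixed continuous operator on $\scrS(\rz_+)$, carries no order in $(\xi^\prime,\mu)$. Everything else is routine manipulation of the standard group action and of the weighted Sobolev spaces on the half-line.
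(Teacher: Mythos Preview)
Your proof is correct and follows essentially the same approach as the paper. Both arguments reduce to Theorem \ref{thm:poisson2nd} and compute the interplay of the $(\xi^\prime,\mu)$-derivatives with the rescaling $x_n\mapsto\spk{\xi^\prime,\mu}x_n$ via the chain rule; the paper tracks the resulting terms explicitly as $p_{i,j^\prime,\alpha^\prime}(\xi^\prime;\mu)\spk{\xi^\prime,\mu}^{1/2}(D^\beta_{x^\prime}(x_nD_{x_n})^iD^{\alpha^{\prime\prime}}_{\xi^\prime}D^{j^{\prime\prime}}_\mu\mathfrak{k})(\cdot,\spk{\xi^\prime,\mu}x_n)$ with scalar symbols $p_{i,j^\prime,\alpha^\prime}$ of the appropriate negative order, whereas you package the same information more cleanly via the infinitesimal generator $\theta=\tfrac12+t\partial_t$ of the standard group action and the identity $\partial_\lambda\kappa_\lambda=\lambda^{-1}\theta\kappa_\lambda$. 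Since $\theta$ and $x_nD_{x_n}$ differ by a scalar, the two bookkeepings are equivalent; your formulation has the minor advantage of making the commutation $\theta\kappa_\lambda=\kappa_\lambda\theta$ transparent, which is exactly what allows you to pull $\kappa^{-1}(\xi^\prime,\mu)$ through in both directions without further computation.
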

\begin{proof}
	Let us first start out from a Poisson symbol-kernel. 
	Then, by Theorem \ref{thm:poisson2nd} and chain-rule, 
	$D^\alpha_{\xi^\prime}D^\beta_{x^\prime}D^j_\mu k$ is a linear-combination of terms of the form 
	 $$p_{i,j^\prime,\alpha^\prime}(\xi^\prime;\mu)\spk{\xi^\prime,\mu}^{1/2}
	 (D^\beta_{x^\prime}(x_nD_{x_n})^iD^{\alpha^{\prime\prime}}_{\xi^\prime}
	   D^{j^{\prime\prime}}_\mu \mathfrak{k})(x^\prime,\xi^\prime,\spk{\xi^\prime,\mu}x_n;\mu)$$ 
    with $i+j^\prime+j^{\prime\prime}+|\alpha^{\prime}|+|\alpha^{\prime\prime}|=|\alpha|+j$ and 
	symbols $p_{i,j^\prime,\alpha^\prime}\in S^{-i-j^\prime-|\alpha^\prime|}_{1,0}(\rz^{n-1}\times\rpbar)$. Therefore the left-hand side 
	of \eqref{eq:abc} can be estimated by a linear combination of terms 
	 $$\spk{\xi^\prime,\mu}^{-i-j^\prime-|\alpha^\prime|}\|D^\beta_{x^\prime}D^i_{x_n}
	 D^{\alpha^{\prime\prime}}_{\xi^\prime}D^{j^{\prime\prime}}_\mu\mathfrak{k}\|_{H^{s,\delta}(\rz_+)}.$$
	Then \eqref{eq:abc} follows from the fact that 
	$\mathfrak{k}\in \mathfrak{S}^{d+\frac{1}{2}}_{1,0}(\rz^{n-1}\times\rpbar;\scrS(\rz_+))$. 
	
	Now consider $k$ as in \eqref{eq:xyz} satisfying \eqref{eq:abc}, identified with 
	a function $k(x^\prime,x_n,\xi^\prime;\mu)$. Define $\mathfrak{k}=\kappa^{-1}(\xi^\prime,\mu)k$, i.e., 
	 $$\mathfrak{k}(x^\prime,x_n,\xi^\prime;\mu)=\spk{\xi^\prime,\mu}^{-1/2}
	   k(x^\prime,\xi^\prime,\spk{\xi^\prime,\mu}^{-1}x_n;\mu).$$
	Again using chain rule, $D^\alpha_{\xi^\prime}D^\beta_{x^\prime}D^j_\mu \mathfrak{k}$ is a linear combination of terms 
	 $$p_{i,j^\prime,\alpha^\prime}(\xi^\prime;\mu)(x_nD_{x_n})^i\kappa^{-1}(\xi^\prime,\mu)    
	   D^\beta_{x^\prime}D^i_{x_n}D^{\alpha^{\prime\prime}}_{\xi^\prime}
       D^{j^{\prime\prime}}_\mu {k}$$ 
    with indices and $p_{i,j^\prime,\alpha^\prime}$ as before. This implies 
    $\mathfrak{k}\in \mathfrak{S}^{d+\frac{1}{2}}_{1,0}(\rz^{n-1}\times\rpbar;\scrS(\rz_+))$, as desired.    
\end{proof}

Similarly we can treat weakly parameter-dependent Poisson operators$:$

\begin{theorem}\label{thm:charact-weakly}
	A function $\wt{k}$ from \eqref{eq:xyz} corresponds to a weakly parameter-dependent Poisson symbol-kernel of order $d+\frac{1}{2}$ and regularity $\nu$ if, and only if, 
	$$\|\kappa^{-1}(\xi^\prime)D^\alpha_{\xi^\prime}D^j_\mu \wt{k}(\xi^\prime;\mu)\|_{
	\scrL(\cz,\calK^{s,0}(\rz_+)^\rho)}
	\preceq \spk{\xi^\prime}^{\nu-|\alpha|}\spk{\xi^\prime,\nu}^{d+\frac{1}{2}-\nu-j}$$ 
	and 
	$$\|\kappa^{-1}(\xi^\prime,\mu)D^\alpha_{\xi^\prime}D^j_\mu \wt{k}(\xi^\prime;\mu)\|_{\scrL(\cz,H^{s}(\rz_+))}
	\preceq \spk{\xi^\prime}^{\nu-|\alpha|}\spk{\xi^\prime,\nu}^{d+\frac{1}{2}-\nu-j}$$ 
	for every $s$, $\rho$ and all orders of derivatives, where $\kappa$ is the standard group-action from Example $\ref{ex:standard-group-action}$. 
\end{theorem}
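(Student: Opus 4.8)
The plan is to mimic the structure of the proof of Theorem~\ref{thm:charact-strongly}, but now working with the two scales of weighted Sobolev spaces on the half-axis that appear in the previous subsection, and exploiting the two representations of a weakly parameter-dependent Poisson symbol-kernel established there. Concretely, recall that by the structure theorem for weakly parameter-dependent Poisson symbol-kernels we may write
\begin{align*}
 \wt{k}(x^\prime,x_n,\xi^\prime;\mu)
 &=\spk{\xi^\prime}^{1/2}\,\wt{\mathfrak{k}}_1(x^\prime,\xi^\prime,\mu;\spk{\xi^\prime}x_n),\\
 \wt{k}(x^\prime,x_n,\xi^\prime;\mu)
 &=\spk{\xi^\prime,\mu}^{1/2}\,\wt{\mathfrak{k}}_2(x^\prime,\xi^\prime,\mu;\spk{\xi^\prime,\mu}x_n),
\end{align*}
with $\wt{\mathfrak{k}}_1\in \wt{\mathfrak{S}}^{d,\nu}_{1,0}(\scrS^0(\rz_+))$ and $\wt{\mathfrak{k}}_2\in \wt{\mathfrak{S}}^{d,\nu}_{1,0}(H^\infty(\rz_+))$. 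The first representation, which uses the dilation by $\spk{\xi^\prime}$, is the natural one for the $\calK^{s,0}(\rz_+)^\rho$-estimate, because the group-action $\kappa(\xi^\prime)=\kappa_{\spk{\xi^\prime}}$ exactly undoes the rescaling in $\wt{\mathfrak{k}}_1$; the second representation, dilating by $\spk{\xi^\prime,\mu}$, is the natural one for the $H^s(\rz_+)$-estimate and the group-action $\kappa(\xi^\prime,\mu)=\kappa_{\spk{\xi^\prime,\mu}}$.

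First I would treat the $\calK^{s,0}(\rz_+)^\rho$-estimate. Applying $\kappa^{-1}(\xi^\prime)$ to $\wt{k}$ in the first representation produces, up to the harmless factor $\spk{\xi^\prime}^{1/2}\spk{\xi^\prime}^{-1/2}=1$ coming from the unitary normalisation, the undilated kernel $\wt{\mathfrak{k}}_1(x^\prime,\xi^\prime,\mu;\cdot)$ viewed as an element of $\scrL(\cz,\scrS^0(\rz_+))$, hence in particular of $\scrL(\cz,\calK^{s,0}(\rz_+)^\rho)$ since $\scrS^0(\rz_+)\hookrightarrow\calK^{s,0}(\rz_+)^\rho$ for every $s,\rho$. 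Differentiating in $\xi^\prime,x^\prime,\mu$ and using the chain rule, $\kappa^{-1}(\xi^\prime)D^\alpha_{\xi^\prime}D^\beta_{x^\prime}D^j_\mu\wt{k}$ becomes a linear combination of terms $p_{i,\alpha^\prime}(\xi^\prime)\,(x_nD_{x_n})^i D^\beta_{x^\prime}D^{\alpha^{\prime\prime}}_{\xi^\prime}D^j_\mu\wt{\mathfrak{k}}_1$ evaluated at the undilated argument, where $p_{i,\alpha^\prime}\in S^{-i-|\alpha^\prime|}_{1,0}(\rz^{n-1}\times\rpbar)$ (these are exactly the symbols produced when differentiating $\spk{\xi^\prime}^{1/2}$ and the rescaling $\spk{\xi^\prime}x_n$) and $i+|\alpha^\prime|+|\alpha^{\prime\prime}|=|\alpha|$. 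Since $(x_nD_{x_n})^i$ is a continuous operator on $\scrS^0(\rz_+)$, hence bounded into $\calK^{s,0}(\rz_+)^\rho$, the $\wt{\mathfrak{S}}^{d,\nu}_{1,0}$-estimate on $\wt{\mathfrak{k}}_1$ yields exactly the bound $\spk{\xi^\prime}^{\nu-|\alpha|}\spk{\xi^\prime,\mu}^{d+\frac12-\nu-j}$ claimed (noting the harmless shift of order by $\tfrac12$ coming from the $\spk{\xi^\prime}^{1/2}$-prefactor). Conversely, starting from a function $\wt{k}$ as in~\eqref{eq:xyz} satisfying the $\calK$-estimate, one defines $\wt{\mathfrak{k}}_1:=\spk{\xi^\prime}^{-1/2}\wt{k}(x^\prime,\xi^\prime,\mu;\spk{\xi^\prime}^{-1}x_n)$, i.e.\ $\wt{\mathfrak{k}}_1=\kappa^{-1}(\xi^\prime)\wt{k}$ after the identification, and runs the same chain-rule computation in reverse to see that $\wt{\mathfrak{k}}_1$ lies in $\wt{\mathfrak{S}}^{d,\nu}_{1,0}(\scrS^0(\rz_+))$, which by the structure theorem characterises $\wt{k}$ as a weakly parameter-dependent Poisson symbol-kernel.

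For the $H^s(\rz_+)$-estimate I would repeat the argument verbatim with the second representation: applying $\kappa^{-1}(\xi^\prime,\mu)$ and using the chain rule now produces symbols $p_{i,\alpha^\prime}\in S^{-i-|\alpha^\prime|}_{1,0}$ together with $(x_nD_{x_n})^iD^\beta_{x^\prime}D^{\alpha^{\prime\prime}}_{\xi^\prime}D^j_\mu\wt{\mathfrak{k}}_2$ evaluated at the undilated point, and continuity of $(x_nD_{x_n})^i$ on $H^\infty(\rz_+)$ together with $H^\infty(\rz_+)\hookrightarrow H^s(\rz_+)$ gives the desired bound. The main obstacle — and the only place where a little care beyond bookkeeping is needed — is making sure the two estimates together are \emph{equivalent} to $\wt{k}$ being a weakly parameter-dependent Poisson symbol-kernel, not merely implied by it: one must check that neither estimate alone suffices (e.g.\ the $\calK$-estimate controls the behaviour of $x_n^\ell D_{x_n}^{\ell^\prime}\wt{k}$ only for $\ell\ge\ell^\prime$, and the $H^s$-estimate only the pure-$D_{x_n}$-derivatives), and that jointly they reproduce precisely the two families of estimates~\eqref{eq:estimate3} and~\eqref{eq:estimate5} that were shown in the previous theorem to be equivalent to the defining estimates~\eqref{eq:poisson-kernel}. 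This is exactly the content of the interpolation (H\"older-inequality) step already carried out there, so the proof reduces to invoking that equivalence together with the two chain-rule computations above; I do not expect any essential difficulty, only the need to keep the roles of $\spk{\xi^\prime}$ and $\spk{\xi^\prime,\mu}$ — and correspondingly of the two group-actions — carefully separated.
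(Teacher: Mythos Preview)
Your overall strategy is exactly what the paper has in mind: the paper does not spell out a proof but only says ``Similarly we can treat weakly parameter-dependent Poisson operators'', referring back to the proof of Theorem~\ref{thm:charact-strongly} and the structure theorem for weakly parameter-dependent Poisson symbol-kernels. Your plan to run two chain-rule computations, one with $\kappa(\xi^\prime)$ and $\wt{\mathfrak{k}}_1\in\wt{\mathfrak{S}}^{d,\nu}_{1,0}(\scrS^0(\rz_+))$ for the $\calK^{s,0}(\rz_+)^\rho$-estimate and one with $\kappa(\xi^\prime,\mu)$ and $\wt{\mathfrak{k}}_2\in\wt{\mathfrak{S}}^{d,\nu}_{1,0}(H^\infty(\rz_+))$ for the $H^s(\rz_+)$-estimate, is the right organising principle, and your remark that the two estimates jointly recover precisely \eqref{eq:estimate3} and \eqref{eq:estimate5} is on the mark.

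There is, however, one concrete error in the $H^s$-part. You claim ``continuity of $(x_nD_{x_n})^i$ on $H^\infty(\rz_+)$'', but this is false: multiplication by $x_n$ is unbounded on $H^s(\rz_+)$ (take for instance $u(x_n)=\sin(x_n)/x_n$ for large $x_n$, smoothed near the origin; then $u\in H^\infty(\rz_+)$ while $x_nD_{x_n}u\sim\cos(x_n)\notin L^2(\rz_+)$). So the chain-rule terms $(x_nD_{x_n})^iD^{\alpha^{\prime\prime}}_{\xi^\prime}D^{j^{\prime\prime}}_\mu\wt{\mathfrak{k}}_2$ cannot be estimated in $H^s$ from the $\wt{\mathfrak{S}}^{d,\nu}_{1,0}(H^\infty(\rz_+))$-information on $\wt{\mathfrak{k}}_2$ alone. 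The fix is not to treat the two representations independently in the forward direction either, but to use them jointly: since $x_nD_{x_n}$ commutes with $\kappa_\lambda$ and $\wt{\mathfrak{k}}_2=\kappa_{\spk{\xi^\prime}/\spk{\xi^\prime,\mu}}\wt{\mathfrak{k}}_1$, one has $(x_nD_{x_n})^i\wt{\mathfrak{k}}_2=\kappa_{\spk{\xi^\prime}/\spk{\xi^\prime,\mu}}\big[(x_nD_{x_n})^i\wt{\mathfrak{k}}_1\big]$, and now $(x_nD_{x_n})^i$ \emph{is} continuous on $\scrS^0(\rz_+)$, while $\kappa_\lambda$ with $\lambda=\spk{\xi^\prime}/\spk{\xi^\prime,\mu}\le 1$ is bounded on $H^s(\rz_+)$ for $s\ge 0$; combined with the gain $\spk{\xi^\prime,\mu}^{-i}$ from the accompanying symbol $p_{i,j^\prime,\alpha^\prime}$, this yields the required bound. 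The same mixing is needed in the converse direction when you differentiate $\wt{\mathfrak{k}}_2=\kappa^{-1}(\xi^\prime,\mu)\wt{k}$. Apart from this, your argument is correct.
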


\subsection{Homogeneous Poisson symbol-kernels}

A Poisson symbol-kernel which is strictly homogeneous of degree $d-\frac{1}{2}$ and has regularity 
$\nu$ (in the sense of Grubb) is a kernel $k(x^\prime,x_n,\xi^\prime,\mu)$, defined for $\xi^\prime\not=0$ only, satisfying  
\begin{equation}\label{eq:poisson-hom}
 k(x^\prime,x_n/\lambda,\lambda\xi^\prime;\lambda\mu)=\lambda^{d+\frac{1}{2}} k(x^\prime,x_n,\xi^\prime;\mu)\qquad \forall\,\lambda>0, 
\end{equation} 
and  
$$\|D^{\beta}_{x^\prime}D^{\alpha}_{\xi^\prime}D^j_\mu x_n^\ell D_{x_n}^{\ell^\prime}k\|_{L_2(\rz_+)}
\preceq \Big[\Big(\frac{|\xi^\prime|}{|\xi^\prime,\mu|}\Big)^{\nu-[\ell-\ell^\prime]_+-|\alpha|}+1\Big]
|\xi^\prime,\mu|^{d-\ell+\ell^\prime-|\alpha|-j}$$
for every order of derivatives and every $\ell$. 

We can repeat the above discussion and introduce strongly parameter-dependent homogeneous kernels 
by requiring the estimates 
$$\|D^{\beta}_{x^\prime}D^{\alpha}_{\xi^\prime}D^j_\mu x_n^\ell D_{x_n}^{\ell^\prime}k\|_{L_2(\rz_+)}
\preceq |\xi^\prime,\mu|^{d-\ell+\ell^\prime-|\alpha|-j}$$
$($and $k$ being defined for $(\xi^\prime,\mu)\not=0)$ and weakly parameter-dependent homogeneous kernels by requiring  
$$\|D^{\beta}_{x^\prime}D^{\alpha}_{\xi^\prime}D^j_\mu x_n^\ell D_{x_n}^{\ell^\prime}k\|_{L_2(\rz_+)}
\preceq \Big(\frac{|\xi^\prime|}{|\xi^\prime,\mu|}\Big)^{\nu-[\ell-\ell^\prime]_+-|\alpha|}
|\xi^\prime,\mu|^{d-\ell+\ell^\prime-|\alpha|-j}.$$

Then Theorem \ref{thm:charact-strongly} and Theorem \ref{thm:charact-weakly} have a corresponding version in the homogeneous setting $($using an identification with an operator-valued function as described in the beginning of Section \ref{sec:opval-poisson}$)$: Strongly homogeneous kernels are characterized by the estimates 
\begin{equation*}\label{eq:hom-abc}
\|\kappa^{-1}_{|\xi^\prime,\mu|} D^\alpha_{\xi^\prime}D^\beta_{x^\prime}D^j_\mu 
k\|_{\scrL(\cz,H^{s,\delta}(\rz_+))}
\preceq |\xi^\prime,\mu|^{d-|\alpha|-j}; 
\end{equation*}
weakly homogeneous kernels are characterized by simultaneous validity of the estimates 
 $$\|\kappa^{-1}_{|\xi^\prime|}D^\alpha_{\xi^\prime}D^j_\mu \wt{k}\|_{
   \scrL(\cz,\calK^{s,0}(\rz_+)^\rho)}
   \preceq |\xi^\prime|^{\nu-|\alpha|}|\xi^\prime,\nu|^{d-\nu-j}$$ 
and 
 $$\|\kappa^{-1}_{|\xi^\prime,\mu|}D^\alpha_{\xi^\prime}D^j_\mu   
   \wt{k}\|_{\scrL(\cz,H^{s}(\rz_+))}
   \preceq |\xi^\prime|^{\nu-|\alpha|}|\xi^\prime,\nu|^{d-\nu-j}.$$ 
Note that the homogeneity relation \eqref{eq:poisson-hom} then can be rephrased as 
\begin{equation}\label{eq:poisson-hom2}
 k(x^\prime,\lambda\xi^\prime;\lambda\mu)
 =\lambda^{d}\kappa_\lambda  k(x^\prime,\xi^\prime;\mu)\qquad \forall\,\lambda>0. 
\end{equation}  

\section{The algebra of singular Green symbols}\label{sec:green}

\subsection{Other classes of operator-valued symbols}\label{sec:green01}
Let $E_0$ and $E_1$ be Hilbert spaces equipped with group actions $\kappa_0$ and $\kappa_1$, respectively. 

Recall from Definition \ref{def:weakly} that $\wt{S}^{d,\nu}_{1,0}(\rz^{n}\times\rpbar;E_0,E_1)_{\bfkappa,\bfkappa}$ 
denotes the space of all smooth $\scrL(E_0,E_1)$-valued functions $a$ satisfying the 
estimates 
\begin{equation}\label{eq:bfkappa}
\|\kappa^{-1}_1(\xi,\mu)\big\{D^\alpha_{\xi}D^\beta_{x}D^j_\mu
a(x,\xi;\mu)\big\}\kappa_0(\xi,\mu)\|_{\scrL(E_0,E_1)}
\preceq \spk{\xi}^{\nu-|\alpha|}\spk{\xi,\mu}^{d-\nu-j}.
\end{equation} 

However, in Theorem \ref{thm:charact-weakly} we have seen that naturally arise operator-valued symbols involving the group-action $\kappa(\xi^\prime)$ rather than $\kappa(\xi^\prime,\mu)$. In order to capture this feature on the level of operator-valued symbols we shall introduce 
three further symbol spaces, by modifying the above defining estimate \eqref{eq:bfkappa}$:$
\begin{itemize}
	\item[i$)$] Substituting $\kappa_0(\xi,\mu)$ by $\kappa_0(\xi)$ yields the space 
	 $$\wt{S}^{d,\nu}_{1,0}(\rz^{n}\times\rpbar;E_0,E_1)_{\kappa,\bfkappa}.$$
	\item[ii$)$] Substituting $\kappa_1^{-1}(\xi,\mu)$ by $\kappa_1^{-1}(\xi)$ yields the 
	space 
	$$\wt{S}^{d,\nu}_{1,0}(\rz^{n}\times\rpbar;E_0,E_1)_{\bfkappa,\kappa}.$$
	\item[iii$)$] Substituting both $\kappa_0(\xi,\mu)$ and $\kappa_1^{-1}(\xi,\mu)$ by 
	$\kappa_0(\xi)$ and $\kappa_1^{-1}(\xi)$, respectively, yields the space 
	$$\wt{S}^{d,\nu}_{1,0}(\rz^{n}\times\rpbar;E_0,E_1)_{\kappa,\kappa}.$$
\end{itemize}
Note the difference between $\bfkappa$ $($written in "boldface"$)$, indicating the use of $\kappa_j(\xi,\mu)$ for $j=0$ or $j=1$, and $\kappa$, indicating the use of $\kappa_j(\xi)$ 
for $j=0$ or $j=1$. 

Modifying in the analogous way Definition \ref{def:kappa-hom} of the space of $\kappa$-homogeneous 
functions $\wt{S}^{(d,\nu)}(\rz^{n}\times\rpbar;E_0,E_1)_{\bfkappa,\bfkappa}$, 
leads to the three spaces 
\begin{align*}
\wt{S}^{(d,\nu)}&(\rz^{n}\times\rpbar;E_0,E_1)_{\kappa,\bfkappa},\\ 
\wt{S}^{(d,\nu)}&(\rz^{n}\times\rpbar;E_0,E_1)_{\bfkappa,\kappa},\\ 
\wt{S}^{(d,\nu)}&(\rz^{n}\times\rpbar;E_0,E_1)_{\kappa,\kappa}.
\end{align*}
Correspondingly, there are three classes of classical symbols, denoted by 
$$\wt{S}^{d,\nu}(\rz^{n}\times\rpbar;E_0,E_1)_{\kappa,\bfkappa},\quad 
\wt{S}^{d,\nu}(\rz^{n}\times\rpbar;E_0,E_1)_{\bfkappa,\kappa},\quad 
\wt{S}^{d,\nu}(\rz^{n}\times\rpbar;E_0,E_1)_{\kappa,\kappa}.$$ 
In any of the three cases we have the notion of homogeneous principal symbol, defined by 
$$a^{(d,\nu)}(x,\xi;\mu)=\lim_{\lambda\to+\infty}
\lambda^{-d}\kappa_{1,\lambda}^{-1}a(x,\lambda\xi;\lambda\mu)\kappa_{0,\lambda}.$$

The calculus for such symbols is pretty much the same as described in 
Section \ref{sec:oscillatory} but, 
additionally, one has to keep track of the indices $\kappa$ and $\bfkappa$, respectively. 

Concerning the Leibniz product of symbols $($respectively the composition of pseudodifferential operators$)$, one has to pay attention that the $\kappa$-subscripts "fit together"; for example,  
if $a_0\in \wt{S}^{d_0,\nu_0}(\rz^{n}\times\rpbar;E_0,E_1)_{\kappa,\bfkappa}$ and 
$a_1\in \wt{S}^{d_1,\nu_1}(\rz^{n}\times\rpbar;E_1,E_2)_{\bfkappa,\kappa}$, then 
$a_1\# a_0\in \wt{S}^{d_0+d_1,\nu_0+\nu_1}(\rz^{n}\times\rpbar;E_0,E_2)_{\kappa,\kappa}$. 

Taking the formal adjoint leads to an interchange of the two $\kappa$-indices; for example, 
if $a\in \wt{S}^{d,\nu}(\rz^{n}\times\rpbar;E_0,E_1)_{\kappa,\bfkappa}$ then
$a^{(*)}\in \wt{S}^{d,\nu}(\rz^{n}\times\rpbar;E_1,E_0)_{\bfkappa,\kappa}$. 

Also asymptotic summations can be performed in any of these classes. 

\subsection{Singular Green symbols of type $0$}\label{sec:green02}

Below we shall work with various Hilbert spaces of the form $\calE(\rz_+)\oplus\cz^N$ with 
$\calE(\rz_+)\subset\scrD^\prime(\rz_+)$. On such a space we shall consider the 
standard group-action, extended trivially to $\cz^N$, i.e.,  
$$\kappa_\lambda(u\oplus z)=(\kappa_\lambda u)\oplus z,\qquad 
  u\in\calE(\rz_+),\; z\in\cz^N.$$ 

First we focus on symbols of type $($also called "class" in the literature$)$ $0$. 

Strongly parameter-dependent symbols coincide with the parameter-dependent Green-symbols 
of Schulze's version of the parameter-dependent Boutet de Monvel algebra, cf., for instance, \cite[Section 2]{ScSc1}. The following definition uses the formalism of \cite[Section 3.2]{Schr01}. 

\begin{definition}\label{def:b}
	$\calB^{d;0}_G(\rz^{n-1}\times\rpbar;(L,M),(L^\prime,M^\prime))$ denotes the space 
	$$\mathop{\mbox{\Large$\cap$}}_{s,s^\prime,\delta,\delta^\prime\in\rz} 
	S^d(\rz^{n-1}\times\rpbar;H^{s,\delta}_0(\rpbar,\cz^{L})\oplus\cz^{M},
	H^{s^\prime,\delta^\prime}(\rz_+,\cz^{L^\prime})\oplus\cz^{M^\prime}).$$ 
    For convenience, we shall use the short-hand notation 
    $\calB^{d;0}_G((L,M),(L^\prime,M^\prime))$. 
\end{definition}

The space 
  $$\calB^{(d);0}_G(\rz^{n-1}\times\rpbar;(L,M),(L^\prime,M^\prime))
    =\calB^{(d);0}_G((L,M),(L^\prime,M^\prime))$$ 
of strongly $\kappa$-homogeneous symbols is definied analogously 
by taking the intersection over the corresponding spaces of homogeneous symbols. 


The weakly parameter-dependent class is defined as follows$:$

\begin{definition}\label{def:btilde}
	Denote by $\wt\calB^{d,\nu;0}_G(\rz^{n-1}\times\rpbar;(L,M),(L^\prime,M^\prime))$ 
	the space of all symbols that, for every choice of $s,s^\prime,\delta,\delta^\prime$, belong 
	to each of the following four spaces$:$
	\begin{align*}
	\wt{S}^{d,\nu}&(\rz^{n-1}\times\rpbar;H^{s}_0(\rpbar,\cz^{L})\oplus\cz^{M},
	H^{s^\prime}(\rz_+,\cz^{L^\prime})\oplus\cz^{M^\prime})_{\bfkappa,\bfkappa},\\ 
	\wt{S}^{d,\nu}&(\rz^{n-1}\times\rpbar;H^{s}_0(\rpbar,\cz^{L})\oplus\cz^{M},
	\calK^{s^\prime,0}(\rz_+,\cz^{L^\prime})^{\delta^\prime}\oplus
	\cz^{M^\prime})_{\bfkappa,\kappa},\\ 
	\wt{S}^{d,\nu}&(\rz^{n-1}\times\rpbar;\calK^{s,0}(\rz_+,\cz^{L})^\delta\oplus\cz^{M},
	H^{s^\prime}(\rz_+,\cz^{L^\prime})\oplus\cz^{M^\prime})_{\kappa,\bfkappa},\\ 
	\wt{S}^{d,\nu}&(\rz^{n-1}\times\rpbar;\calK^{s,0}(\rz_+,\cz^{L})^\delta\oplus\cz^{M},
	\calK^{s^\prime,0}(\rz_+,\cz^{L^\prime})^{\delta^\prime}\oplus\cz^{M^\prime})_{\kappa,\kappa}.  
	\end{align*}
    For convenience, we shall use the short-hand notation 
    $\wt\calB^{d,\nu;0}_G((L,M),(L^\prime,M^\prime))$. 
\end{definition}

Again, in the obvious way, we can define the corresponding space 
 $$\wt\calB^{(d,\nu);0}_G(\rz^{n-1}\times\rpbar;(L,M),(L^\prime,M^\prime))
   =\wt\calB^{(d,\nu);0}_G(\rz^{n-1}\times\rpbar;(L,M),(L^\prime,M^\prime))$$  
of weakly $\kappa$-homogeneous symbols. 

\begin{remark}
	Recall that the $L^2$-inner product allows to identify $H^{-s,-\delta}_0(\rpbar)$ with the 
	dual space of $H^{s,\delta}(\rz_+)$ and $\calK^{-s,0}(\rz_+)^{-\delta}$ with the dual space of 
	$\calK^{s,0}(\rz_+)^{\delta}$ for every choice of $s,\delta\in\rz$. 
	Hence it is clear from the definition of the symbol spaces, that taking pointwise the adjoint induces maps 
	\begin{align*} 
	\calB^{d;0}_G((L,M),(L^\prime,M^\prime)) 
	 & \to \calB^{d;0}_G((L^\prime,M^\prime),(L,M))\\
	\wt\calB^{d,\nu;0}_G((L,M),(L^\prime,M^\prime)) 
	 & \to \wt\calB^{d,\nu;0}_G((L^\prime,M^\prime),(L,M)). 
	\end{align*}
\end{remark}

Next we shall introduce the class of singular Green symbols of finite regularity$:$

\begin{definition}
	Denote by 
	$$\calB^{d,\nu;0}_G(\rz^{n-1}\times\rpbar;(L,M),(L^\prime,M^\prime))
	=\calB^{d,\nu;0}_G((L,M),(L^\prime,M^\prime))$$
	the $($non-direct$)$ sum 
	$$\calB^{d;0}_G((L,M),(L^\prime,M^\prime))+\wt\calB^{d,\nu;0}_G((L,M),(L^\prime,M^\prime)).$$ 
\end{definition}

Given an operator $A:E_0\oplus E_1\to F_0\oplus F_1$ acting in between direct sums of ceratin spaces, 
we may write $A$ in block-matrix form, i.e.,  
 $$A=\begin{pmatrix}A_{11}&A_{12}\\A_{21}& A_{22}\end{pmatrix}: 
          \begin{matrix}E_0\\ \oplus\\ E_1\end{matrix}
          \lra
          \begin{matrix}F_0\\ \oplus\\ F_1\end{matrix}.$$
In this sense, we may represent any symbol  
$\bfg\in \calB^{d,\nu;0}_G((L,M),(L^\prime,M^\prime))$ in block-matrix form, i.e., 
$$\bfg(x^\prime,\xi^\prime;\mu)
=\begin{pmatrix}
g(x^\prime,\xi^\prime;\mu) & k(x^\prime,\xi^\prime;\mu)\\
t(x^\prime,\xi^\prime;\mu) & q(x^\prime,\xi^\prime;\mu)
\end{pmatrix};
$$
then $g$ is called a singular Green symbol, $k$ is a Poisson symbol, and $t$ is a trace symbol. 
Clearly, $q$ is a usual $($matrix-valued$)$ pseudodifferential symbol.

The following observation is crucial for the calculus.  

\begin{proposition}\label{prop:incl}
	We have the inclusion 
	$$\calB^{d;0}_G((L,M),(L^\prime,M^\prime))\subset
	\wt\calB^{d,0;0}_G((L,M),(L^\prime,M^\prime))$$
and the analogous inclusion for the spaces of $\kappa$-homogeneous symbols.  
\end{proposition}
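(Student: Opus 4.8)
The plan is to show that any strongly parameter-dependent Green symbol $\bfg\in\calB^{d;0}_G((L,M),(L',M'))$ lies in each of the four operator-valued symbol spaces listed in Definition~\ref{def:btilde} with regularity $\nu=0$. By definition, $\bfg$ belongs to $S^d(\rz^{n-1}\times\rpbar;H^{s,\delta}_0(\rpbar,\cz^L)\oplus\cz^M,H^{s',\delta'}(\rz_+,\cz^{L'})\oplus\cz^{M'})$ for all $s,s',\delta,\delta'$; that is, it satisfies the strong estimate \eqref{eq:symb01} with $E_0=H^{s,\delta}_0(\rpbar,\cz^L)\oplus\cz^M$ and $E_1=H^{s',\delta'}(\rz_+,\cz^{L'})\oplus\cz^{M'}$, both carrying the standard group-action. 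The first of the four target spaces, the $(\bfkappa,\bfkappa)$-space with $H^s_0$ and $H^{s'}$ as base spaces, is then immediate: take $\delta=\delta'=0$ and invoke the inclusion \eqref{eq:inclusion}, which gives $S^d\subseteq\wt S^{d,0}_{\bfkappa,\bfkappa}$. So the first bullet of Definition~\ref{def:btilde} follows directly from what is already proved.

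The remaining three cases require trading the $\spk{\xi',\mu}$-conjugation for a $\spk{\xi'}$-conjugation on one or both sides, moving between the scales $H^{s',\delta'}(\rz_+)$ and $\calK^{s',0}(\rz_+)^{\delta'}$ as needed. The key mechanism is the identity
$$\kappa_1^{-1}(\xi')\,X\,\kappa_0(\xi') = \kappa_{1,\spk{\xi'}/\spk{\xi',\mu}}\,\big(\kappa_1^{-1}(\xi',\mu)\,X\,\kappa_0(\xi',\mu)\big)\,\kappa_{0,\spk{\xi',\mu}/\spk{\xi'}},$$
valid for $X=D^\alpha_{\xi'}D^\beta_{x'}D^j_\mu\bfg$, together with the group-growth bound \eqref{eq:M-kappa}. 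Since $\spk{\mu}\spk{\xi',\mu}^{-1}\le\spk{\xi'}/\spk{\xi',\mu}\le 1$, conjugating by $\kappa_{\spk{\xi'}/\spk{\xi',\mu}}^{\pm1}$ costs at most a factor $(\spk{\xi',\mu}/\spk{\xi'})^M\preceq\spk{\xi'}^{-M}\spk{\xi',\mu}^{M}$; but because $\bfg\in S^d$ for \emph{every} $s,\delta$, we have room in the Sobolev indices to absorb this $M$-loss (a shift in $s$ or $\delta$) and still land in the desired target space. Concretely: to obtain the $(\kappa,\bfkappa)$-estimate I replace only $\kappa_0(\xi',\mu)$ by $\kappa_0(\xi')$, pick up a factor $\|\kappa_{0,\spk{\xi',\mu}/\spk{\xi'}}\|\preceq(\spk{\xi',\mu}/\spk{\xi'})^{M}$, and note that for a symbol valued in $\scrL(\,\cdot\,,H^{s',\delta'})$ one has the continuous embedding relating $\calK^{s,0}(\rz_+)^\delta$ to $H^{s,\delta}(\rz_+)$ on the domain side; choosing $s,\delta$ large enough relative to $M$ turns the excess $\spk{\xi'}^{-M}$-decay into the required $\spk{\xi'}^{0-|\alpha|}$-type estimate (regularity $\nu=0$, with the extra decay being harmless). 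The $(\bfkappa,\kappa)$-case is symmetric, replacing $\kappa_1^{-1}(\xi',\mu)$ by $\kappa_1^{-1}(\xi')$ on the range side, and $(\kappa,\kappa)$ combines both.

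The main technical point—and the step I expect to cost the most care—is matching up the two scales $H^{s,\delta}(\rz_+)$ and $\calK^{s,0}(\rz_+)^\delta$ correctly on the domain and range sides of the operator norm, and verifying that the $M$-dependent loss from \eqref{eq:M-kappa} is genuinely absorbable by the freedom in $(s,s',\delta,\delta')$ rather than producing an uncontrolled dependence. This hinges on the continuous inclusions between these Sobolev scales on the half-line (recorded in the appendix, Section~\ref{sec:app01}) and on the fact that a strongly parameter-dependent Green symbol maps into \emph{all} of them simultaneously, so passing from order $d$ in the $H^{s,\delta}$-picture to order $d$ (regularity $0$) in the $\calK^{s,0}$-picture only reshuffles indices. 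Once these embeddings are in place, each of the four membership claims reduces to a routine chain-rule computation on $\kappa_{\spk{\xi'}/\spk{\xi',\mu}}^{\pm1}$ and an application of \eqref{eq:M-kappa}, with continuous dependence on $\bfg$ following from the corresponding continuity of the defining seminorm estimates. The homogeneous version is proved identically, replacing $\spk{\,\cdot\,}$ by $|\,\cdot\,|$ throughout and using the strongly $\kappa$-homogeneous estimates in place of \eqref{eq:symb01}.
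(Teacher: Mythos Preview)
Your overall plan matches the paper's: write $\kappa_1^{-1}(\xi')=\kappa_{1,\spk{\xi',\mu}/\spk{\xi'}}\,\kappa_1^{-1}(\xi',\mu)$ (and similarly on the domain side), then control the transition factor and use the freedom in the Sobolev indices. But there is a genuine gap in how you propose to control that factor.

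You invoke the generic semigroup bound \eqref{eq:M-kappa}, which for $\lambda=\spk{\xi',\mu}/\spk{\xi'}\ge1$ yields a loss $(\spk{\xi',\mu}/\spk{\xi'})^{M}$. You then claim this can be ``absorbed'' by choosing $s,\delta$ large. This does not work as stated: the target estimate in Definition~\ref{def:btilde} with $\nu=0$ is
\[
\|\kappa_1^{-1}(\xi')\,D^\alpha_{\xi'}D^\beta_{x'}D^j_\mu\bfg\,\kappa_0(\xi',\mu)\|\preceq\spk{\xi'}^{-|\alpha|}\spk{\xi',\mu}^{d-j},
\]
and any surviving positive power of $\spk{\xi',\mu}/\spk{\xi'}$ violates this. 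Shifting the Sobolev index of the target space does not help, because the exponent $M$ in \eqref{eq:M-kappa} is itself a function of that space; for instance, the endomorphism norm of $\kappa_\lambda$ on $H^{k,k}(\rz_+)$ grows like $\lambda^{k}$ for $\lambda\ge1$, so enlarging $k$ makes the loss worse, not better.

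What the paper does instead is the missing idea: it computes a \emph{cross-space} bound for $\kappa_\lambda$, not an endomorphism bound. Using the explicit seminorms $\|t^{\ell}D_t^{\ell'}u\|_{L^2}$ (all $0\le\ell,\ell'\le k$ for $H^{k,k}(\rz_+)$, but only $\ell\ge\ell'$ for $\calK^{k,0}(\rz_+)^k$) and the identity $\|t^{\ell}D_t^{\ell'}\kappa_\lambda u\|_{L^2}=\lambda^{\ell'-\ell}\|t^{\ell}D_t^{\ell'}u\|_{L^2}$, one finds
\[
\|\kappa_\lambda\|_{\scrL(H^{k,k}(\rz_+),\,\calK^{k,0}(\rz_+)^k)}\le\max(1,\lambda^{-k}),
\]
which is $\le1$ for $\lambda\ge1$. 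So there is \emph{no} loss at all; the transition factor is harmless once one passes through $H^{k,k}$ and lands in $\calK^{k,0}(\rz_+)^k$, and then one embeds $\calK^{k,0}(\rz_+)^k\hookrightarrow\calK^{s',0}(\rz_+)^{\delta'}$ by choosing $k\ge\max(s',\delta')$. The $(\kappa,\bfkappa)$-case is handled by the dual bound $\|\kappa_\lambda\|_{\scrL(\calK^{-k,0}(\rz_+)^{-k},\,H^{-k,-k}_0(\rpbar))}\le\max(1,\lambda^{k})$, which is $\le1$ for $\lambda=\spk{\xi'}/\spk{\xi',\mu}\le1$. Replace your appeal to \eqref{eq:M-kappa} by this explicit cross-scale computation and the argument goes through.
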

\begin{proof}
	For simplicity of notation let $L=L^\prime=1$ and $M=M^\prime=0$; the general case is treated in the same way and involves only more lengthy notation. Let 
	$$\bfg\in \mathop{\mbox{\Large$\cap$}}_{s,s^\prime,\delta,\delta^\prime\in\rz} 
	S^d(H^{s,\delta}_0(\rpbar),H^{s^\prime,\delta^\prime}(\rz_+)).$$ 
	For $k\in\nz_0$, norms on $\calK^{k,0}(\rz_+)^k$ and 
	$H^{k,k}(\rz_+)$ are given by 
	$$\|u\|=\sum\limits_{\substack{0\le \ell^\prime\le k\\ \ell^\prime\le\ell\le k}}
	\|t^\ell D^{\ell^\prime}_t u\|_{L^2(\rz_+)},\qquad  
	\|v\|=\sum\limits_{0\le \ell,\ell^\prime\le k}
	\|t^\ell D^{\ell^\prime}_t v\|_{L^2(\rz_+)},$$
	respectively; in particular, $H^{k,k}(\rz_+)\hookrightarrow \calK^{k,0}(\rz_+)^k$. 
	Using these norms, it is easy to see that 
	$$\|\kappa_\lambda\|^{(k)}:=\|\kappa_\lambda\|_{\scrL(H^{k,k}(\rz_+),\calK^{k,0}(\rz_+)^k)}
	\le \max(1,\lambda^{-k}).$$
	By duality it then follows that $\calK^{-k,0}(\rz_+)^{-k}\hookrightarrow H^{-k,-k}_0(\rpbar)$ and 
	$$\|\kappa_\lambda\|^{(-k)}:=\|\kappa_\lambda\|_{
		\scrL(\calK^{-k,0}(\rz_+)^{-k},H^{-k,-k}_0(\rpbar))}
	\le\max(1,\lambda^k).$$
	By assumption, 
	$\bfg\in \wt{S}^{d,0}(H^{s,\delta}_0(\rpbar),H^{k,k}(\rz_+))_{\bfkappa,\bfkappa}$. 
	Therefore 
	\begin{align*}
	\|\kappa^{-1}(\xi^\prime)&\{D^{\alpha}_{\xi^\prime}D^{\beta}_{x^\prime}D^j_\mu \bfg(x^\prime,\xi^\prime;\mu)\}\kappa(\xi^\prime,\mu)
	\|_{\scrL(H^{s,\delta}_0(\rpbar),\calK^{k,0}(\rz^+)^k)} \\
	\preceq & \|\kappa_{\spk{\xi^\prime,\mu}/\spk{\xi^\prime}}\|^{(k)}\cdot\\
	&\cdot\|\kappa^{-1}(\xi^\prime,\mu)\{D^{\alpha}_{\xi^\prime}D^{\beta}_{x^\prime}D^j_\mu \bfg(x^\prime,\xi^\prime;\mu)\}\kappa(\xi^\prime,\mu)
	\|_{\scrL(H^{s,\delta}_0(\rpbar),H^{k,k}(\rz^+))}\\
	\preceq & \spk{\xi^\prime}^{-|\alpha|}\spk{\xi^\prime,\mu}^{d-j},
	\end{align*}
	since $\spk{\xi^\prime,\mu}/\spk{\xi^\prime}\ge 1$. 
	Choosing $k\ge \max(s^\prime,\delta^\prime)$ we conclude that 
	$\bfg$ belongs to $\wt{S}^{d,0}_{1,0}(H^{s}_0(\rpbar),
	\calK^{s^\prime,0}(\rz_+)^{\delta^\prime})_{\bfkappa,\kappa}$. 
	
	Using $\spk{\xi^\prime}/\spk{\xi^\prime,\mu}\le 1$,  
	\begin{align*}
	\|\kappa^{-1}(\xi^\prime,\mu)&\{D^{\alpha}_{\xi^\prime}D^{\beta}_{x^\prime}D^j_\mu \bfg(x^\prime,\xi^\prime;\mu)\}\kappa(\xi^\prime)
	\|_{\scrL(\calK^{-k,0}(\rz^+)^{-k},H^{s^\prime}(\rz_+))} \\
	\preceq& 
	\|\kappa^{-1}(\xi^\prime,\mu)\{D^{\alpha}_{\xi^\prime}D^{\beta}_{x^\prime}D^j_\mu \bfg(x^\prime,\xi^\prime;\mu)\}\kappa(\xi^\prime,\mu)
	\|_{\scrL(H^{-k,-k}_0(\rz^+),H^{s^\prime}(\rpbar))}\cdot\\
	&\cdot\|\kappa_{\spk{\xi^\prime}/\spk{\xi^\prime,\mu}}\|^{(-k)}\\
	\preceq& \spk{\xi^\prime}^{-|\alpha|}\spk{\xi^\prime,\mu}^{d-j}. 
	\end{align*}
	Choosing $k$ large enough, it follows that 
	$\bfg\in\wt{S}^{d,0}_{1,0}(
	\calK^{s,0}(\rz_+)^{\delta},H^{s^\prime}(\rz_+))_{\kappa,\bfkappa}$. 
	
	Analogously one verifies that 
	$\bfg\in\wt{S}^{d,0}_{1,0}
	\calK^{s,0}(\rz_+)^{\delta},\calK^{s^\prime,0}(\rz_+)^{\delta^\prime})_{\kappa,\kappa}$. 
	
	Homogeneous components and corresponding expansions are treated similarly and yield the result for classical symbols. 
\end{proof}

\begin{corollary}
Due to Proposition $\ref{prop:incl}$,  
 $$\calB^{d,\nu;0}_G((L,M),(L^\prime,M^\prime))=
   \wt\calB^{d,\nu;0}_G((L,M),(L^\prime,M^\prime)),\qquad\nu\le0.$$
\end{corollary}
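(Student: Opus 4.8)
The plan is to split the asserted equality into its two inclusions; one holds for every $\nu$ by construction, and the other reduces to Proposition \ref{prop:incl} together with an elementary comparison of the scalar weights in the symbol estimates.

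First, $\wt\calB^{d,\nu;0}_G((L,M),(L^\prime,M^\prime))\subseteq\calB^{d,\nu;0}_G((L,M),(L^\prime,M^\prime))$ is immediate for every $\nu$, since by definition $\calB^{d,\nu;0}_G$ is the non-direct sum $\calB^{d;0}_G+\wt\calB^{d,\nu;0}_G$. For the reverse inclusion, as $\wt\calB^{d,\nu;0}_G$ is a vector space it suffices to show that $\calB^{d;0}_G((L,M),(L^\prime,M^\prime))\subseteq\wt\calB^{d,\nu;0}_G((L,M),(L^\prime,M^\prime))$ whenever $\nu\le0$; then $\calB^{d,\nu;0}_G=\calB^{d;0}_G+\wt\calB^{d,\nu;0}_G\subseteq\wt\calB^{d,\nu;0}_G$, which together with the inclusion noted above yields the identity. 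By Proposition \ref{prop:incl} we already have $\calB^{d;0}_G\subseteq\wt\calB^{d,0;0}_G$, so the whole matter is reduced to the inclusion $\wt\calB^{d,0;0}_G\subseteq\wt\calB^{d,\nu;0}_G$ for $\nu\le0$.

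This last inclusion follows at once from the elementary fact that, for $\nu\le0$, $\wt{S}^{d,0}_{1,0}(E_0,E_1)\subseteq\wt{S}^{d,\nu}_{1,0}(E_0,E_1)$ in each of the four variants (with index pairs $\bfkappa,\bfkappa$; $\bfkappa,\kappa$; $\kappa,\bfkappa$; $\kappa,\kappa$) occurring in Definition \ref{def:btilde}, and likewise for the corresponding classical and $\kappa$-homogeneous classes. Indeed, in all four variants the group-action factors flanking $D^\alpha_{\xi}D^\beta_{x}D^j_\mu a$ are identical for regularity $0$ and for regularity $\nu$; only the scalar bound on the right changes, from $\spk{\xi}^{-|\alpha|}\spk{\xi,\mu}^{d-j}$ to $\spk{\xi}^{\nu-|\alpha|}\spk{\xi,\mu}^{d-\nu-j}$, and since $\spk{\xi}\le\spk{\xi,\mu}$ and $\nu\le0$,
$$\spk{\xi}^{-|\alpha|}\spk{\xi,\mu}^{d-j}=\Big(\frac{\spk{\xi,\mu}}{\spk{\xi}}\Big)^{\nu}\spk{\xi}^{\nu-|\alpha|}\spk{\xi,\mu}^{d-\nu-j}\le\spk{\xi}^{\nu-|\alpha|}\spk{\xi,\mu}^{d-\nu-j}.$$
For the $\kappa$-homogeneous classes one argues in exactly the same way with $\spk{\xi}$, $\spk{\xi,\mu}$ replaced by $|\xi|$, $|\xi,\mu|$ (here $\xi\not=0$, which is precisely where these weakly homogeneous symbols live), and the classical case follows by applying the estimate to each homogeneous component of order and regularity $d-k$, $-k$.

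There is no genuine obstacle here. The only point worth a moment's thought is that the weight comparison above is carried out after the group actions have been peeled off, so it does not see which of the four index combinations of Definition \ref{def:btilde} is in play; this is exactly why a single scalar inequality suffices to upgrade all of them simultaneously.
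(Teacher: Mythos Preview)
Your argument is correct and is precisely the reasoning the paper intends: the corollary is stated without proof beyond the phrase ``Due to Proposition \ref{prop:incl}'', and you have filled in exactly the details implicit in that remark. The only step beyond Proposition \ref{prop:incl} is the monotonicity $\wt\calB^{d,0;0}_G\subseteq\wt\calB^{d,\nu;0}_G$ for $\nu\le0$, and your scalar weight inequality $(\spk{\xi,\mu}/\spk{\xi})^{\nu}\le1$ handles this cleanly for all four $\kappa$-index variants and their classical/homogeneous counterparts.
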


Given a function $u$ on $\rz_+$,  denote by $e_+u$ its extension by $0$ to $\rz$. It is known that $e_+$ extends 
to a mapping on the Sobolev spaces $H^s(^rz_+)$ provided $s>-\frac{1}{2}$. More precisely, $e_+$ induces maps 
\begin{align*}
 e_+&:H^s(\rz_+)\to H^s_0(\rpbar),\qquad -\frac{1}{2}<s<\frac{1}{2},\\
 e_+&:H^s(\rz_+)\to H^{\frac{1}{2}-\eps}_0(\rpbar),\qquad s\ge\frac{1}{2},
\end{align*}
where $\eps>0$ can be chosen arbitrarily small. By $r_+$ we shall denote the operator of restriction 
$\scrD^\prime(\rz)\to\scrD^\prime(\rz_+)$. Obviously, the same is valid in a $\cz^L$-valued setting. 

An operator or an operator-valued symbol $a$ defined on the intersection over all $s$ of all spaces $H^s_0(\rpbar,\cz^L)$ 
canonically induces an operator or operator-valued symbol defined on $H^s(\rz_+,\cz^L)$, $s>-1/2$, defined as $ae_+$. 
In particular, this applies to the singular Green symbols, i.e., 
$\calB^{d,\nu;0}_G((L,M),(L^\prime,M^\prime))$ can be considered as a subspace of 
\begin{equation}\label{eq:123}
 {S}^{d,\nu}(H^{s}(\rz_+,\cz^{L})\oplus\cz^{M},
   H^{s^\prime}(\rz_+,\cz^{L^\prime})\oplus\cz^{M^\prime}),
   \qquad s>-\frac{1}{2},\;s^\prime\in\rz,
\end{equation}
in the sense of Definition $\ref{def:sdnu}$. Actually, this will be the standard way of viewing singular Green symbols. 

\subsection{Leibniz product and formal adjoint symbol}\label{sec:green04}

Given a symbol $\bfg\in\calB^{d,\nu;0}_G((L,M),(L^\prime,M^\prime))$ and some fixed $\wh\mu\ge 0$, 
 $$\wh{\bfg}:=\bfg(\cdot,\cdot;\wh\mu)\in 
	S^d(\rz^{n-1}_{\xi^\prime};H^{s,\delta}_0(\rpbar,\cz^{L})\oplus\cz^{M},
           H^{s^\prime,\delta^\prime}(\rz_+,\cz^{L^\prime})\oplus\cz^{M^\prime}).$$  
for every choice of $s,s^\prime,\delta,\delta^\prime\in\rz$. In other words, $\wh\bfg$ is a singular 
Green symbol of type $0$ in Boutet de Monvel's algebra without parameter. This can be seen by writing 
 $$\wh{\bfg}(x^\prime,\xi^\prime)=\sum_{i,j=1}^2 
   \begin{pmatrix}\omega_i(\xi^\prime)&0\\0&1\end{pmatrix}
   \wh{\bfg}(x^\prime,\xi^\prime)
   \begin{pmatrix}\omega_j(\xi^\prime)&0\\0&1\end{pmatrix},$$
where $\omega\in \scrC^\infty_{\mathrm{comp}}([0,+\infty))$ is a cut-off function with $\omega\equiv1$ near 
the origin and $\omega_1(\xi^\prime)$ denotes the operator of multiplication by  $\omega(\cdot\spk{\xi^\prime})$, 
while $\omega_2(\xi^\prime)$ is the operator of multiplication by $(1-\omega)(\cdot\spk{\xi^\prime})$. In particular, 
it follows that 
 $$\op(\bfg)(\mu):
   \begin{matrix}\scrS(\rz^n_+,\cz^L)\\ \oplus \\ \scrS(\rz^{n-1},\cz^M)\end{matrix}
   \lra 
   \begin{matrix}\scrS(\rz^n_+,\cz^{L^\prime}) \\ \oplus \\  \scrS(\rz^{n-1},\cz^{M^\prime})\end{matrix}  
   \qquad\forall\;\mu\ge0. 
 $$ 

Using Theorems \ref{thm:comp02} and \ref{thm:adj02}, the observations stated in the end of 
Section \ref{sec:green01}, and Proposition \ref{prop:incl}, the behaviour of singular Green symbols 
under composition $($Leibniz product$)$ and formal adjoint is as follows: 

\begin{theorem}\label{thm:comp_type0}
If $\bfg_j\in \calB^{d_j,\nu_j;0}_G((L_j,M_j),(L_{j+1},M_{j+1}))$, $j=0,1$, then 
 $$\bfg_1\#\bfg_0\in  \calB^{d,\nu;0}_G((L_0,M_0),(L_{2},M_{2}))$$
with 
 $$d=d_0+d_1,\quad \nu=\min\{\nu_0,\nu_1,\nu_0+\nu_1\}.$$
Moreover, for every $N\in\nz$, 
 $$\bfg_1\#\bfg_0-\sum_{|\alpha|=0}^{N-1}\frac{1}{\alpha!}
     \partial^\alpha_{\xi^\prime}\bfg_1 D^\alpha_{x^\prime}\bfg_0
     \in  \calB^{d-N,\nu-N;0}_G((L_0,M_0),(L_{2},M_{2})).$$
\end{theorem}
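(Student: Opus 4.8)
The plan is to reduce the statement about singular Green symbols to the calculus for operator-valued symbols already established in Theorems \ref{thm:comp02} and \ref{thm:adj02}, together with the key inclusion from Proposition \ref{prop:incl}. First I would recall that, by definition, $\bfg_j = \bfg_{j,0} + \wt{\bfg}_j$ splits as a strongly parameter-dependent part $\bfg_{j,0}\in\calB^{d_j;0}_G$ and a weakly parameter-dependent part $\wt{\bfg}_j\in\wt\calB^{d_j,\nu_j;0}_G$. By Proposition \ref{prop:incl} the strongly parameter-dependent part $\bfg_{j,0}$ also belongs to $\wt\calB^{d_j,0;0}_G$, so in particular $\bfg_j$ may be viewed, for every choice of $s,s^\prime,\delta,\delta^\prime$, as an element of each of the four operator-valued symbol spaces appearing in Definition \ref{def:btilde} (with regularity $\min\{\nu_j,0\}$ in the strong summand and $\nu_j$ in the weak summand, so regularity at least $\min\{\nu_j,0\}$ overall, but we keep the two summands separate to retain the correct regularity). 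The bilinear map $(a_1,a_0)\mapsto a_1\# a_0$ is then applied summand by summand, using that the Leibniz product corresponds to composition of operators via \eqref{eq:Leibniz}, and that the $\kappa$-subscripts compose correctly as recorded at the end of Section \ref{sec:green01}.

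Next I would carry out the bookkeeping of $\kappa$-indices. Writing $\bfg_1\#\bfg_0 = \bfg_{1,0}\#\bfg_{0,0} + \bfg_{1,0}\#\wt{\bfg}_0 + \wt{\bfg}_1\#\bfg_{0,0} + \wt{\bfg}_1\#\wt{\bfg}_0$, the first term lies in $\calB^{d;0}_G$ by the composition theorem for strongly parameter-dependent symbols (i.e.\ Theorem \ref{thm:comp02} applied with infinite regularity, or equivalently the known composition result in Schulze's parameter-dependent Boutet de Monvel algebra). For the remaining three terms, at least one factor is weakly parameter-dependent; since the $\wt S$-class is a two-sided ideal under $\#$ (the remark following Theorem \ref{thm:comp02}), and since these products respect the four pairs of $\kappa$-subscripts required in Definition \ref{def:btilde} — for instance a $(\kappa,\bfkappa)$-symbol composed with a $(\bfkappa,\kappa)$-symbol yields a $(\kappa,\kappa)$-symbol, and so on through all the combinations that occur when multiplying block matrices acting on $\calK^{s,0}(\rz_+)^\delta\oplus\cz^M$ or $H^s_0(\rpbar,\cz^L)\oplus\cz^M$ — each lands in the appropriate $\wt S$-space with order $d_0+d_1$ and regularity $\min\{\nu_0,\nu_1,\nu_0+\nu_1\}$, hence the sum lies in $\wt\calB^{d,\nu;0}_G$. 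Altogether $\bfg_1\#\bfg_0\in\calB^{d;0}_G+\wt\calB^{d,\nu;0}_G=\calB^{d,\nu;0}_G$.

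For the asymptotic expansion, I would invoke \eqref{eq:comp_exp}: for each pair of summands and each $N$, the remainder $a_1\#a_0 - \sum_{|\alpha|<N}\frac1{\alpha!}(\partial_{\xi^\prime}^\alpha a_1)(D^\alpha_{x^\prime}a_0)$ lies in the operator-valued symbol space of order $d-N$ and regularity $\nu-N$, with continuous dependence. Summing over the four summands and over all relevant $(s,s^\prime,\delta,\delta^\prime)$ and $\kappa$-index combinations, and noting that the finite sum $\sum_{|\alpha|<N}\frac1{\alpha!}\partial^\alpha_{\xi^\prime}\bfg_1\,D^\alpha_{x^\prime}\bfg_0$ is itself a singular Green symbol of order $d$ and regularity $\nu$ (again by the ideal property applied termwise: differentiating in $\xi^\prime$ or $x^\prime$ preserves membership in the respective classes, with $\partial_{\xi^\prime}$ lowering order and regularity by one in the weak summands), we conclude that the remainder lies in $\calB^{d-N,\nu-N;0}_G((L_0,M_0),(L_2,M_2))$.

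The main obstacle I anticipate is not any single hard estimate but the systematic verification that the $\kappa$-subscripts match correctly for every entry of the composed block matrix: one must check that, when $\bfg_1$ and $\bfg_0$ are composed, each of the four operator-valued realizations required by Definition \ref{def:btilde} for $\bfg_1\#\bfg_0$ factors through an intermediate space (built from $H^s_0(\rpbar)$ or $\calK^{s,0}(\rz_+)^\delta$ summands) on which $\bfg_0$ acts with the right target subscript and $\bfg_1$ with the matching source subscript. This is where Proposition \ref{prop:incl} is essential, since it is what allows the strongly parameter-dependent summands to be inserted into any of these factorizations. The regularity arithmetic $\nu=\min\{\nu_0,\nu_1,\nu_0+\nu_1\}$ then emerges exactly as in Theorem \ref{thm:comp02}: the cross terms $\bfg_{1,0}\#\wt{\bfg}_0$ and $\wt{\bfg}_1\#\bfg_{0,0}$ contribute regularity $\min\{\nu_0,\nu_1\}$ (strong summand has effective regularity $0$, so $\min\{0+\nu_0,\ldots\}$ type bounds give $\nu_0$ resp.\ $\nu_1$) while $\wt{\bfg}_1\#\wt{\bfg}_0$ contributes $\nu_0+\nu_1$.
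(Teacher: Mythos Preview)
Your proposal is correct and follows exactly the route the paper indicates: the paper itself does not spell out a proof but simply states that the theorem follows from Theorems \ref{thm:comp02} and \ref{thm:adj02}, the $\kappa$-subscript bookkeeping recorded at the end of Section \ref{sec:green01}, and the inclusion of Proposition \ref{prop:incl}. Your splitting $\bfg_1\#\bfg_0$ into four summands, treating the purely strong term via the $S^d$-calculus and the three mixed/weak terms via the ideal property of the $\wt S$-classes (with the regularity arithmetic $\nu=\min\{\nu_0,\nu_1,\nu_0+\nu_1\}$ emerging from Example \ref{ex:Leibniz}.ii applied to each pair), and then invoking \eqref{eq:comp_exp} for the remainder, is precisely the intended argument.
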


If $\bfg_j\in \wt\calB^{d_j,\nu_j;0}_G((L_j,M_j),(L_{j+1},M_{j+1}))$ for at least one of the values 
$j=0$ or $j=1$, then $\bfg_1\#\bfg_0\in \wt\calB^{d,\nu;0}_G((L_0,M_0),(L_{2},M_{2}))$. In this sense, 
the weakly parameter-dependent Green symbols form  a two-sided ideal in the class of 
Green symbols. Moreover, the composition of two strongly parameter-dependent Green symbols is 
also strongly parameter-dependent.

\begin{theorem}
If $\bfg\in \calB^{d,\nu;0}_G((L,M),(L^\prime,M^\prime))$ then 
 $$\bfg^{(*)}\in  \calB^{d,\nu;0}_G((L^\prime,M^\prime),(L,M));$$
for every $N\in\nz$, 
 $$\bfg^{(*)}-\sum_{|\alpha|=0}^{N-1}\frac{1}{\alpha!}
     \partial^\alpha_{\xi^\prime}D^\alpha_{x^\prime}\bfg^*
     \in  \calB^{d-N,\nu-N;0}_G((L^\prime,M^\prime),(L,M)).$$
\end{theorem}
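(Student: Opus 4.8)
The plan is to split $\bfg=\bfg_0+\wt\bfg$ into its strongly and weakly parameter-dependent parts, with $\bfg_0\in\calB^{d;0}_G((L,M),(L^\prime,M^\prime))$ and $\wt\bfg\in\wt\calB^{d,\nu;0}_G((L,M),(L^\prime,M^\prime))$, to apply the adjoint part of Theorem~\ref{thm:adj02} (together with the $\kappa$-index-aware variants indicated at the end of Section~\ref{sec:green01}) to each summand, and then to reassemble. The only inputs beyond pure formalism are: that the $L^2(\rz_+)$-pairing identifies the dual of $H^{s}_0(\rpbar)$ with $H^{-s}(\rz_+)$, the dual of $H^{s^\prime}(\rz_+)$ with $H^{-s^\prime}_0(\rpbar)$, and the dual of $\calK^{s,0}(\rz_+)^\delta$ with $\calK^{-s,0}(\rz_+)^{-\delta}$; that with the standard group-action these pairs form Hilbert-triples (recorded in the Example following the definition of Hilbert-triples, the $\cz^N$-blocks with trivial action being trivially self-dual Hilbert-triples); and that taking the formal adjoint interchanges the two $\kappa$-subscripts.

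For $\bfg_0$: by Definition~\ref{def:b} it belongs to $S^d(\rz^{n-1}\times\rpbar;H^{s,\delta}_0(\rpbar,\cz^L)\oplus\cz^M,H^{s^\prime,\delta^\prime}(\rz_+,\cz^{L^\prime})\oplus\cz^{M^\prime})$ for all $s,\delta,s^\prime,\delta^\prime\in\rz$. Regarding source and target as the first components of the Hilbert-triples with middle terms $L^2(\rz_+,\cz^L)\oplus\cz^M$ and $L^2(\rz_+,\cz^{L^\prime})\oplus\cz^{M^\prime}$, the classical version of Theorem~\ref{thm:adj02} yields $\bfg_0^{(*)}\in S^d(\rz^{n-1}\times\rpbar;H^{-s^\prime,-\delta^\prime}_0(\rpbar,\cz^{L^\prime})\oplus\cz^{M^\prime},H^{-s,-\delta}(\rz_+,\cz^L)\oplus\cz^M)$, together with the remainder $\bfg_0^{(*)}-\sum_{|\alpha|=0}^{N-1}\frac{1}{\alpha!}\partial^\alpha_{\xi^\prime}D^\alpha_{x^\prime}\bfg_0^*$ in $S^{d-N}$ of the same spaces. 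Letting $s,\delta,s^\prime,\delta^\prime$ run over $\rz$ this reads $\bfg_0^{(*)}\in\calB^{d;0}_G((L^\prime,M^\prime),(L,M))$ with remainder in $\calB^{d-N;0}_G((L^\prime,M^\prime),(L,M))\subseteq\calB^{d-N,\nu-N;0}_G((L^\prime,M^\prime),(L,M))$.

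For $\wt\bfg$: it lies in each of the four spaces of Definition~\ref{def:btilde}. Applying the $\kappa$-index-aware adjoint statement of Section~\ref{sec:green01} to each, one checks, using the dualities listed above, that the $\bfkappa,\bfkappa$-space is carried into itself, the $\bfkappa,\kappa$-space into the $\kappa,\bfkappa$-space, the $\kappa,\bfkappa$-space into the $\bfkappa,\kappa$-space, and the $\kappa,\kappa$-space into itself, in each case with $(L,M)$ and $(L^\prime,M^\prime)$ (and the Sobolev indices) interchanged. Hence $\wt\bfg^{(*)}$ belongs to all four spaces required for membership in $\wt\calB^{d,\nu;0}_G((L^\prime,M^\prime),(L,M))$, and the corresponding remainders place $\wt\bfg^{(*)}-\sum_{|\alpha|=0}^{N-1}\frac{1}{\alpha!}\partial^\alpha_{\xi^\prime}D^\alpha_{x^\prime}\wt\bfg^*$ in $\wt\calB^{d-N,\nu-N;0}_G((L^\prime,M^\prime),(L,M))$.

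Adding the two contributions gives $\bfg^{(*)}=\bfg_0^{(*)}+\wt\bfg^{(*)}\in\calB^{d,\nu;0}_G((L^\prime,M^\prime),(L,M))$; since $\bfg^*=\bfg_0^*+\wt\bfg^*$ and $\partial_{\xi^\prime}$ lowers order and regularity by the order of the derivative while $D_{x^\prime}$ preserves both, each term $\frac{1}{\alpha!}\partial^\alpha_{\xi^\prime}D^\alpha_{x^\prime}\bfg^*$ lies in $\calB^{d-|\alpha|,\nu-|\alpha|;0}_G((L^\prime,M^\prime),(L,M))$, and summing the two remainder statements yields the asserted $\bfg^{(*)}-\sum_{|\alpha|=0}^{N-1}\frac{1}{\alpha!}\partial^\alpha_{\xi^\prime}D^\alpha_{x^\prime}\bfg^*\in\calB^{d-N,\nu-N;0}_G((L^\prime,M^\prime),(L,M))$. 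The whole argument is bookkeeping; the one point deserving care is the consistent matching of the four $\kappa$-configurations with the duality of the underlying Hilbert spaces — in particular that the standard group-action genuinely makes $(\calK^{s,0}(\rz_+)^\delta,L^2(\rz_+),\calK^{-s,0}(\rz_+)^{-\delta})$ a Hilbert-triple, so that the adjoint calculus of Section~\ref{sec:green01} applies verbatim in that configuration.
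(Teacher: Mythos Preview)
Your proposal is correct and follows essentially the same route the paper indicates: the paper does not give a standalone proof but states that the adjoint theorem follows from Theorem~\ref{thm:adj02} together with the $\kappa$-subscript-interchange observation at the end of Section~\ref{sec:green01}, and you have filled in precisely those details, splitting $\bfg=\bfg_0+\wt\bfg$ and handling each summand via the appropriate Hilbert-triple dualities. The only ingredient you do not invoke explicitly is Proposition~\ref{prop:incl}, but for the adjoint statement it is not needed (it is used for the composition theorem, where mixed products of strongly and weakly parameter-dependent symbols occur).
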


\section{Ellipticity and parametrix in the class ${1+\calB^{0,\nu;0}_G}$}\label{sec:param}

With every $\bfg=\bfg_0+\wt\bfg\in \calB^{d,\nu;0}_G((L,M),(L^\prime,M^\prime))$ 
we associate the homogeneous principal symbol  
\begin{equation}\label{eq:def_sigma}
 \sigma^{d,\nu}(\bfg):=\bfg_0^{(d)}+\wt{\bfg}^{(d,\nu)}.
\end{equation}
Note that, in particular, 
$$\sigma^{d,\nu}(\bfg) \in 
{S}^{(d,\nu)}(H^{s}_0(\rpbar,\cz^{L})\oplus\cz^{M},
H^{s^\prime}(\rz_+,\cz^{L^\prime})\oplus\cz^{M^\prime}),
\qquad s,s^\prime\in\rz,$$
and  
$$\sigma^{d,\nu}(\bfg) \in 
{S}^{(d,\nu)}(H^{s}(\rpbar,\cz^{L})\oplus\cz^{M},
H^{s^\prime}(\rz_+,\cz^{L^\prime})\oplus\cz^{M^\prime}),
\qquad s>-\frac{1}{2},s^\prime\in\rz,$$
where the spaces of $\kappa$- homogeneous symbols on the respective right-hand sides are 
understood in the sense of  Definition \ref{def:sdnu}. 

Combining the proof of Proposition \ref{prop:principal-map} with the proof of  
Proposition \ref{prop:incl} one finds$:$

\begin{theorem}
	The principal symbol induces a surjective map 
	$$\sigma^{(d,\nu)}:
	\calB^{d,\nu;0}_G((L,M),(L^\prime,M^\prime))
	\lra 
	\calB^{(d,\nu);0}_G((L,M),(L^\prime,M^\prime))$$
	with kernel equal to $\calB^{d-1,\nu-1;0}_G((L,M),(L^\prime,M^\prime))$. 
\end{theorem}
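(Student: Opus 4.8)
The plan is to combine the two already-established results: the surjectivity/kernel statement for the scalar operator-valued symbol calculus (Proposition~\ref{prop:principal-map} together with its Corollary) and the inclusion of the strongly parameter-dependent class into the weakly parameter-dependent one (Proposition~\ref{prop:incl}). First I would unravel the definition: a symbol $\bfg\in\calB^{d,\nu;0}_G((L,M),(L^\prime,M^\prime))$ is by definition a sum $\bfg_0+\wt\bfg$ with $\bfg_0$ in the intersection of the strongly parameter-dependent spaces and $\wt\bfg$ in the intersection of the four weakly parameter-dependent spaces (with the various $\bfkappa,\kappa$ subscripts), and its principal symbol is defined by \eqref{eq:def_sigma} as the sum of the two homogeneous principal symbols. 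So the map $\sigma^{(d,\nu)}$ is well-defined on $\calB^{d,\nu;0}_G$, takes values in $\calB^{(d,\nu);0}_G$, and the content of the theorem is (i) surjectivity and (ii) identification of the kernel.

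For \emph{surjectivity}, I would start from an arbitrary homogeneous symbol $\bfg_0^{(d)}+\wt\bfg^{(d,\nu)}\in\calB^{(d,\nu);0}_G$ and mimic the cut-off construction of Proposition~\ref{prop:principal-map}: pick a $0$-excision function $\chi(\xi^\prime,\mu)$ and a $0$-excision function $\wt\chi(\xi^\prime)$ and set $\bfg:=\chi\,\bfg_0^{(d)}+\wt\chi\,\wt\bfg^{(d,\nu)}$. That $\chi\,\bfg_0^{(d)}$ lies in $\calB^{d;0}_G$ and $\wt\chi\,\wt\bfg^{(d,\nu)}$ in $\wt\calB^{d,\nu;0}_G$ follows from the remarks immediately after the definitions of the respective homogeneous classes (the "excision of a homogeneous symbol is a symbol" statements, applied in each of the four Hilbert-space pairings); hence $\bfg\in\calB^{d,\nu;0}_G$, and by the limit formula for the principal symbol $\sigma^{(d,\nu)}(\bfg)=\bfg_0^{(d)}+\wt\bfg^{(d,\nu)}$. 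This gives a section of $\sigma^{(d,\nu)}$ and proves surjectivity.

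For the \emph{kernel}, the inclusion $\calB^{d-1,\nu-1;0}_G\subset\ker\sigma^{(d,\nu)}$ is immediate from the limit-formula definition of the principal symbol (a symbol of order $d-1$, regularity $\nu-1$ has vanishing $(d,\nu)$-principal symbol in each pairing). For the reverse inclusion, suppose $\bfg=\bfg_0+\wt\bfg$ with $\sigma^{(d,\nu)}(\bfg)=\bfg_0^{(d)}+\wt\bfg^{(d,\nu)}=0$. I would argue separately in each of the four operator-valued symbol spaces occurring in Definition~\ref{def:btilde}. In any fixed one of them, $\bfg_0$ lives in the corresponding strongly parameter-dependent space $S^d$ and, by Proposition~\ref{prop:incl} (and its homogeneous analogue), also in the associated weakly parameter-dependent space $\wt S^{d,0}$ with the correct $\bfkappa/\kappa$-decoration; thus $\bfg=\bfg_0+\wt\bfg$ is an element of $\wt S^{d,\nu}$ in that pairing (using $\nu\le\min(0,\nu)$ only when $\nu\le0$, and for $\nu>0$ noting that the sum of a regularity-$0$ and a regularity-$\nu$ symbol still has regularity $\min(0,\nu)=0$ — here one must be slightly careful, and I return to this below) with vanishing homogeneous principal symbol of the appropriate bidegree. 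By the Corollary to Proposition~\ref{prop:principal-map}, applied in each of the four pairings, a classical weakly parameter-dependent symbol with vanishing $(d,\nu)$-principal symbol lies in $\wt S^{d-1,\nu-1}$; doing this in all four pairings simultaneously shows $\bfg\in\wt\calB^{d-1,\nu-1;0}_G\subset\calB^{d-1,\nu-1;0}_G$.

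The main obstacle I anticipate is the bookkeeping around \emph{regularities} when $\nu>0$: $\bfg_0$ has "infinite regularity" but after the inclusion of Proposition~\ref{prop:incl} it is only recorded as regularity $0$, so the naive sum $\bfg_0+\wt\bfg$ is a priori only a symbol of regularity $0$, not $\nu$, and one cannot directly feed it into the Corollary at bidegree $(d,\nu)$. The fix — and this is exactly the step mirroring the delicate Taylor-expansion argument in the proof of Proposition~\ref{prop:principal-map} — is to \emph{not} discard the strong parameter-dependence of $\bfg_0$: one keeps $\bfg_0$ as a genuine $S^d$-symbol, uses that $\bfg_0^{(d)}=-\wt\bfg^{(d,\nu)}$ is simultaneously strongly $\kappa$-homogeneous and (by the vanishing for $\nu>0$, Remark~\ref{rem:extension}) extends with the right decay across $\xi^\prime=0$, and then re-runs the Taylor-expansion estimate of Proposition~\ref{prop:principal-map} to show the excised difference $\chi\bfg_0^{(d)}+\wt\chi\,\wt\bfg^{(d,\nu)}\in\wt\calB^{d-\infty,\nu-\infty;0}_G$, so that modulo this smoothing remainder $\bfg$ differs from $\bfg_0+\wt\bfg$ only by terms which the ordinary (scalar) exact-sequence handles. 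In short: the theorem is "Proposition~\ref{prop:principal-map} plus Proposition~\ref{prop:incl}, applied in each of the four Hilbert-space pairings of Definition~\ref{def:btilde}", and the only real work is checking that the regularity-bookkeeping of the proof of Proposition~\ref{prop:principal-map} survives the passage through Proposition~\ref{prop:incl} — which it does, because that proof only ever used the vanishing of $\xi^\prime$-derivatives of $\bfg_0^{(d)}$ at $\xi^\prime=0$ up to order $N<\nu$, a property preserved by all four embeddings.
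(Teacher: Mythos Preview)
Your proposal is correct and follows exactly the approach indicated by the paper, which offers no detailed proof but simply states that the result is obtained by ``combining the proof of Proposition~\ref{prop:principal-map} with the proof of Proposition~\ref{prop:incl}.'' You have in fact supplied more detail than the paper does: you correctly isolate the surjectivity via the cut-off construction, the easy inclusion of the kernel, and you identify the only genuine issue---that for $\nu>0$ the embedding of Proposition~\ref{prop:incl} records $\bfg_0$ only as regularity~$0$---together with its resolution, namely re-running the Taylor-expansion argument from the proof of Proposition~\ref{prop:principal-map} in each of the four Hilbert-space pairings of Definition~\ref{def:btilde}.
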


We shall now discuss ellipticity and parametrix construction for operators of the form identity plus zero order singular 
Green operator. Again, it is  necessary to require positive regularity $\nu>0$. 

It is convenient to introduce the space  
\begin{equation}\label{eq:ulc}
 B^{d,\nu;0}_G (\rz^{n-1}\times\rpbar;L,L^{\prime})=B^{(0,\nu);0}_G (L,L^\prime):=
     \calB^{d,\nu;0}_G((L,0),(L^{\prime},0)).
\end{equation}
Or, in other words, $B^{d,\nu;0}_G (L,L^{\prime})$ is the space of the entries $g$ in the block matrices 
$\bfg=\begin{pmatrix}g&k\\t&q\end{pmatrix}\in \calB^{d,\nu;0}_G ((L,M),(L^{\prime},M^{\prime}))$ with arbitrary 
$M,M^{\prime}$. 

\begin{lemma}\label{lem:sandwich}
Let $\nu>0$ and $a\in  {S}^{(0,\nu)}(H^{s_0}(\rz_+,\cz^{L_1}), H^{s_0}(\rz_+,\cz^{L_2}))$ 
for some fixed $s_0>-1/2$ be a $\kappa$-homogeneous symbol. Further let 
$g\in B^{(0,\nu);0}_G (L_0,L_1)$ and $h\in B^{(0,\nu);0}_G (L_2,L_3)$. 
Then $hag\in B^{(0,\nu);0}_G (L_0,L_3)$.
\end{lemma}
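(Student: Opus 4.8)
The statement asserts that sandwiching a homogeneous operator-valued symbol $a\in{S}^{(0,\nu)}(H^{s_0}(\rz_+,\cz^{L_1}),H^{s_0}(\rz_+,\cz^{L_2}))$ between two homogeneous singular Green symbols $g$ and $h$ of regularity $\nu$ again yields a homogeneous singular Green symbol of regularity $\nu$. The strategy is to reduce to the four-fold characterization in Definition \ref{def:btilde} (in its homogeneous incarnation, via Proposition \ref{prop:incl}) and verify each of the four estimates by tracking how the group action interacts across the composition $h\circ a\circ g$. The key point is that $g$ maps into a space on which $a$ acts continuously, and then $h$ continues that output into the target Green space; the $\kappa$-subscripts must fit together as indicated at the end of Section \ref{sec:green01}.

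\textbf{First steps.} Write $g=g_0+\wt g$, $a=a_0+\wt a$, $h=h_0+\wt h$ according to Definition \ref{def:sdnu} (all in the $\kappa$-homogeneous versions). By bilinearity of composition and the two-sided ideal property recorded after Theorem \ref{thm:comp02} (and its singular-Green analogue after Theorem \ref{thm:comp_type0}), it suffices to handle the strongly parameter-dependent contributions and the weakly parameter-dependent contributions separately. For the purely strong part $h_0 a_0 g_0$, one uses that $\calB^{(d);0}_G$ is built from $S^{(d)}$ spaces intersected over all Sobolev indices, together with Proposition \ref{prop:incl} to land in $\wt\calB^{(0,0);0}_G\subseteq\wt\calB^{(0,\nu);0}_G$ (recall $\nu>0$); the composition here is just composition of strongly homogeneous operator-valued symbols and stays in the class by the standard calculus. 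For any term containing at least one weak factor, the ideal property forces the composition into $\wt\calB^{(0,\nu);0}_G$, so it remains only to check the four defining estimates of Definition \ref{def:btilde} for, say, $\wt h\, a_0\, g_0$, $h_0\,\wt a\,g_0$, $\wt h\,\wt a\,\wt g$, and the mixed variants.

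\textbf{The main estimates.} Fix one of the four spaces, e.g.\ the $_{\kappa,\kappa}$-estimate with source $\calK^{s,0}(\rz_+,\cz^{L_0})^\delta$ and target $\calK^{s',0}(\rz_+,\cz^{L_3})^{\delta'}$. One inserts $g$ between $\calK^{s,0}(\rz_+)^\delta$ and, say, $H^{s_0}(\rz_+,\cz^{L_1})$ (using the $_{\kappa,\bfkappa}$-component of $g$), then $a$ between $H^{s_0}$ and $H^{s_0}$ (continuous in $x'$ and $\mu$ because $a\in S^{(0,\nu)}$ of order $0$, with the factor $\big(|\xi|/|\xi,\mu|\big)^\nu$ controlled by Remark \ref{rem:sdnue0e1}), then $h$ between $H^{s_0}$ and $\calK^{s',0}(\rz_+)^{\delta'}$ (using the $_{\bfkappa,\kappa}$-component of $h$). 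Applying the Leibniz rule to $D^\alpha_{\xi'}D^\beta_{x'}D^j_\mu(hag)$ distributes the derivatives among the three factors; in each resulting product one inserts $\kappa(\xi')$-type intertwiners at the interfaces, so that every individual factor is estimated by its defining symbol bound, while the ``internal'' group actions $\kappa(\xi')$ and $\kappa(\xi',\mu)$ meet on $H^{s_0}(\rz_+,\cz^{\cdot})$, where they only produce a bounded intertwiner $\kappa_{\spk{\xi'}/\spk{\xi',\mu}}$ or its inverse of polynomial growth $\spk{\xi',\mu}^{\pm M}$ — and this polynomial factor is absorbed exactly as in the proof of Proposition \ref{prop:incl}, because one is free to choose the Sobolev order $k$ of the intermediate space $H^{s_0}$ (or rather $H^{k,k}$) large enough. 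Multiplying the three bounds and summing over the Leibniz terms gives $\spk{\xi'}^{\nu-|\alpha|}\spk{\xi',\mu}^{-\nu-j}$, which (in the homogeneous version, with $\spk{\cdot}$ replaced by $|\cdot|$) is precisely the required estimate; the other three of the four spaces are handled identically, only changing which $_{\kappa}$ vs.\ $_{\bfkappa}$ component of $g$ and $h$ one uses at the outer interfaces.

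\textbf{Main obstacle.} The delicate point — and the reason $s_0$ is fixed and $a$ lives between \emph{equal-index} Sobolev spaces — is the middle interface: $g$ naturally produces $\kappa(\xi',\mu)$ on one side and $h$ wants $\kappa(\xi',\mu)$ coming back, but $a$ itself is homogeneous in the \emph{joint} variable $(\xi',\mu)$, so threading the two group actions through $a$ requires $a$ to be bounded on a Sobolev scale in a way that commutes (up to polynomially bounded intertwiners) with $\kappa_\lambda$. This is exactly where one invokes that $\kappa_\lambda$ acts on $H^{s_0}(\rz_+)$ with norm $\preceq\max(\lambda,\lambda^{-1})^{|s_0|}$ (equation \eqref{eq:M-kappa}) and that, after conjugating $a$ by $\kappa$, one still has a symbol of order $0$; choosing the auxiliary Sobolev order large absorbs all the resulting powers, just as in Proposition \ref{prop:incl}. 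Once this interface bookkeeping is set up, the remaining computations are the routine Leibniz-rule and chain-rule estimates of the calculus and I would not spell them out.
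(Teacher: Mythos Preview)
Your overall strategy --- decompose $g=g_0+\wt g$, $a=a_0+\wt a$, $h=h_0+\wt h$ and then verify the four estimates of Definition~\ref{def:btilde} term by term, using Proposition~\ref{prop:incl} to view strong factors as weak ones of regularity $0$ --- is exactly what the paper does, and your choice of components ($_{\kappa,\bfkappa}$ for $g$, $_{\bfkappa,\bfkappa}$ for $a$, $_{\bfkappa,\kappa}$ for $h$ in the $_{\kappa,\kappa}$-case) is the right one.

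Where you go wrong is the paragraph labelled \emph{Main obstacle}. There is no obstacle: once you have chosen the $_{\kappa,\bfkappa}$-component of $g$ and the $_{\bfkappa,\kappa}$-component of $h$, the internal interfaces on $H^{s_0}$ all carry the \emph{same} group action $\kappa(\xi',\mu)$ (both $a_0$ and $\wt a$ lie in $\wt S^{(0,0)}(H^{s_0},H^{s_0})_{\bfkappa,\bfkappa}$, the former by \eqref{eq:inclusion}, the latter because $\nu>0$). The intertwiners you insert are identities, and the three bounds multiply directly to give $|\xi'|^{\nu-|\alpha|}|\xi',\mu|^{-\nu-j}$ with no leftover powers. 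The paper's proof is precisely this one-line observation.

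Your proposed remedy for the non-existent mismatch is itself broken: you write that one is ``free to choose the Sobolev order $k$ of the intermediate space $H^{s_0}$ (or rather $H^{k,k}$) large enough'', but $s_0$ is \emph{fixed} by hypothesis --- the symbol $a$ is only assumed to act on $H^{s_0}(\rz_+,\cz^{L_1})\to H^{s_0}(\rz_+,\cz^{L_2})$ for that single value. The absorption argument of Proposition~\ref{prop:incl} works only because the Green symbols there map into $H^{k,k}$ for \emph{all} $k$; here $a$ does not, so a genuine intertwiner $\kappa_{|\xi',\mu|/|\xi'|}$ on $H^{s_0}$ would contribute an uncontrolled factor $(|\xi',\mu|/|\xi'|)^{M}$ that cannot be absorbed. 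Fortunately you never need it: simply delete the last paragraph and the proof is complete.
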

\begin{proof}
For convenience of notation we shall assume $L_0=L_1=L_2=L_3=1$; the general case is proved in the same way. 
Write $a=a_0+\wt{a}$ with 
 $$a_0\in S^{(0)}(H^{s_0}(\rz_+),H^{s_0}(\rz_+)),\qquad 
    \wt{a}\in \wt{S}^{(0,\nu)}(H^{s_0}(\rz_+),H^{s_0}(\rz_+))_{\bfkappa,\bfkappa}.$$ 
Similarly decompose $g$ and $h$ with 
$g_0,h_0\in B^{(0);0}_G(1,1)$ and $\wt{g},\wt{h}\in \wt{B}^{(0,\nu);0}_G(1,1)$. 
Recall that also $g_0,h_0\in \wt{B}^{(0,0);0}_G(1,1)$. Now write 
 $$hag=h_0a_0g_0+h_0\wt{a}g_0+h_0a\wt{g}+\wt{h}ag.$$
Since $h_0\in S^{(0)}(H^{s_0}(\rz_+),H^{s^\prime,\delta^\prime}(\rz_+))$ for every $s^\prime$ and $\delta^\prime$, 
it follows that $h_0a_0g_0\in B^{(0);0}_G(1,1)$. 

All the other terms in the above expression for $hag$ belong to $\wt{B}^{(0,\nu);0}_G(1,1)$. 
For example let us consider 
 $$h_0a\wt{g}=h_0a_0\wt{g}+h_0\wt{a}\wt{g}.$$
Since $h_0\in \wt{B}^{(0,0);0}_G(1,1)$, $\wt{g}\in\wt{B}^{(0,\nu);0}_G(1,1)$, and 
$a_0,\wt{a}\in\wt{S}^{(0,0)}(H^{s_0}(\rz_+),H^{s_0}(\rz_+))_{\bfkappa,\bfkappa}$ it is easy to see 
that both $h_0a_0\wt{g}$ and $h_0\wt{a}\wt{g}$ have all the mapping properties listed in Definition 
\ref{def:btilde}. The reasoning for all remaining terms in the expansion of $hag$ is analogous. 
\end{proof}

\begin{theorem}\label{thm:ellipticity-ulc}
Let $\nu>0$ and $g\in B^{0,\nu;0}_G (L,L)$. Suppose that, for some $s_0>-1/2$,  
 $$1+\sigma^{(0,\nu)}(g)(x^\prime,\xi^\prime;\mu):H^{s_0}(\rz_+,\cz^L)\lra H^{s_0}(\rz_+,\cz^L)$$
is invertible for all $x^\prime$ and $(\xi^\prime,\mu)\not=0$ with 
 $$\|(1+\sigma^{(0,\nu)}(g)(x^\prime,\xi^\prime;\mu))^{-1}\|_{\scrL(H^{s_0}(\rz_+,\cz^L))}\preceq 1
	\qquad \forall\;x^\prime\quad \forall\;|\xi^\prime,\mu|=1.$$
Then there exists an $h\in B^{0,\nu;0}_G (L,L)$ such that 
 $$(1+g)\#(1+h)-1,\,(1+h)\#(1+g)-1\;\in\;B^{-\infty,\nu-\infty;0}_G (L,L).$$
In particular, 
 $$(1+\sigma^{(0,\nu)}(g)(x^\prime,\xi^\prime;\mu))^{-1}
   =1+\sigma^{(0,\nu)}(h)(x^\prime,\xi^\prime;\mu).$$
\end{theorem}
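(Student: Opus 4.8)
The plan is to run the standard parametrix construction for elliptic symbols of positive regularity, but carried out inside the singular Green class $B^{0,\nu;0}_G(L,L)$ rather than in the ambient operator-valued symbol class. First I would invoke Proposition~\ref{prop:inverse_principal} (applied with $E_0=E_1=H^{s_0}(\rz_+,\cz^L)$) to the symbol $a:=1+\sigma^{(0,\nu)}(g)$, which lies in $S^{(0,\nu)}(H^{s_0}(\rz_+,\cz^L),H^{s_0}(\rz_+,\cz^L))$ by the displayed remarks preceding \eqref{eq:ulc}; the hypothesis gives exactly the invertibility and uniform-boundedness assumptions of that proposition, so $a^{-1}\in S^{(0,\nu)}(H^{s_0}(\rz_+,\cz^L),H^{s_0}(\rz_+,\cz^L))$. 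Writing $a^{-1}=1+b^{(0,\nu)}$ with $b^{(0,\nu)}:=a^{-1}-1$, the first task is to show that $b^{(0,\nu)}$ is actually a \emph{homogeneous singular Green principal symbol}, i.e. $b^{(0,\nu)}\in B^{(0,\nu);0}_G(L,L)$ and not merely an operator-valued symbol on $H^{s_0}$. This is where Lemma~\ref{lem:sandwich} enters: from $a^{-1}=1+b^{(0,\nu)}$ and $a=1+\sigma^{(0,\nu)}(g)$ one has the algebraic identity
$$b^{(0,\nu)}=-\sigma^{(0,\nu)}(g)+\sigma^{(0,\nu)}(g)\,a^{-1}\,\sigma^{(0,\nu)}(g),$$
(analogous to the trick $(1+a_\chi)^{-1}=1-a_\chi+a_\chi(1+a_\chi)^{-1}a_\chi$ used in Lemma~\ref{lem:inv01}), and the sandwiched middle term is of the form $h\,a^{-1}\,g$ with $g,h\in B^{(0,\nu);0}_G(L,L)$ and $a^{-1}$ an $s_0$-homogeneous symbol, hence lies in $B^{(0,\nu);0}_G(L,L)$ by Lemma~\ref{lem:sandwich}; the first term is trivially in $B^{(0,\nu);0}_G(L,L)$.

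Next I would lift $b^{(0,\nu)}$ to a full symbol. Using the surjectivity of the principal symbol map (the theorem stated just before Lemma~\ref{lem:sandwich}), choose $b_0\in B^{0,\nu;0}_G(L,L)$ with $\sigma^{(0,\nu)}(b_0)=b^{(0,\nu)}$. Then set $h_0:=b_0$ and form $1+h_0$; by multiplicativity of the principal symbol under $\#$ (which holds in $\calB^{d,\nu;0}_G$ by Theorem~\ref{thm:comp_type0} / \eqref{eq:comp_exp}), the remainder
$$r_0:=(1+g)\#(1+h_0)-1=g+h_0+g\#h_0$$
has principal symbol $\sigma^{(0,\nu)}(g)+b^{(0,\nu)}+\sigma^{(0,\nu)}(g)b^{(0,\nu)}=a^{-1}a-1=0$, hence $r_0\in B^{-1,\nu-1;0}_G(L,L)$ (using the short exact sequence / kernel-of-$\sigma$ statement). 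Then the usual formal Neumann series: put $h\sim\sum_{k\ge0}(1+h_0)\#(-r_0)^{\#k}$, asymptotically summed in $\calB^{d,\nu;0}_G$ (asymptotic summation is available, as noted after the introduction of these classes, and the orders/regularities drop by $1$ at each step), giving $(1+g)\#(1+h)-1\in B^{-\infty,\nu-\infty;0}_G(L,L)$. Here $h:=h_0+\sum_{k\ge1}(\cdots)\in B^{0,\nu;0}_G(L,L)$. The same construction on the other side produces a left parametrix, which then coincides with $1+h$ modulo $B^{-\infty,\nu-\infty;0}_G$ by the standard associativity argument, so $1+h$ is a two-sided parametrix. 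Finally, applying $\sigma^{(0,\nu)}$ to $(1+g)\#(1+h)-1\in B^{-\infty,\nu-\infty;0}_G$ (whose principal symbol vanishes) gives $(1+\sigma^{(0,\nu)}(g))(1+\sigma^{(0,\nu)}(h))=1$, i.e. the last displayed identity.

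The main obstacle is the very first step: verifying that $b^{(0,\nu)}=a^{-1}-1$, produced by Proposition~\ref{prop:inverse_principal} merely as an element of $S^{(0,\nu)}(H^{s_0},H^{s_0})$, genuinely has \emph{all four} mapping properties of Definition~\ref{def:btilde} (the $\kappa,\bfkappa$-combinations with $\calK$- and $H^\infty$-type spaces), so that it lands in $B^{(0,\nu);0}_G$. This is exactly what the sandwich identity above plus Lemma~\ref{lem:sandwich} is designed to handle, but one must be a little careful that Lemma~\ref{lem:sandwich} is applicable: $g,h\in B^{(0,\nu);0}_G(L,L)$ by hypothesis (resp. by the choice $h=g$ in the symmetric role), and $a^{-1}$ is, by Proposition~\ref{prop:inverse_principal}, a $\kappa$-homogeneous symbol in $S^{(0,\nu)}(H^{s_0}(\rz_+,\cz^L),H^{s_0}(\rz_+,\cz^L))$ with $\nu>0$, which is precisely the hypothesis ``$a\in S^{(0,\nu)}(H^{s_0}(\rz_+,\cz^{L_1}),H^{s_0}(\rz_+,\cz^{L_2}))$'' required there. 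Everything after that is the routine Neumann-series bookkeeping, which I would not spell out in detail.
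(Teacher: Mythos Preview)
Your proposal is correct and follows essentially the same route as the paper's own proof: apply Proposition~\ref{prop:inverse_principal} to $a=1+\sigma^{(0,\nu)}(g)$, use the sandwich identity $a^{-1}-1=-\sigma^{(0,\nu)}(g)+\sigma^{(0,\nu)}(g)\,a^{-1}\,\sigma^{(0,\nu)}(g)$ together with Lemma~\ref{lem:sandwich} to land the inverse principal symbol back in $B^{(0,\nu);0}_G(L,L)$, then lift and run the Neumann-series parametrix argument. The paper's proof differs only cosmetically (it expands the middle term once more to $\sigma(g)^2+\sigma(g)b\sigma(g)$ before invoking Lemma~\ref{lem:sandwich}, and writes the remainder with the opposite sign convention $1-r'$), but the structure and the key ideas are identical.
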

\begin{proof}
For simplicity of notation write $\sigma(g)=\sigma^{(0,\nu)}(g)$. 
Proposition \ref{prop:inverse_principal} applied to 
 $$a:=1+\sigma(g)\in S^{(0,\nu)}(H^{s_0}(\rz_+,\cz^L),H^{s_0}(\rz_+,\cz^L))$$ 
yields the existence of a $b\in S^{(0,\nu)}(H^{s_0}(\rz_+,\cz^L),H^{s_0}(\rz_+,\cz^L))$ with $(1+\sigma(g))^{-1}=1+b$. 
Since   
 $$1+b=1-\sigma(g)+\sigma(g)(1+\sigma(g))^{-1}\sigma(g)=1-\sigma(g)+\sigma(g)^2+\sigma(g)b\sigma(g),$$
it follows from Lemma \ref{lem:sandwich} that $b\in B^{(0,\nu);0}_G(L,L)$. 

Choose any symbol $h^\prime\in B^{0,\nu;0}_G(L,L)$ with $\sigma^{(0,\nu)}(h^\prime)=b$. Then 
  $$(1+g)\#(1+h^\prime)=1-r^\prime, \qquad r^\prime\in B^{-1,\nu-1;0}_G(L,L),$$
since $r^\prime$ has vanishing principal symbol. Choose any $r\in  B^{-1,\nu-1;0}_G(L,L)$ with 
$r\sim \sum_{j\ge 1}(r^\prime)^{\#j}$. By the standard von Neumann argument, 
 $$(1+h^\prime)\#(1+r)=1+h^\prime+r+h^\prime\#r=:1+h_R$$ 
is a right-parametrix of $1+g$. 
In the analogous way we construct a left-parametrix $1+h_L$. 
Then $1+h_L$ and $1+h_R$ coincide up to a regularizing error. Hence the claim of the theorem follows by 
choosing $h=h_R$ or $h=h_L$. 
\end{proof}

The situation of general block-matrices can be reduced to the previous situation:

\begin{theorem}\label{thm:ellipticity}
	Let $\nu>0$ and $\bfg\in \calB^{0,\nu;0}_G((L,M),(L,M))$. 
	Suppose that   
	$$\mathbf{1}+\sigma^{(0,\nu)}(\bfg)(x^\prime,\xi^\prime;\mu):
	  \begin{matrix} L^2(\rz_+,\cz^L)\\ \oplus \\ \cz^M\end{matrix} 
	  \lra 
	  \begin{matrix} L^2(\rz_+,\cz^L)\\ \oplus \\ \cz^{M}\end{matrix}$$
	is an isomorphism for all $x^\prime$ and $(\xi^\prime,\mu)\not=0$ with 
	$$\|(\mathbf{1}+\sigma^{(0,\nu)}(\bfg)(x^\prime,\xi^\prime;\mu))^{-1}\|\preceq 1
	\qquad \forall\;x^\prime\quad \forall\;|\xi^\prime,\mu|=1.$$
	Then there exists an $\bfh\in \calB^{0,\nu;0}_G((L,M),(L,M))$ such that 
	both $	(\mathbf{1}+\bfg)\#(\mathbf{1}+\bfh)-\mathbf{1}$ and 
	$(\mathbf{1}+\bfh)\#(\mathbf{1}+\bfg)-\mathbf{1}$ 
	belong to $\calB^{-\infty,\nu-\infty;0}_G((L,M),(L,M))$.
	In particular, 
	$$(\mathbf{1}+\sigma^{(0,\nu)}(\bfg)(x^\prime,\xi^\prime;\mu))^{-1}
	=\mathbf{1}+\sigma^{(0,\nu)}(\bfh)(x^\prime,\xi^\prime;\mu).$$
\end{theorem}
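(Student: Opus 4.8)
The plan is to imitate the proof of Theorem \ref{thm:ellipticity-ulc}, now applying Proposition \ref{prop:inverse_principal} to the full block symbol and replacing Lemma \ref{lem:sandwich} by its block-matrix counterpart.

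First I would put $E:=L^2(\rz_+,\cz^L)\oplus\cz^M$, endowed with the standard group-action extended trivially to $\cz^M$, and regard $\bfa:=\mathbf{1}+\sigma^{(0,\nu)}(\bfg)$ as an element of ${S}^{(0,\nu)}(E,E)$; this is legitimate because $\sigma^{(0,\nu)}(\bfg)$ lies in the homogeneous analogue of the space on the right of \eqref{eq:123} with $s=s^\prime=0>-\tfrac12$, and the constant symbol $\mathbf{1}$ belongs to $S^{(0)}(E,E)$. Since $\nu>0$, $\sigma^{(0,\nu)}(\bfg)$ is defined for all $(\xi^\prime,\mu)\not=0$ (Remark \ref{rem:extension}), so the hypothesis is precisely what is required to apply Proposition \ref{prop:inverse_principal}, which yields $\bfa^{-1}\in {S}^{(0,\nu)}(E,E)$. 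Writing $\bfa^{-1}=\mathbf{1}+\bfb$ and using $\bfa^{-1}=\mathbf{1}-\sigma^{(0,\nu)}(\bfg)+\sigma^{(0,\nu)}(\bfg)\,\bfa^{-1}\,\sigma^{(0,\nu)}(\bfg)$ gives
\[
 \bfb=-\sigma^{(0,\nu)}(\bfg)+\sigma^{(0,\nu)}(\bfg)\,\bfa^{-1}\,\sigma^{(0,\nu)}(\bfg).
\]

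Next I would prove a block version of Lemma \ref{lem:sandwich}: for Hilbert spaces $E^\prime,E^{\prime\prime}$ of the form $L^2(\rz_+,\cz^{\bullet})\oplus\cz^{\bullet}$ with the extended standard group-action, if $\bfc\in {S}^{(0,\nu)}(E^\prime,E^{\prime\prime})$ and $\bfg^\prime,\bfh^\prime$ are composable singular Green symbols from the corresponding $\calB^{(0,\nu);0}_G$-classes, then $\bfh^\prime\,\bfc\,\bfg^\prime$ again lies in $\calB^{(0,\nu);0}_G$. The proof repeats that of Lemma \ref{lem:sandwich} almost verbatim: one decomposes $\bfc=\bfc_0+\wt\bfc$ and $\bfg^\prime,\bfh^\prime$ into their strongly and weakly parameter-dependent parts, expands the triple product, observes that the all-strong term stays strongly parameter-dependent by the composition rule for $S^{(0)}$-symbols together with smoothing in the target (as in the treatment of $h_0a_0g_0$ in Lemma \ref{lem:sandwich}), and that every remaining term carries at least one weakly parameter-dependent factor and hence lies in the ideal $\wt\calB^{(0,\nu);0}_G$; in doing so one verifies all four mapping properties of Definition \ref{def:btilde} by tracking the $\kappa$/$\bfkappa$-subscripts, exactly as in Proposition \ref{prop:incl}. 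Applying this with $\bfc=\bfa^{-1}$ and $\bfg^\prime=\bfh^\prime=\sigma^{(0,\nu)}(\bfg)$ shows $\bfb\in\calB^{(0,\nu);0}_G((L,M),(L,M))$.

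From here the argument is literally that of Theorem \ref{thm:ellipticity-ulc}: choose $\bfh^\prime\in\calB^{0,\nu;0}_G((L,M),(L,M))$ with $\sigma^{(0,\nu)}(\bfh^\prime)=\bfb$, note $(\mathbf{1}+\bfg)\#(\mathbf{1}+\bfh^\prime)=\mathbf{1}-\bfr^\prime$ with $\bfr^\prime\in\calB^{-1,\nu-1;0}_G$ (its principal symbol vanishes), set $\bfr\sim\sum_{j\ge1}(\bfr^\prime)^{\#j}\in\calB^{-1,\nu-1;0}_G$ and $\mathbf{1}+\bfh_R:=(\mathbf{1}+\bfh^\prime)\#(\mathbf{1}+\bfr)$, a right-parametrix, construct a left-parametrix $\mathbf{1}+\bfh_L$ symmetrically, and observe $\bfh_L-\bfh_R\in\calB^{-\infty,\nu-\infty;0}_G$, so $\bfh:=\bfh_R$ works. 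Applying $\sigma^{(0,\nu)}$ to the two parametrix relations and using multiplicativity of the principal symbol yields the final identity. The only genuinely new point — and the main obstacle — is the block version of Lemma \ref{lem:sandwich}: one must check that all four $\kappa$/$\bfkappa$-combinations of Definition \ref{def:btilde} survive the sandwiching, in particular that the $\cz^M$-entries (carrying the trivial group-action) and the compositions of strongly parameter-dependent symbols behave as claimed; everything else is bookkeeping already carried out in the scalar case.
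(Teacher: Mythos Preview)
Your argument is correct but proceeds differently from the paper. You extend Lemma \ref{lem:sandwich} to block matrices and then repeat the proof of Theorem \ref{thm:ellipticity-ulc} verbatim at the block level. The paper instead \emph{reduces} to Theorem \ref{thm:ellipticity-ulc}: writing $\bfa=\mathbf{1}+\sigma^{(0,\nu)}(\bfg)$, it passes to $\bfa^*\bfa=\mathbf{1}+\wt\bfg$ and uses a positivity argument (rewriting $\bfa^*\bfa=C(\mathbf{1}-\bfs)$ with $0\le\bfs\le\delta<1$) to conclude that the \emph{diagonal} blocks $1+\wt g$ and $1+\wt q$ are separately invertible with uniformly bounded inverses; Theorem \ref{thm:ellipticity-ulc} then handles $1+\wt g$, and an explicit block LDU factorization gives $(\bfa^*\bfa)^{-1}=\mathbf{1}+\bfc$ with $\bfc\in\calB^{(0,\nu);0}_G$, from which $\bfa^{-1}=(\bfa^*\bfa)^{-1}\bfa^*$ has the required form. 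The trade-off: the paper avoids re-proving any sandwich lemma and gets an explicit formula for the inverse principal symbol in terms of already-controlled ingredients, at the price of the $\bfa^*\bfa$ detour (needed because the diagonal blocks of $\bfa$ itself need not be invertible). Your route is more direct and conceptually uniform, but requires carrying out the $\kappa/\bfkappa$-bookkeeping of Definition \ref{def:btilde} once more for the block case; as you rightly flag, this is the only point that needs genuine checking, and it does go through.
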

\begin{proof}
Let us set $\bfa:=\mathbf{1}+\sigma^{(0,\nu)}(\bfg)$. By assumption, $\bfa$ is pointwise 
invertible and there exist positive constants $c\le C$ such that 
 $$c\|u\|^2\le \|\bfa(x^\prime,\xi^\prime;\mu)u\|^2\le C\|u\|^2$$ 
uniformly in $u\in L^2(\rz_+,\cz^L)\oplus\cz^M$ and $x^\prime$ and $|\xi^\prime,\mu|=1$. 

Obviously, $\bfa^{-1}=[\bfa^*\bfa]^{-1}\bfa^*$. We write 
 $$\bfa^*\bfa =\mathbf{1}+\wt\bfg=C(\mathbf{1}-\bfs),\qquad s=1-C^{-1}\bfa^*\bfa;$$
here 
 $$\wt\bfg=\begin{pmatrix}\wt{g}&\wt{k}\\ \wt{t}&\wt{q}\end{pmatrix}\in \calB^{(0,\nu);0}_G((L,M),(L,M)).$$
Note that $\bfs(x^\prime,\xi^\prime;\mu)=1-C^{-1}(\bfa^*\bfa)(x^\prime,\xi^\prime;\mu)\le\delta$ with 
$\delta:=\frac{C-c}{C}<1$. Then  
\begin{align*} 
 0 &\le (\bfs_{11} u,u)_{L^2(\rz_+,\cz^L)} \\
   &=\left([1-C^{-1}\bfa^*\bfa]\begin{pmatrix}u\\0\end{pmatrix},
   \begin{pmatrix}u\\0\end{pmatrix}\right)_{L^2(\rz_+,\cz^L)\oplus\cz^M}
   \le \delta(u,u)_{L^2(\rz_+,\cz^L)}
\end{align*} 
shows that 
 $$\|\bfs_{11}(x^\prime,\xi^\prime;\mu)\|_{\scrL(L^2(\rz_+,\cz^L))}\le \delta
   \qquad\forall\;x^\prime\quad\forall\;|\xi^\prime,\mu|=1.$$
Therefore $1+\wt{g}$ is pointwise invertible with 
 $$\|(1+\wt{g})^{-1}\|=C^{-1}\|(1-\bfs_{11})^{-1}\|\le C^{-1}\frac{1}{1-\delta}=\frac{1}{c}
      \qquad\forall\;x^\prime\quad\forall\;|\xi^\prime,\mu|=1.$$
By Theorem \ref{thm:ellipticity-ulc}, there exists an 
$\wt{h}\in B^{(0,\nu);0}_G(L,L)$ with 
$(1+\wt{g})^{-1}=1+\wt{h}$.  

Similarly one sees that 
 $$\|\bfs_{22}(x^\prime,\xi^\prime;\mu)\|_{\scrL(\cz^M)}\le\frac{1}{c}  
   \qquad\forall\;x^\prime\quad\forall\;|\xi^\prime,\mu|=1.$$
It follows that 
 $$(1+\wt{q})^{-1}=1+\wt{p},\qquad \wt{p}\in S^{(0,\nu)}(\rz^{n-1}\times\rpbar;\cz^M,\cz^M).$$ 
Therefore $\bfa^*\bfa=\begin{pmatrix}1+\wt{g}&\wt{k}\\ \wt{t}&1+\wt{q}\end{pmatrix}$ is invertible with 
 $$(\bfa^*\bfa)^{-1}
   =\begin{pmatrix}1&0\\-(1+\wt{p})\wt{t}&1+\wt{p}\end{pmatrix} 
    \begin{pmatrix}1+\wt{h}&0\\0&1\end{pmatrix}
    \begin{pmatrix}1&-\wt{k}(1+\wt{p})\\0&1\end{pmatrix}=\mathbf{1}+\bfc,
   $$
where 
 $$\bfc=\begin{pmatrix}
         \wt{h}&-(1+\wt{h})\wt{k}(1+\wt{p})\\
         -(1+\wt{p})\wt{t}(1+\wt{h})&\wt{p}+(1+\wt{p})\wt{t}(1+\wt{h})\wt{k}(1+\wt{p})
        \end{pmatrix}
        \in \calB^{(0,\nu);0}_G((L,M),(L,M)).$$
We conclude the existence of an $\bfh^\prime \in \calB^{(0,\nu);0}_G((L,M^\prime),(L,M))$ such that 
 $$(\mathbf{1}+\sigma^{(0,\nu)}(\bfg))^{-1}=\mathbf{1}+\sigma^{(0,\nu)}(\bfh^\prime).$$ 
Now one proceeds as in the proof of Theorem \ref{thm:ellipticity-ulc} to construct the parametrix.  
\end{proof}

\subsection{Invertibility for large parameter}\label{sec:inv_large_param}

In the previous section we have constructed the parametrix, i.e., an inverse modulo regularizing 
operators. In the present section we shall show that elliptic elements are invertible for sufficiently 
large values of $\mu$ and that one can find a parametrix that furnishes the exact inverse for 
these values of $\mu$. 

\begin{lemma}\label{lem:invlarge01}
Let $E$ be a Hilbert space equipped with trivial group-action $\kappa\equiv1$. 
Let $a\in \wt{S}^{-\infty,\nu-\infty}(\rpbar\times\rz^n;E,E)$ with some $\nu>0$. 
Then there exists a symbol $b$ of the same form and a constant $R\ge0$ such that 
 $$(1-a)\#(1-b)=(1-a)\#(1-b)=1  \qquad \forall\;\mu\ge R.$$
\end{lemma}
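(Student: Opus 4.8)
The plan is to run a Neumann series argument, but with careful bookkeeping of the regularity index, exploiting that the underlying group-action is trivial so that the symbol estimates in $\wt S^{-\infty,\nu-\infty}(\rpbar\times\rz^n;E,E)$ reduce to the ordinary ones
 $$\|D^\beta_x D^\alpha_\xi D^j_\mu a(x,\xi;\mu)\|_{\scrL(E)}\preceq \spk{\xi}^{-L}\spk{\mu}^{-\nu-j}$$
for every $L\ge0$ (the equivalent form noted right after Definition \ref{def:weakly}). The key quantitative observation is that since $\nu>0$, the zero-th order estimate gives $\|a(x,\xi;\mu)\|_{\scrL(E)}\preceq \spk{\mu}^{-\nu}$, which tends to $0$ uniformly in $(x,\xi)$ as $\mu\to+\infty$.

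First I would estimate the operator norm of $\op(a)(\mu)$ on $L^2(\rz^n,E)$. Using that $a$ is a classical (indeed regularizing) symbol, one has $\|\op(a)(\mu)\|_{\scrL(L^2)}\preceq \sup_{x,\xi}\big(\text{finitely many $x,\xi$-derivatives of }a\big)$, each of which is $\preceq \spk{\mu}^{-\nu}$; hence $\|\op(a)(\mu)\|_{\scrL(L^2)}\le C\spk{\mu}^{-\nu}$. Choose $R\ge0$ so large that $C\spk{R}^{-\nu}\le 1/2$; then for $\mu\ge R$ the operator $1-\op(a)(\mu)$ is invertible on $L^2$ with inverse $\sum_{k\ge0}\op(a)(\mu)^k$, and the series converges in $\scrL(L^2)$ uniformly for $\mu\ge R$. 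By \eqref{eq:Leibniz}, $\op(a)(\mu)^k=\op(a^{\#k})(\mu)$, and by Theorem \ref{thm:comp02} the iterated Leibniz products satisfy $a^{\#k}\in\wt S^{-\infty,k\nu-\infty}$; more usefully, tracking constants in \eqref{eq:comp_exp}/Example \ref{ex:Leibniz}, one gets a bound of the form (fixed seminorm of $a^{\#k}$ in $\wt S^{-\infty,\nu-\infty}$) $\preceq C_0^{k}\spk{\mu}^{-(k-1)\nu}$ with $C_0$ depending only on finitely many seminorms of $a$. Enlarging $R$ if necessary so that $C_0\spk{R}^{-\nu}<1$, the series $b:=-\sum_{k\ge1}a^{\#k}$ converges in $\wt S^{-\infty,\nu-\infty}(\rpbar\times\rz^n;E,E)$, uniformly for $\mu\ge R$, and $1-b=\sum_{k\ge0}a^{\#k}$.

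It remains to check the two identities. For $\mu\ge R$ we have, at the level of operators on $L^2$ (or on $\scrS(\rz^n,E)$), $(1-\op(a)(\mu))(1-\op(b)(\mu))=(1-\op(a)(\mu))\sum_{k\ge0}\op(a)(\mu)^k=1$, and likewise on the other side; by \eqref{eq:Leibniz} this says $\op((1-a)\#(1-b))(\mu)=\op(1)(\mu)=\mathrm{id}$, and since $\op(\cdot)$ is injective on symbols we conclude $(1-a)\#(1-b)=(1-b)\#(1-a)=1$ for all $\mu\ge R$. (The displayed statement in the Lemma has a typo: the left-hand side should read $(1-b)\#(1-a)$ in one of the two places.)

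The main obstacle is the uniformity of the symbol-seminorm bounds on $a^{\#k}$ in $\mu$: one must verify that each successive Leibniz product contributes a genuine extra factor $\spk{\mu}^{-\nu}$ (not merely $\spk{\mu}^{0}$) in every seminorm, with a geometric growth $C_0^k$ in the implied constants. This follows by induction from the oscillatory-integral formula for $\#$ together with the Peetre-type inequality $\spk{\xi}\preceq\spk{\xi-\eta}\spk{\eta}$ used to absorb the $y,\eta$-integration, but it is the step that requires genuine care; everything else is routine once it is in place.
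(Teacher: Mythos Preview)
Your approach is genuinely different from the paper's, and the point you flag as ``the main obstacle'' is in fact a real gap that your proposed justification does not close.

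The difficulty is the seminorm shift inherent in the Leibniz product. To bound a fixed seminorm of $a\#b$ via the oscillatory-integral formula one integrates by parts a fixed number of times in $y$ and in $\eta$; this brings in additional $x$-derivatives of one factor and additional $\xi$-derivatives of the other. Concretely, estimating $\spk{\xi}^{L}\|D^\alpha_\xi D^\beta_x(a\#b)\|$ requires seminorms of $a$ and $b$ of strictly higher order (the number of extra $x$-derivatives even depends on $L$). Iterating, a bound on a fixed seminorm of $a^{\#k}$ involves seminorms of $a$ whose order grows linearly in $k$. There is no a~priori control on how fast these higher seminorms of a fixed $a$ grow, so one cannot conclude a geometric estimate $p(a^{\#k})\le C_0^{k}\spk{\mu}^{-(k-1)\nu}$ with a single $C_0$; the induction you propose does not close. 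In other words, the Fr\'echet algebra $S^{-\infty}(\rz^n;E,E)$ under $\#$ is not obviously $m$-convex, and absent that structure a naked Neumann-series argument in the symbol topology is not available.

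The paper circumvents this by working at the operator level and invoking spectral invariance. One first identifies $\wt S^{-\infty,\nu-\infty}(\rpbar\times\rz^n;E,E)=S^{-\nu}_{1,0}(\rpbar;S^{-\infty}(\rz^n;E,E))$, i.e.\ views $a$ as a smooth $S^{-\infty}$-valued function of $\mu$ with $\spk{\mu}^{-\nu}$ decay. By Corollary~\ref{cor:specinv02} (which rests on the Beals commutator characterization, Theorem~\ref{thm:Beals}), the algebra $\calA=\{1+\op(r):r\in S^{-\infty}(\rz^n;E,E)\}$ is spectrally invariant in $\scrL(L^2(\rz^n,E))$, hence is a Fr\'echet algebra with open unit group and \emph{continuous} inversion (Waelbroeck). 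Since $a(\mu)\to 0$ in $S^{-\infty}$ as $\mu\to\infty$, $(1-a(\mu))^{-\#}$ exists for $\mu\ge R$ and, after multiplication by a cut-off $\chi(\mu)$, lies in $S^{0}_{1,0}(\rpbar;S^{-\infty})$. The correct $\mu$-decay is then recovered from $(1-a)^{-\#}=1+a\#(1-a)^{-\#}$, giving $b=-a\#b_0\in S^{-\nu}_{1,0}(\rpbar;S^{-\infty})$. This route trades the delicate direct summation for two robust structural facts (Beals and Waelbroeck), and is precisely what makes the argument go through.

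A minor additional point: even if your series converged, it would only define $b$ for $\mu\ge R$; you still need a cut-off in $\mu$ to obtain a symbol on all of $\rpbar$, as the paper does.
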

\begin{proof}
Note that we have the identification 
 $$\wt{S}^{-\infty,\nu-\infty}(\rpbar\times\rz^n;E,E)
   =S^{-\nu}_{1,0}(\rpbar;S^{-\infty}(\rz^n;E,E)).$$
As shown in Corollary \ref{cor:specinv02} of the appendix, 
 $$\calA:=\{1+\op(r)\mid r\in S^{-\infty}(\rz^n;E,E)\}\subset\scrL(L^2(\rz^n,E))$$
is a spectrally invariant Fr\'echet algebra $($with multiplication $\#$, i.e., 
the Leibniz product$)$. In particular, $\calA$ has an open group of invertible 
elements and inversion is a continuous operation in $\calA$ due to a classical result of 
Waelbroeck \cite{Wa3}. 

 It follows that $(1-a(\mu))^{-\#}$ $($i.e., the inverse with respect to the product $\#)$ 
 exists for $\mu\ge R$ for some suitable $R$ and that 
 $b_0(\mu):=\chi(\mu)(1-a(\mu))^{-\#}\in S^0_{1,0}(\rpbar;S^{-\infty}(\rz^n;E,E))$ 
 if $\chi$ is a $0$-excision function vanishing on $[0,R]$. 
 But then the identity $(1-T)^{-1}=1+T(1-T)^{-1}$ shows that, for large $\mu$, 
 the inverse of $1-a(\mu)$ coincides with $1-b(\mu)$, where 
$b(\mu)=-a(\mu)\#b_0(\mu)\in S^{-\nu}_{1,0}(\rpbar;S^{-\infty}(\rz^n;E,E))$. 
\end{proof}

\begin{proposition}\label{prop:inv02}
Let $\bfg\in \calB^{-\infty,\nu-\infty;0}_G((L,M),(L,M))$ with $\nu>0$.
Then there exists a $\bfh\in \calB^{-\infty,\nu-\infty;0}_G((L,M),(L,M))$ 
and a constant $R\ge0$ such that 
\begin{equation}\label{eq:inv01}
 (1-\bfg)\#(1-\bfh)=(1-\bfh)\#(1-\bfg)=1  \qquad \forall\;\mu\ge R.
\end{equation}
\end{proposition}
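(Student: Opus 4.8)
The plan is to reduce Proposition \ref{prop:inv02} to Lemma \ref{lem:invlarge01} by realizing the matrix-valued Green symbol $\bfg$ as an operator-valued regularizing symbol on a single Hilbert space equipped with the \emph{trivial} group-action, and then applying the spectral-invariance argument already set up in that lemma. First I would fix $s_0>-1/2$ (for instance $s_0=0$, i.e.\ work in $L^2$) and put
$$E:=H^{s_0}(\rz_+,\cz^L)\oplus\cz^M,$$
but equipped with the \emph{trivial} group-action $\kappa\equiv 1$, not the standard one. The point is that on the regularizing level the group-action is irrelevant: by the very definition of $\calB^{-\infty,\nu-\infty;0}_G$ together with the description of $\wt S^{d-\infty,\nu-\infty}$ given right after Definition \ref{def:weakly} (where $\kappa$ may be dropped since $N$ is arbitrary and $\|\kappa_\lambda\|$ grows only polynomially), one has the identification
$$\calB^{-\infty,\nu-\infty;0}_G((L,M),(L,M))
  =\wt S^{-\infty,\nu-\infty}(\rz^{n-1}\times\rpbar;E,E)_{\kappa\equiv1,\kappa\equiv1},$$
i.e.\ precisely the class of symbols to which Lemma \ref{lem:invlarge01} applies (with $n$ there replaced by $n-1$).

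The second step is then essentially a citation: apply Lemma \ref{lem:invlarge01} with this $E$ to the symbol $a:=\bfg$. It produces a symbol $\bfh$ in $\wt S^{-\infty,\nu-\infty}(\rz^{n-1}\times\rpbar;E,E)$ and a constant $R\ge 0$ with
$$(1-\bfg)\#(1-\bfh)=(1-\bfh)\#(1-\bfg)=1\qquad\forall\,\mu\ge R$$
as an identity of operators on $\scrS(\rz^{n-1},E)$. It remains to check that this $\bfh$, a priori only an element of the ``trivial-$\kappa$'' class, actually lies in $\calB^{-\infty,\nu-\infty;0}_G((L,M),(L,M))$, i.e.\ satisfies the four families of estimates of Definition \ref{def:btilde} (with the various $H^s$, $\calK^{s,0}$ spaces and the standard group-action) with order $-\infty$ and regularity $\nu-\infty$. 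This is immediate: the defining estimates of $\calB^{-\infty,\nu-\infty;0}_G$ differ from those of the trivial-$\kappa$ class only by inserting factors $\kappa^{-1}(\xi')$ or $\kappa^{-1}(\xi',\mu)$ (and their inverses) acting on the fixed standard scales, and by \eqref{eq:M-kappa} these factors contribute at most a polynomial in $\spk{\xi',\mu}$, which is harmless against the arbitrarily fast decay $\spk{\xi'}^{-N}\spk{\mu}^{-N}$ available in the regularizing class. Concretely: the inverse $\bfh$ constructed in Lemma \ref{lem:invlarge01} is of the form $b(\mu)=-a(\mu)\#b_0(\mu)$ with $b_0(\mu)=\chi(\mu)(1-a(\mu))^{-\#}\in S^0_{1,0}(\rpbar;S^{-\infty}(\rz^{n-1};E,E))$, and since the Leibniz product and the smoothing symbols $S^{-\infty}(\rz^{n-1};E,E)$ do not see the group-action at all, $b$ automatically satisfies the same estimates as $\bfg$ in each of the four scales of Definition \ref{def:btilde}.

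The only genuine subtlety — and the step I would treat most carefully — is the spectral invariance input, i.e.\ Corollary \ref{cor:specinv02} of the appendix, which must be applicable with $E$ a Hilbert space with \emph{trivial} group-action: one needs $\calA=\{1+\op(r)\mid r\in S^{-\infty}(\rz^{n-1};E,E)\}$ to be a spectrally invariant Fr\'echet subalgebra of $\scrL(L^2(\rz^{n-1},E))$. This is exactly what Lemma \ref{lem:invlarge01} already invokes, so for the present proposition there is nothing new to prove here; one simply records that the hypotheses of that lemma are met. Thus the structure of the write-up is: (i) identify $\calB^{-\infty,\nu-\infty;0}_G$ with a trivial-$\kappa$ regularizing symbol class on $E$; (ii) quote Lemma \ref{lem:invlarge01} to get $\bfh$ and $R$; (iii) observe that the resulting $\bfh$, being built from smoothing operators via $\#$, lies back in $\calB^{-\infty,\nu-\infty;0}_G$ because the standard group-actions only produce polynomially bounded conjugation factors that are absorbed by the $-\infty$ decay. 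I expect step (iii)'s bookkeeping across the four Hilbert-space scales to be the main (though routine) obstacle, while step (i) is where one must be slightly careful to phrase the identification of symbol classes correctly.

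\begin{proof}
We may take $s_0=0$ and set $E:=L^2(\rz_+,\cz^L)\oplus\cz^M$, equipped with the \emph{trivial} group-action $\kappa\equiv 1$. As observed after Definition \ref{def:weakly}, in the regularizing class the group-action is irrelevant: by \eqref{eq:M-kappa} any conjugation by $\kappa_\lambda^{\pm1}$ costs at most a factor $\max(\lambda,\lambda^{-1})^{M}$, which is dominated by the arbitrarily fast decay available. Consequently, unravelling Definition \ref{def:btilde} in the case of order $-\infty$ and regularity $\nu-\infty$, every $\bfg\in\calB^{-\infty,\nu-\infty;0}_G((L,M),(L,M))$ satisfies, for all $N\ge0$ and all orders of derivatives,
\begin{equation}\label{eq:inv02}
 \|D^\alpha_{\xi^\prime}D^\beta_{x^\prime}D^j_\mu\bfg(x^\prime,\xi^\prime;\mu)\|_{\scrL(E)}
   \preceq \spk{\xi^\prime}^{-N}\spk{\mu}^{-N},
\end{equation}
and, conversely, a symbol satisfying \eqref{eq:inv02} together with the analogous estimates in the scales $H^{s}$, $\calK^{s,0}$ appearing in Definition \ref{def:btilde} belongs to $\calB^{-\infty,\nu-\infty;0}_G$. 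In particular we have the identification
$$\calB^{-\infty,\nu-\infty;0}_G((L,M),(L,M))
  \subset \wt{S}^{-\infty,\nu-\infty}(\rpbar\times\rz^{n-1};E,E)$$
with $E$ carrying the trivial group-action, and the latter space is exactly the one to which Lemma \ref{lem:invlarge01} applies (with $n$ there replaced by $n-1$).

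Now apply Lemma \ref{lem:invlarge01} to $a:=\bfg$. This yields a constant $R\ge0$, a $0$-excision function $\chi$ vanishing on $[0,R]$, and, using the identification $\wt{S}^{-\infty,\nu-\infty}(\rpbar\times\rz^{n-1};E,E)=S^{-\nu}_{1,0}(\rpbar;S^{-\infty}(\rz^{n-1};E,E))$, a symbol
$$\bfh(\mu)=-\bfg(\mu)\#\bfh_0(\mu),\qquad
  \bfh_0(\mu)=\chi(\mu)\,(1-\bfg(\mu))^{-\#}\in S^0_{1,0}(\rpbar;S^{-\infty}(\rz^{n-1};E,E)),$$
such that
$$(1-\bfg)\#(1-\bfh)=(1-\bfh)\#(1-\bfg)=1\qquad\forall\,\mu\ge R.$$

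It remains to check $\bfh\in\calB^{-\infty,\nu-\infty;0}_G((L,M),(L,M))$. The smoothing symbols $S^{-\infty}(\rz^{n-1};E,E)$ do not depend on the group-action of $E$ (see the discussion after Definition \ref{def:b} of the excerpt, i.e.\ Schulze-type regularizing symbols), and the Leibniz product $\#$ is defined purely in terms of the oscillatory integral of Example \ref{ex:Leibniz} and again does not involve $\kappa$. Hence $\bfh_0$, and therefore $\bfh=-\bfg\#\bfh_0$, satisfies the estimates \eqref{eq:inv02}; moreover, since $\bfh_0(\mu)$ maps $L^2(\rz_+,\cz^L)\oplus\cz^M$ into $\scrS(\rz_+,\cz^L)\oplus\cz^M$ (smoothing in the fibre as well), the same argument yields the corresponding estimates in each of the four scales listed in Definition \ref{def:btilde}, with the standard group-action now reinstated and its polynomially bounded conjugation factors absorbed by the arbitrarily fast decay. (The passage between $H^{s}_0(\rpbar)$ and $H^{s}(\rz_+)$, respectively between $\calK^{s,0}(\rz_+)^\delta$ and $H^{s,\delta}(\rz_+)$, is handled exactly as in the proof of Proposition \ref{prop:incl}, using that a smoothing fibre operator maps into $\scrS(\rz_+)$, which embeds continuously into all these scales.) Therefore $\bfh\in\calB^{-\infty,\nu-\infty;0}_G((L,M),(L,M))$, and \eqref{eq:inv01} holds with this $\bfh$ and this $R$.
\end{proof}
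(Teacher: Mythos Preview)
Your overall plan matches the paper's: reduce to Lemma \ref{lem:invlarge01} on $E=L^2(\rz_+,\cz^L)\oplus\cz^M$ with the trivial group-action, obtain an inverse there, and then argue that the resulting symbol actually lies in $\calB^{-\infty,\nu-\infty;0}_G$. The first two steps are fine. The gap is in step (iii).

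You write $\bfh=-\bfg\#\bfh_0$ with $\bfh_0=\chi(\mu)(1-\bfg(\mu))^{-\#}\in S^0_{1,0}(\rpbar;S^{-\infty}(\rz^{n-1};E,E))$, and then assert that ``$\bfh_0(\mu)$ maps $L^2(\rz_+,\cz^L)\oplus\cz^M$ into $\scrS(\rz_+,\cz^L)\oplus\cz^M$ (smoothing in the fibre as well)''. This is not justified and in general not true: the spectral-invariance input (Corollary \ref{cor:specinv02}) only tells you that the inverse symbol is smoothing in $\xi'$ as an $\scrL(E)$-valued function; it says nothing about smoothing in the normal variable $x_n$. Consequently $\bfh_0(x',\xi';\mu)$ is merely a bounded operator on $L^2(\rz_+,\cz^L)\oplus\cz^M$, and your $\bfh=-\bfg\#\bfh_0$ has $\bfg$ only on the \emph{left}. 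That controls the image, but the domain is still limited to $L^2$: you have no way to feed $\bfh$ inputs from $H^{s}_0(\rpbar,\cz^L)$ or $\calK^{s,0}(\rz_+,\cz^L)^\delta$ with $s<0$, which Definition \ref{def:btilde} requires for membership in $\calB^{-\infty,\nu-\infty;0}_G$. The reference to Proposition \ref{prop:incl} does not help here, since that proposition goes in the opposite direction (from the strong class into the weak one) and presupposes the full scale of estimates to begin with.

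The paper fixes exactly this point with a sandwich identity: from $(1-\bfg)^{-\#}=1+\bfg+\bfg\#(1-\bfg)^{-\#}\#\bfg$ one gets, for large $\mu$, that the inverse $1-\bfh_0$ agrees with $1-\bfh$ where
$$\bfh:=-\bfg\mp\bfg\#\bfg\pm\bfg\#\bfh_0\#\bfg,$$
i.e.\ every term either equals $\pm\bfg$, $\pm\bfg\#\bfg$, or is of the form $\bfg\#(\cdot)\#\bfg$. Now $\bfg$ flanks $\bfh_0$ on \emph{both} sides, so the rightmost $\bfg$ accepts inputs from all four scales of Definition \ref{def:btilde} and lands in $L^2$, $\bfh_0$ acts there, and the leftmost $\bfg$ produces outputs in all four scales. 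This is the missing idea in your argument; once you insert it, the rest of your write-up goes through.
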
 
\begin{proof}
Note that 
 $$\bfg\in \wt{S}^{-\infty,\nu-\infty}(\rpbar\times\rz^n;
   L^2(\rz,\cz^L)\oplus\cz^M,L^2(\rz,\cz^L)\oplus\cz^M))$$
and recall that $\kappa$ is a group of unitary operators on $L^2(\rz_+,\cz^L)\oplus\cz^M$, 
hence can be replaced here by the trivial group-action. According to Lemma \ref{lem:invlarge01} 
we find an $\bfh_0$ from the same space which inverts $1-\bfg$ in the sense of \eqref{eq:inv01}. 
Thus, for large $\mu$, 
 $$1-\bfh_0=(1-\bfg)^{-\#}=1+\bfg-\bfg\#(1-\bfg)^{-\#}\#\bfg= 1+\bfg-\bfg\#\bfg+\bfg\#\bfh_0\#\bfg.$$
Using Definition \ref{def:btilde} and the calculus of operator-valued symbols, it is clear that 
 $$\bfh:=-\bfg+\bfg\#\bfg-\bfg\#\bfh_0\#\bfg$$ 
belongs to $B^{-\infty,\nu-\infty;0}_G((L,M),(L,M))$. The result follows, 
since $1-\bfh=1-\bfh_0$ for large $\mu$. 
\end{proof}

\begin{theorem}
Using the notation of Theorem $\ref{thm:ellipticity}$, there exists a parametrix 
$\boldsymbol{1}+\bfh_0$ that coincides, for sufficiently large values of $\mu$, with the inverse 
$($with respect to the Leibniz product$)$ of $\boldsymbol{1}+\bfg$. 
\end{theorem}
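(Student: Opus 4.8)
The plan is to combine the parametrix already constructed in Theorem \ref{thm:ellipticity} with the large-parameter invertibility statement of Proposition \ref{prop:inv02}. First I would invoke Theorem \ref{thm:ellipticity} to obtain a parametrix $\boldsymbol{1}+\bfh$ with $\bfh\in\calB^{0,\nu;0}_G((L,M),(L,M))$ such that both
$$\bfr_R:=(\boldsymbol{1}+\bfg)\#(\boldsymbol{1}+\bfh)-\boldsymbol{1},\qquad
\bfr_L:=(\boldsymbol{1}+\bfh)\#(\boldsymbol{1}+\bfg)-\boldsymbol{1}$$
lie in $\calB^{-\infty,\nu-\infty;0}_G((L,M),(L,M))$. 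Since $\nu>0$, Proposition \ref{prop:inv02} applies to $\boldsymbol{1}+\bfr_R$: there is a constant $R\ge0$ and a symbol $\bfs\in\calB^{-\infty,\nu-\infty;0}_G((L,M),(L,M))$ with $(\boldsymbol{1}+\bfr_R)\#(\boldsymbol{1}+\bfs)=(\boldsymbol{1}+\bfs)\#(\boldsymbol{1}+\bfr_R)=\boldsymbol{1}$ for all $\mu\ge R$ (here I use $-\bfs$ in place of the $\bfh$ of that proposition, and likewise read $\bfr_R$ for the $\bfg$ there).

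Next I would set $\boldsymbol{1}+\bfh_0:=(\boldsymbol{1}+\bfh)\#(\boldsymbol{1}+\bfs)$. By Theorem \ref{thm:comp_type0} the composition stays in $\calB^{0,\nu;0}_G((L,M),(L,M))$, and because $\bfs$ is of order and regularity $-\infty$, the difference $\bfh_0-\bfh$ is itself regularizing; in particular $\boldsymbol{1}+\bfh_0$ differs from the parametrix $\boldsymbol{1}+\bfh$ only by an element of $\calB^{-\infty,\nu-\infty;0}_G$, so it is again a parametrix of $\boldsymbol{1}+\bfg$. For $\mu\ge R$ we compute
$$(\boldsymbol{1}+\bfg)\#(\boldsymbol{1}+\bfh_0)
=(\boldsymbol{1}+\bfg)\#(\boldsymbol{1}+\bfh)\#(\boldsymbol{1}+\bfs)
=(\boldsymbol{1}+\bfr_R)\#(\boldsymbol{1}+\bfs)=\boldsymbol{1},$$
so $\boldsymbol{1}+\bfh_0$ is an exact right inverse of $\boldsymbol{1}+\bfg$ for large $\mu$. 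Performing the symmetric construction from the left gives a symbol $\boldsymbol{1}+\bfh_0^{L}$ with $(\boldsymbol{1}+\bfh_0^{L})\#(\boldsymbol{1}+\bfg)=\boldsymbol{1}$ for $\mu$ large; the usual argument (a one-sided inverse and a one-sided inverse from the other side agree) shows $\bfh_0^{L}$ and $\bfh_0$ coincide for large $\mu$, hence $\boldsymbol{1}+\bfh_0$ is a two-sided inverse there while remaining a global parametrix.

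I do not expect a serious obstacle; the only point requiring a little care is bookkeeping on the symbol classes. One must check that Proposition \ref{prop:inv02} is genuinely applicable — i.e., that the remainder $\bfr_R$ produced by Theorem \ref{thm:ellipticity} lies in $\calB^{-\infty,\nu-\infty;0}_G$ with the \emph{same} $\nu>0$, which it does by construction — and that the correction $\bfs$ returned there, when composed into $\bfh_0$ via Theorem \ref{thm:comp_type0}, indeed only alters $\bfh$ by a $\calB^{-\infty,\nu-\infty;0}_G$-term, so that the parametrix property is preserved. Both are immediate from the ideal property of the regularizing class under the Leibniz product. The remaining steps are purely formal manipulations with $\#$ and associativity, which hold by \eqref{eq:Leibniz}.
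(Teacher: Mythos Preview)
Your proposal is correct and follows essentially the same route as the paper: take the parametrix $\boldsymbol{1}+\bfh$ from Theorem~\ref{thm:ellipticity}, apply Proposition~\ref{prop:inv02} to invert the regularizing remainder $\boldsymbol{1}+\bfr_R$ for large $\mu$, and set $\boldsymbol{1}+\bfh_0=(\boldsymbol{1}+\bfh)\#(\boldsymbol{1}+\bfr_R)^{-\#}$. Your version is slightly more explicit in verifying the two-sided inverse and the preservation of the parametrix property, but the underlying argument is identical.
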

\begin{proof}
Take $\bfh$ from Theorem $\ref{thm:ellipticity}$. Then 
$(\mathbf{1}+\bfg)\#(\mathbf{1}+\bfh)=1-\bfr_0$ with $\bfr\in \calB^{-\infty,\nu-\infty;0}_G((L,M),(L,M))$ 
and similarly $(\mathbf{1}+\bfh)\#(\mathbf{1}+\bfg)=1+\bfr_1$. 
In view of Proposition \ref{prop:inv02} it suffices to take $\bfh_0$ such that 
$\mathbf{1}+\bfh_0=(\mathbf{1}+\bfh)\#(1+\bfr_0)^{-\#}$. 
\end{proof}

\section{Singular Green symbols of non trivial type}\label{sec:green03}
 
Recall that the maps 
 $$u\mapsto \frac{d^ju}{dt^j}(0):\scrS(\rz_+,\cz^L)\lra\cz^L,\qquad j\in\nz_0,$$
extend by continuity to maps 
 $$\gamma_j:H^{s,\delta}(\rz_+,\cz^L)\lra\cz^L,\qquad s>j+\frac{1}{2},\;\delta\in\rz,$$
(we shall use the same notation, independent on the value of $L)$. 
We can consider $\gamma_j$ as an operator-valued symbol, constant in $(x^\prime,\xi^\prime;\mu)$; 
the following result is well-known, cf. \cite[Example 1.5]{Schr01}, for instance. 

\begin{proposition}\label{prop:trace}
For every $j\in\nz_0$, 
$\gamma_j$ is $\kappa$-homogeneous of degree $j+\frac{1}{2}$. In particular, 
 $$\gamma_j \in S^{j+\frac{1}{2}}(H^{s,\delta}(\rz_+,\cz^L),\cz^L),\qquad s>j+\frac{1}{2},\;\delta\in\rz.$$  
\end{proposition}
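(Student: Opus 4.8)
The plan is to establish the $\kappa$-homogeneity directly from the definition of the standard group-action in Example~\ref{ex:standard-group-action} and then read off the symbol estimate from the known mapping properties of the trace maps on the Sobolev scale. First I would compute, for $u\in\scrS(\rz_+,\cz^L)$ and $\lambda>0$, the effect of $\gamma_j$ on $\kappa_\lambda u$. Since $(\kappa_\lambda u)(t)=\lambda^{1/2}u(\lambda t)$, the chain rule gives $\frac{d^j}{dt^j}(\kappa_\lambda u)(t)=\lambda^{1/2}\lambda^j u^{(j)}(\lambda t)$, hence $\gamma_j(\kappa_\lambda u)=\lambda^{j+1/2}\gamma_j(u)$. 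Because $\gamma_j$ takes values in $\cz^L$, which carries the trivial group-action $\kappa\equiv1$, this is exactly the identity $\gamma_j(\kappa_{1,\lambda}u)=\lambda^{j+1/2}\kappa_{0,\lambda}^{(\rz)}\,\gamma_j(\kappa_{1,\lambda}^{-1}\kappa_{1,\lambda}u)$ rewritten as
$$\gamma_j = \lambda^{-(j+1/2)}\,\kappa_{0,\lambda}^{(\cz^L)}\,\gamma_j\,\kappa_{1,\lambda}^{(H^{s,\delta})},$$
which is precisely the statement that the constant (in $(x',\xi';\mu)$) symbol $\gamma_j$ is $\kappa$-homogeneous of degree $j+\tfrac12$. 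Passing from $\scrS(\rz_+,\cz^L)$ to all of $H^{s,\delta}(\rz_+,\cz^L)$ with $s>j+\tfrac12$ is justified by density together with the continuity of $\gamma_j$ on that scale, which was recalled just before the proposition.

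Next I would verify that $\gamma_j$ satisfies the symbol estimates of Definition~\ref{def:b}/the class $S^{j+1/2}$. Since $\gamma_j$ does not depend on $x'$, $\xi'$ or $\mu$, all derivatives $D^\alpha_{\xi'}D^\beta_{x'}D^k_\mu\gamma_j$ vanish for $(\alpha,\beta,k)\neq0$, so only the term with no derivatives needs to be controlled. There one has to bound
$$\|\kappa_0^{-1}(\xi',\mu)\,\gamma_j\,\kappa_1(\xi',\mu)\|_{\scrL(H^{s,\delta}(\rz_+,\cz^L),\cz^L)}.$$
Using the homogeneity identity just derived with $\lambda=\spk{\xi',\mu}$, this equals $\spk{\xi',\mu}^{j+1/2}\,\|\gamma_j\|_{\scrL(H^{s,\delta}(\rz_+,\cz^L),\cz^L)}$, and the operator norm of $\gamma_j$ on $H^{s,\delta}$ is a finite constant for each fixed $s>j+\tfrac12$, $\delta\in\rz$ (this is exactly the content of the extension statement preceding the proposition). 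Hence the required bound $\preceq\spk{\xi',\mu}^{j+1/2}$ holds, and the derivative bounds are trivially satisfied; this puts $\gamma_j$ in $S^{j+1/2}(H^{s,\delta}(\rz_+,\cz^L),\cz^L)$, and since its principal symbol is $\gamma_j$ itself it is in fact classical.

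I do not expect a genuine obstacle here: the argument is essentially a bookkeeping exercise matching the explicit scaling $\lambda^{1/2}\phi(\lambda t)$ of the standard group-action against the $j$ derivatives in $\gamma_j$. The one point requiring a little care is the rigorous passage from $\scrS(\rz_+,\cz^L)$ to $H^{s,\delta}(\rz_+,\cz^L)$: one must confirm that both sides of the homogeneity identity are continuous on $H^{s,\delta}$ (the left side because $\gamma_j$ is bounded there for $s>j+\tfrac12$, the right side because $\kappa_\lambda$ preserves every $H^{s,\delta}$ and acts continuously), and that $\scrS(\rz_+,\cz^L)$ is dense, so the identity extends by continuity. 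Once that is in place the proposition follows immediately.
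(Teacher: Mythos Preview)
Your proposal is correct and is exactly the standard argument; the paper itself does not give a proof but simply cites \cite[Example 1.5]{Schr01}, where precisely this computation (chain rule on $(\kappa_\lambda u)(t)=\lambda^{1/2}u(\lambda t)$ followed by the observation that a constant $\kappa$-homogeneous symbol automatically satisfies the required estimates) is carried out. There is nothing to add.
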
  

It is convenient to introduce the notation 
$\displaystyle \boldsymbol{\gamma}_j
:=\begin{pmatrix}0&0\\\gamma_j&0\end{pmatrix}$.

\begin{definition}\label{def:type}
Let $r\in\nz$ be a positive integer. Then 
 $$\calB^{d,\nu;r}_G(\rz^{n-1}\times\rpbar;(L,M),(L^\prime,M^\prime))
     =\calB^{d,\nu;r}_G((L,M),(L^\prime,M^\prime))$$
denotes the space of all symbols of the form 
 $$\bfg=\bfg_0+\sum_{j=0}^{r-1}\begin{pmatrix}0& k_j\\ 0& q_j\end{pmatrix}
     \boldsymbol{\gamma}_j
     =\bfg_0+\sum_{j=0}^{r-1} 
     \begin{pmatrix}k_j\gamma_j&0\\ q_j\gamma_j&0\end{pmatrix},$$
where $\bfg_0\in \calB^{d,\nu;0}_G((L,M),(L^\prime,M^\prime))$ and  
   $\begin{pmatrix}0& k_j\\ 0& q_j\end{pmatrix}\in 
     \calB^{d-j-\frac{1}{2},\nu;0}_G((L,L),(L^\prime,M^\prime))$. 
The number $r$ is called the type of the symbol $\bfg$.  
\end{definition}

For a symbol $\bfg$ as in the previous definition, the principal symbol is defined as 
 $$\sigma^{(d,\nu)}(\bfg)=
     \sigma^{(d,\nu)}(\bfg_0)+\sum_{j=0}^{r-1}
     \begin{pmatrix} 0&\sigma^{(d,\nu)}(k_j)\\ 0&\sigma^{(d,\nu)}(q_j)\end{pmatrix}
     \boldsymbol{\gamma}_j,$$

There is another, equivalent definition of symbols of type $r>0$: 
If $\partial_+$ is the operator of differentiation defined on $\scrD^\prime(\rz_+,\cz^L)$, 
it can be checked easily that $\partial_+$ induces constant operator-valued symbols 
\begin{align*}
\boldsymbol{\partial_+}
:=\begin{pmatrix}\partial_+&0\\0&1\end{pmatrix}\in  S^1(H^{s,\delta}(\rz_+,\cz^L)\oplus\cz^M,
H^{s-1,\delta}(\rz_+,\cz^L)\oplus\cz^{M})
\end{align*}
for every $s,\delta\in\rz$. Then $\calB^{d,\nu;r}_G((L,M),(L^\prime,M^\prime))$ consists of all symbols of the form 
 $$\bfg=\bfg_0+\sum_{j=1}^{r}\bfg_j\boldsymbol{\partial}_+^j,\qquad 
   \bfg_j\in \calB^{d-j,\nu;0}_G((L,M),(L^\prime,M^\prime)).$$
Using integration by parts, symbols from Definition \ref{def:type} can be represented in this 
form, cf. \cite[Example 3.10]{Schr01}, for instance. 
For the reverse direction one needs to use a characterization of 
singular Green, Poisson, and trace symbols in terms of parameter-dependent integral kernels 
$($as we have seen in Theorem \ref{thm:charact-weakly} for Poisson symbols). 
However, since we shall not make use of this alternative representation, we shall not proof this equivalence here. 

\begin{lemma}\label{lem:dg}
Let $\bfg_0\in \calB^{d_0,\nu_0;0}_G((L_0,M_0),(L_{1},M_{1}))$ and 
$\bfg_1\in \calB^{d_1,\nu_1;0}_G((L_1,L_1),(L_{2},M_{2}))$. Then 
 $$(\bfg_1\boldsymbol{\gamma}_k)\#\bfg_0\in \calB^{d_0+d_1+k+\frac{1}{2},\nu;0}_G((L_0,M_0),(L_{2},M_{2})),
     \qquad \nu=\min\{\nu_0,\nu_1,\nu_0+\nu_1\}.$$
\end{lemma}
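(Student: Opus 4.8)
The key observation is that the composite $(\bfg_1\boldsymbol{\gamma}_k)\#\bfg_0$ can be rewritten as $\bfg_1\#(\boldsymbol{\gamma}_k\#\bfg_0)$ once one checks that $\boldsymbol{\gamma}_k$ is an operator-valued symbol of the appropriate class, and then the result follows by applying the composition theorem for singular Green symbols (Theorem~\ref{thm:comp_type0}) twice, keeping careful track of orders and regularities. So the first step is to identify the classes of the three factors: by Proposition~\ref{prop:trace}, $\gamma_k\in S^{k+\frac12}(H^{s,\delta}(\rz_+,\cz^{L_1}),\cz^{L_1})$ for $s>k+\frac12$, hence $\boldsymbol{\gamma}_k=\begin{pmatrix}0&0\\\gamma_k&0\end{pmatrix}$ defines a constant (in $(x',\xi';\mu)$) operator-valued symbol which, by \eqref{eq:inclusion}, also lies in $\wt S^{k+\frac12,0}_{1,0}(\cdots)_{\bfkappa,\bfkappa}$. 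In particular $\boldsymbol{\gamma}_k$ has order $k+\frac12$ and infinite regularity, so it belongs to $\calB^{k+\frac12,+\infty;0}_G$-type estimates trivially (it is strongly parameter-dependent).

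Next I would observe that $\boldsymbol{\gamma}_k$ maps into $\cz^{L_1}\oplus\{0\}\subset H^{s'}(\rz_+,\cz^{L_1})\oplus\cz^{M}$ for any $s'$, so the composite $\boldsymbol{\gamma}_k\#\bfg_0$ is a composition of a strongly parameter-dependent symbol of order $k+\frac12$ with $\bfg_0\in\calB^{d_0,\nu_0;0}_G((L_0,M_0),(L_1,M_1))$; however I must be slightly careful, because $\bfg_0$ acts into $H^{s'}(\rz_+,\cz^{L_1})\oplus\cz^{M_1}$ whereas the trace-type factor is structured as $\begin{pmatrix}0&0\\\gamma_k&0\end{pmatrix}$ and picks out only the first component. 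The cleanest route is to note that $\boldsymbol{\gamma}_k\#\bfg_0$ equals the block matrix whose only nonzero entries come from $\gamma_k$ applied to the top row of $\bfg_0$, i.e.\ $\gamma_k$ composed with the Poisson/Green part of $\bfg_0$; since $\gamma_k$ is a trace symbol of order $k+\frac12$ and infinite regularity, Theorem~\ref{thm:comp_type0} (or rather its underlying Theorem~\ref{thm:comp02}, combined with the fact that composition with an $S^d$-symbol preserves the $\wt\calB$-classes as in Proposition~\ref{prop:incl}) gives $\boldsymbol{\gamma}_k\#\bfg_0\in\calB^{d_0+k+\frac12,\nu_0;0}_G((L_0,M_0),(L_1,M_1))$, because $\min\{\nu_0,+\infty,\nu_0+\infty\}=\nu_0$.

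Then I would compose on the left with $\bfg_1\in\calB^{d_1,\nu_1;0}_G((L_1,L_1),(L_2,M_2))$. Applying Theorem~\ref{thm:comp_type0} once more yields
$$
\bfg_1\#(\boldsymbol{\gamma}_k\#\bfg_0)\in\calB^{d_1+(d_0+k+\frac12),\,\min\{\nu_1,\nu_0,\nu_1+\nu_0\};0}_G((L_0,M_0),(L_2,M_2)),
$$
which is exactly the claimed class with $d=d_0+d_1+k+\frac12$ and $\nu=\min\{\nu_0,\nu_1,\nu_0+\nu_1\}$. Finally I invoke associativity of the Leibniz product, $(\bfg_1\boldsymbol{\gamma}_k)\#\bfg_0=\bfg_1\#(\boldsymbol{\gamma}_k\#\bfg_0)$, which holds because the Leibniz product corresponds to operator composition via \eqref{eq:Leibniz} and operator composition is associative (alternatively, one checks associativity directly on the oscillatory-integral level).

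I expect the main obstacle to be the bookkeeping around the block-matrix structure and the domain/range spaces in Definition~\ref{def:type}: one must verify that the intermediate symbol $\boldsymbol{\gamma}_k\#\bfg_0$ genuinely lands in $\calB^{\bullet;0}_G$ with the stated orders, which requires checking that applying $\gamma_k$ (valued in $\cz^{L_1}$) to the half-space-valued row of $\bfg_0$ produces something satisfying all four families of operator-valued estimates in Definition~\ref{def:btilde}, not just one. This amounts to combining Proposition~\ref{prop:trace}, Proposition~\ref{prop:incl}, and the ideal property of the $\wt\calB$-class stated after Theorem~\ref{thm:comp_type0}; none of these steps is deep, but the notation has to be handled with care, and as usual it suffices to treat the case $L_i=1$, $M_i=0$ and indicate that the general case is identical modulo heavier notation.
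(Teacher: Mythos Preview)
Your approach is essentially the paper's: rewrite $(\bfg_1\boldsymbol{\gamma}_k)\#\bfg_0=\bfg_1\#(\boldsymbol{\gamma}_k\bfg_0)$ using that $\boldsymbol{\gamma}_k$ is constant, then combine Proposition~\ref{prop:trace} with the Leibniz-product calculus for operator-valued symbols. One caution on the execution: $\boldsymbol{\gamma}_k$ is \emph{not} literally an element of any $\calB^{\,\cdot\,;0}_G$-class (it is only defined on $H^s$ with $s>k+\tfrac12$, not for all $s$), so Theorem~\ref{thm:comp_type0} does not apply to the pair $(\boldsymbol{\gamma}_k,\bfg_0)$ directly---your parenthetical ``or rather its underlying Theorem~\ref{thm:comp02}'' is exactly the right fix, and the paper carries this out by splitting $\bfg_0,\bfg_1$ into their strongly and weakly parameter-dependent parts (via Proposition~\ref{prop:incl}) and checking the four $\wt S$-estimates of Definition~\ref{def:btilde} with the appropriate $\kappa$/$\bfkappa$ index matching from Section~\ref{sec:green01}.
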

\begin{proof}
Let us first remark that the Leibniz product of $\boldsymbol{\gamma}_k$ with another operator-valued symbol 
$($acting in appropriate spaces$)$ coincides with the pointwise product of both, 
since $\boldsymbol{\gamma}_k$ is constant. Therefore, 
$(\bfg_1\boldsymbol{\gamma}_k)\#\bfg_0=\bfg_1\#\boldsymbol{\gamma}_k\#\bfg_0=
\bfg_1\#(\boldsymbol{\gamma}_k\bfg_0)$. 

It is enough to study the two cases where either both $\bfg_0$ and $\bfg_1$ are strongly parameter-dependent 
or both are weakly parameter-dependent; the mixed cases then follow from Proposition \ref{prop:incl}. 

For simplicity of notation let us now assume that $M_0=M_1=M_2=0$. 
Consider first the case of strongly 
parameter-dependent symbols. By Proposition \ref{prop:trace},
 $$\boldsymbol{\gamma}_k\bfg_0\in S^{d_0+k+\frac{1}{2}}(\rz^{n-1}\times\rpbar; 
     H^{s,\delta}_0(\rz_+,\cz^{L_0}),H^{s^\prime,\delta^\prime}(\rz_+,\cz^{L_1})\oplus \cz^{L_1})$$
for every $s,s^\prime,\delta,\delta^\prime\in\rz$. 
Recall that $\bfg_1$ induces a symbol 
 $$\bfg_1\in S^{d_1}(\rz^{n-1}\times\rpbar; 
     H^{s,\delta}(\rz_+,\cz^{L_1})\oplus\cz^{L_1},H^{s^\prime,\delta^\prime}(\rz_+,\cz^{L_2}))$$
for every $s>-1/2$ and all $s^\prime,\delta,\delta^\prime\in\rz$. Then the result follows simply by using the 
Leibniz product of strongly parameter-dependent operator-valued symbols, cf. Section \ref{sec:oscillatory} 
and Example \ref{ex:Leibniz}. 

In case of weak parameter-dependence we have to check that the Leibniz-product belongs to all four spaces 
listed in Definition \ref{def:btilde}. For example, due to Proposition \ref{prop:trace},  
 $$\boldsymbol{\gamma}_k\bfg_0\in \wt{S}^{d_0+k+\frac{1}{2},\nu_0}( 
     \calK^{s,0}(\rz_+,\cz^{L_0})^\delta,
     H^{s^\prime,\delta^\prime}(\rz_+,\cz^{L_1})\oplus\cz^{L_1})_{\kappa,\bfkappa}.$$
Moreover, cf. the paragraph before \eqref{eq:123}, 
 $$\bfg_1\in \wt{S}^{d_1,\nu_1}( H^{s,\delta}(\rz_+,\cz^{L_1})\oplus\cz^{L_1},
     \calK^{s^\prime,0}(\rz_+,\cz^{L_2})^{\delta^\prime})_{\bfkappa,\kappa}$$
for every $s>-1/2$ and all $s^\prime,\delta,\delta^\prime\in\rz$. By using the 
Leibniz product of weakly parameter-dependent operator-valued symbols as described in the end of 
Section \ref{sec:green01} it follows that 
 $$\bfg_1\#\boldsymbol{\partial}_+^k\bfg_0\in \wt{S}^{d_0+d_1+k,\nu_0+\nu_1}
     (\calK^{s,0}(\rz_+,\cz^{L_0})^\delta,
     \calK^{s^\prime,0}(\rz_+,\cz^{L_2})^{\delta^\prime})_{\kappa,\kappa}.$$
The argument for the other three spaces listed in Definition \ref{def:btilde} is analogous. 
\end{proof}

It is now clear that Theorem \ref{thm:comp_type0} on the Leibniz product of Green symbols 
with type $0$ extends to general types as follows:

\begin{theorem}\label{thm:comp_typer}
If $\bfg_j\in \calB^{d_j,\nu_j;r_j}_G((L_j,M_j),(L_{j+1},M_{j+1}))$, $j=0,1$, then 
 $$\bfg_1\#\bfg_0\in  \calB^{d,\nu;r}_G((L_0,M_0),(L_{2},M_{2}))$$
with 
 $$d=d_0+d_1,\quad \nu=\min\{\nu_0,\nu_1,\nu_0+\nu_1\},\quad r=r_0.$$
Moreover, for every $N\in\nz$, 
 $$\bfg_1\#\bfg_0-\sum_{|\alpha|=0}^{N-1}\frac{1}{\alpha!}
     \partial^\alpha_{\xi^\prime}\bfg_1 D^\alpha_{x^\prime}\bfg_0
     \in  \calB^{d-N,\nu-N;r}_G((L_0,M_0),(L_{2},M_{2})).$$
\end{theorem}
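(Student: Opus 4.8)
The plan is to reduce Theorem~\ref{thm:comp_typer} to the already-established type-$0$ case, Theorem~\ref{thm:comp_type0}, by systematically moving all boundary-derivative factors $\boldsymbol{\partial}_+^j$ (equivalently the trace factors $\boldsymbol{\gamma}_j$) to the \emph{right} of the composition using Lemma~\ref{lem:dg} and the fact that $\boldsymbol{\gamma}_k$ (and $\boldsymbol{\partial}_+$) are constant in $(x^\prime,\xi^\prime;\mu)$, so that their Leibniz product with any symbol is just the pointwise product. First I would write $\bfg_0=\bfg_{0,0}+\sum_{k=1}^{r_0}\bfg_{0,k}\boldsymbol{\partial}_+^k$ and $\bfg_1=\bfg_{1,0}+\sum_{\ell=1}^{r_1}\bfg_{1,\ell}\boldsymbol{\partial}_+^\ell$ with $\bfg_{j,i}$ of type $0$, order $d_j-i$, regularity $\nu_j$, in the respective space. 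Expanding the Leibniz product bilinearly, a typical summand is $(\bfg_{1,\ell}\boldsymbol{\partial}_+^\ell)\#(\bfg_{0,k}\boldsymbol{\partial}_+^k)$.

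The key step is that $\boldsymbol{\partial}_+^\ell$, being constant, satisfies $\boldsymbol{\partial}_+^\ell\#\bfg_{0,k}=\boldsymbol{\partial}_+^\ell\bfg_{0,k}$ pointwise, so $(\bfg_{1,\ell}\boldsymbol{\partial}_+^\ell)\#(\bfg_{0,k}\boldsymbol{\partial}_+^k)=\bfg_{1,\ell}\#(\boldsymbol{\partial}_+^\ell\bfg_{0,k})\#\boldsymbol{\partial}_+^k$. Now $\boldsymbol{\partial}_+^\ell\bfg_{0,k}$ is again a type-$0$ Green symbol of order $d_{0}-k+\ell$ and regularity $\nu_0$ (this is the reformulation of Lemma~\ref{lem:dg} in terms of $\boldsymbol{\partial}_+$ rather than $\boldsymbol{\gamma}_k$; one uses integration by parts / the relation between the two representations noted after Definition~\ref{def:type}, or one argues directly that left-multiplication by the constant symbol $\boldsymbol{\partial}_+$ maps $\calB^{d,\nu;0}_G$ into $\calB^{d+1,\nu;0}_G$ by Proposition~\ref{prop:trace} and the calculus of operator-valued symbols). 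By Theorem~\ref{thm:comp_type0}, $\bfg_{1,\ell}\#(\boldsymbol{\partial}_+^\ell\bfg_{0,k})\in\calB^{d_1-\ell+d_0-k+\ell,\nu_0+\nu_1;0}_G$ up to the usual $\min\{\nu_0,\nu_1,\nu_0+\nu_1\}$ on regularity when we do not track ideal membership; multiplying on the right by $\boldsymbol{\partial}_+^k$ gives a symbol in $\calB^{d_0+d_1,\nu;r'}_G$ with $r'\le k\le r_0$. Summing over $k,\ell$ and keeping the largest occurring $r'$ yields $\bfg_1\#\bfg_0\in\calB^{d,\nu;r}_G$ with $d=d_0+d_1$, $\nu=\min\{\nu_0,\nu_1,\nu_0+\nu_1\}$, and $r=r_0$, which is exactly the claim. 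The type is $r_0$ and not $r_1$ precisely because all $\boldsymbol{\partial}_+$-factors end up on the right, i.e.\ acting first, so only $\bfg_0$'s boundary order survives; the $\boldsymbol{\partial}_+^\ell$ coming from $\bfg_1$ is absorbed into the type-$0$ factor $\boldsymbol{\partial}_+^\ell\bfg_{0,k}$ (or, when $k=0$, one checks that $\bfg_{1,\ell}\#(\boldsymbol{\partial}_+^\ell\bfg_{0,0})$ is still type $0$ since $\boldsymbol{\partial}_+^\ell\bfg_{0,0}$ is a type-$0$ Green symbol by the mapping property of $\boldsymbol{\partial}_+$).

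For the asymptotic expansion statement, I would apply Theorem~\ref{thm:comp_type0}'s expansion \eqref{eq:comp_exp} to each pair $(\bfg_{1,\ell},\boldsymbol{\partial}_+^\ell\bfg_{0,k})$ separately and reassemble. Since $\partial_{\xi^\prime}^\alpha$ and $D_{x^\prime}^\alpha$ annihilate the constant symbols $\boldsymbol{\partial}_+$, one has $\partial^\alpha_{\xi^\prime}\bfg_1=\sum_\ell(\partial^\alpha_{\xi^\prime}\bfg_{1,\ell})\boldsymbol{\partial}_+^\ell$ and $D^\alpha_{x^\prime}\bfg_0=\sum_k(D^\alpha_{x^\prime}\bfg_{0,k})\boldsymbol{\partial}_+^k$, so the finite sum $\sum_{|\alpha|<N}\tfrac{1}{\alpha!}\partial^\alpha_{\xi^\prime}\bfg_1\,D^\alpha_{x^\prime}\bfg_0$ decomposes compatibly with the block/$\boldsymbol{\partial}_+$-structure; subtracting it from $\bfg_1\#\bfg_0$ term by term and invoking \eqref{eq:comp_exp} for type $0$ gives each remainder in $\calB^{d-N,\nu-N;0}_G$, hence after reattaching $\boldsymbol{\partial}_+^k$ the total remainder lies in $\calB^{d-N,\nu-N;r}_G$ with $r=r_0$, with continuous dependence inherited from the type-$0$ case.

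The main obstacle — more bookkeeping than genuine difficulty — is the careful reduction ``$\boldsymbol{\partial}_+^\ell\bfg_{0,k}$ is again a type-$0$ Green symbol of one higher order.'' The cleanest route is to note that $\boldsymbol{\partial}_+$ is a constant operator-valued symbol in $S^1(H^{s,\delta}(\rz_+,\cz^L)\oplus\cz^M,H^{s-1,\delta}(\rz_+,\cz^L)\oplus\cz^M)$ (as recorded before Lemma~\ref{lem:dg}) and, just as in the proof of Lemma~\ref{lem:dg} with $\boldsymbol{\gamma}_k$ replaced by $\boldsymbol{\partial}_+$, composing it on the left with a type-$0$ Green symbol preserves membership in all four operator-valued symbol spaces of Definition~\ref{def:btilde} — indeed $\partial_+$ acts continuously on each of the relevant scales $H^{s,\delta}$, $\calK^{s,0,\delta}$, $H^s_0$, $\calK^{s,0}$ between the appropriate indices, and it is $\kappa$-homogeneous of degree $1$, so the conjugated estimates only shift order by $1$ and leave regularity unchanged. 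Once this lemma is in place, everything else is the bilinear expansion plus Theorem~\ref{thm:comp_type0}.
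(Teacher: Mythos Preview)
Your overall strategy---reduce to the type-$0$ composition result and push all boundary factors to the right---is exactly the paper's, but the paper executes it via the $\boldsymbol{\gamma}_j$-representation of Definition~\ref{def:type} together with Lemma~\ref{lem:dg}, not via the $\boldsymbol{\partial}_+$-representation. This difference is not cosmetic: the paper explicitly remarks after Definition~\ref{def:type} that it does \emph{not} prove the direction ``every $\sum_j\bfg_j\boldsymbol{\partial}_+^j$ with $\bfg_j$ of type~$0$ belongs to the class of Definition~\ref{def:type}''. Since your argument ends by exhibiting $\bfg_1\#\bfg_0$ in precisely that form, concluding membership in $\calB^{d,\nu;r_0}_G$ would require the unproved direction.

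More concretely, your ``cleanest route'' for the key step---that $\boldsymbol{\partial}_+^\ell\bfg_{0,k}$ is again of type~$0$---has a gap. The operator $\partial_+$ is \emph{not} bounded between the spaces $\calK^{s,0}(\rz_+)^\delta$ appearing in Definition~\ref{def:btilde}: near $t=0$ one has $\partial_t=t^{-1}(t\partial_t)$, so $\partial_+$ shifts the weight $\gamma$ and maps $\calK^{s,0}(\rz_+)^\delta$ only into $\calK^{s-1,-1}(\rz_+)^\delta$. (This is why the paper records $\boldsymbol{\partial}_+$ solely as a symbol between $H^{s,\delta}$-scales.) Consequently the $(\bfkappa,\kappa)$- and $(\kappa,\kappa)$-estimates of Definition~\ref{def:btilde} for $\boldsymbol{\partial}_+^\ell\bfg_{0,k}$ do not follow from your argument, and Theorem~\ref{thm:comp_type0} cannot be invoked for the pair $\big(\bfg_{1,\ell},\boldsymbol{\partial}_+^\ell\bfg_{0,k}\big)$ without further work.

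The paper's route sidesteps both issues. Write $\bfg_1=\bfg_{1,0}+\sum_k A_k\boldsymbol{\gamma}_k$ and $\bfg_0=\bfg_{0,0}+\sum_j A'_j\boldsymbol{\gamma}_j$ as in Definition~\ref{def:type}. Since each $\boldsymbol{\gamma}_j$ is constant, $\bfc\#(A'_j\boldsymbol{\gamma}_j)=(\bfc\#A'_j)\boldsymbol{\gamma}_j$ for any symbol $\bfc$, so the $\boldsymbol{\gamma}_j$ coming from $\bfg_0$ remain on the right and already give the type $r_0$ of the result \emph{in the sense of Definition~\ref{def:type}}. The factors $\boldsymbol{\gamma}_k$ from $\bfg_1$ are absorbed by Lemma~\ref{lem:dg}, whose proof never needs $\partial_+$ on $\calK$-spaces because $\gamma_k$ lands in $\cz^{L_1}$. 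The asymptotic expansion then follows termwise from Theorem~\ref{thm:comp_type0} and Lemma~\ref{lem:dg}, exactly along the lines of your last paragraph.
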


\section{Action in Sobolev spaces}\label{sec:Sobolev}

In this section we discuss the mapping properties of parameter-dependent singular Green operators in 
Sobolev spaces. 

\subsection{The abstract framework}\label{sec:sobolev01}

Given a Hilbert space $E$ with group-action $\kappa$, let 
$\calW^s(\rz^n,E)$, $s\in\rz$, denote the space of all distributions 
$u\in\scrS^\prime(\rz^n,E)$ such 
that its Fourier transform $\wh{u}$ is a regular distribution and
  $$\|u\|_{\calW^s(\rz^n,E)}=\Big(\int\spk{\xi}^{2s}\|\kappa^{-1}(\xi)
  \hat{u}(\xi)\|_{E}^2\,d\xi\Big)^{1/2}<+\infty.$$
(note that the integral is defined, since together with $\wh{u}$ also 
$\kappa^{-1}(\cdot)\wh{u}$ is measurable due to the strong continuity the group-action$)$. 
These spaces are called abstract wedge Sobolev spaces and have been introduced by  
Schulze in \cite{Schu4}; cf. also \cite{Hirs1}. 

Now consider two Hilbert spaces $E_0$ and $E_1$ with respective group-actions $\kappa_0$ and $\kappa_1$. 
Let $S^d(\rz^n;E_0,E_1)$ be the space of all symbols $a(x,\xi)$  satisfying 
 $$\|\kappa_1^{-1}(\xi)\{D^\alpha_\xi D^\beta_x a(x,\xi)\}\kappa_0(\xi)\|_{\scrL(E_0,E_1)}
     \preceq\spk{\xi}^{d-|\alpha|}.$$
The following theorem follows easily from \cite[Theorem 3.14]{Seil99} together 
with the calculus for such operator-valued symbols. 

\begin{theorem}
Every $a\in S^d(\rz^n;E_0,E_1)$ induces continuous operators 
 $$\op(a):\calW^s(\rz^n,E_0)\lra \calW^{s-d}(\rz^n,E_1),\qquad s\in\rz.$$
\end{theorem}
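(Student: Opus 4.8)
The plan is to reduce the assertion to the special case $s=d=0$ by means of order-reducing Fourier multipliers, and then to quote \cite[Theorem 3.14]{Seil99} for that case. The only ingredients needed are the definition of the norm of $\calW^s(\rz^n,E)$ and the composition theorem for operator-valued symbols (the non-parametric analogue of Theorem~\ref{thm:comp02}, with the relation $\op(a_1\#a_0)=\op(a_1)\op(a_0)$ as in \eqref{eq:Leibniz}).

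First I would record the following elementary facts. For any Hilbert space $E$ with group action $\kappa$ and any $t\in\rz$, the symbol $p^t_E(\xi):=\spk{\xi}^t\,\mathrm{id}_E$ lies in $S^t(\rz^n;E,E)$: one has $\kappa^{-1}(\xi)p^t_E(\xi)\kappa(\xi)=\spk{\xi}^t\,\mathrm{id}_E$ since $\kappa^{-1}(\xi)\kappa(\xi)=\mathrm{id}$, and $\xi$-differentiation only produces scalar factors $D^\alpha_\xi\spk{\xi}^t\preceq\spk{\xi}^{t-|\alpha|}$. Moreover $\op(p^t_E)$ is simply the Fourier multiplier $\wh u\mapsto\spk{\xi}^t\wh u$, and a one-line computation with the defining norm gives
$$\|\op(p^t_E)u\|_{\calW^{s-t}(\rz^n,E)}=\|u\|_{\calW^{s}(\rz^n,E)},\qquad s\in\rz,$$
so that $\op(p^t_E):\calW^{s}(\rz^n,E)\to\calW^{s-t}(\rz^n,E)$ is an isometric isomorphism with inverse $\op(p^{-t}_E)$; this uses no boundedness theorem. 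Now, given $a\in S^d(\rz^n;E_0,E_1)$ and $s\in\rz$, the composition theorem produces $c:=p^{s-d}_{E_1}\#\,a\,\#\,p^{-s}_{E_0}\in S^{0}(\rz^n;E_0,E_1)$ (orders $(s-d)+d+(-s)=0$) together with $\op(c)=\op(p^{s-d}_{E_1})\,\op(a)\,\op(p^{-s}_{E_0})$, hence
$$\op(a)=\op(p^{d-s}_{E_1})\,\op(c)\,\op(p^{s}_{E_0}).$$
Since the outer factors are isomorphisms $\calW^{s}(\rz^n,E_0)\to\calW^{0}(\rz^n,E_0)$ and $\calW^{0}(\rz^n,E_1)\to\calW^{s-d}(\rz^n,E_1)$, it suffices to prove that $\op(c):\calW^{0}(\rz^n,E_0)\to\calW^{0}(\rz^n,E_1)$ is continuous for every $c\in S^0(\rz^n;E_0,E_1)$; continuity of $\op(a)$ on $\scrS(\rz^n,E_0)$, and its unique extension to $\calW^s$, follow by density.

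For $x$-independent $c$ this last step is immediate: $\op(c)$ is a Fourier multiplier and, by the $\alpha=\beta=0$ symbol estimate,
$$\|\op(c)u\|_{\calW^0(\rz^n,E_1)}^2=\int\|\kappa_1^{-1}(\xi)c(\xi)\kappa_0(\xi)\,\kappa_0^{-1}(\xi)\wh u(\xi)\|_{E_1}^2\,d\xi\le\Big(\sup_\xi\|\kappa_1^{-1}(\xi)c(\xi)\kappa_0(\xi)\|\Big)^2\|u\|_{\calW^0(\rz^n,E_0)}^2.$$
The general, $x$-dependent case is precisely the content of \cite[Theorem 3.14]{Seil99}: it is the operator-valued counterpart of the Calder\'on--Vaillancourt theorem, proved by the usual $L^2$-boundedness machinery (a Cotlar--Stein type almost-orthogonality argument on dyadic pieces in $\xi$), with the $\kappa$-conjugated operator norms $\|\kappa_1^{-1}(\xi)\{\cdots\}\kappa_0(\xi)\|$ playing the role of the scalar coefficient bounds. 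I expect this $x$-dependent $\calW^0\to\calW^0$ estimate to be the only genuine obstacle; everything else — the choice of order reductions and the bookkeeping of orders through the symbolic calculus — is routine, which is why the theorem "follows easily" once \cite{Seil99} is available.
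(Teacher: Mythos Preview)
Your proposal is correct and follows essentially the same approach as the paper: the paper states that the theorem ``follows easily from \cite[Theorem 3.14]{Seil99} together with the calculus for such operator-valued symbols,'' and your order-reduction via the multipliers $\spk{\xi}^t\,\mathrm{id}_E$ together with the composition formula is exactly this calculus argument (the same reduction is spelled out explicitly in the proof of the parametric Theorem~\ref{thm:mapping}).
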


Now let us introduce spaces with parameter-dependent norms. 

\begin{definition}
We denote by $\calW^{(s,t),\mu}(\rz^n,E)$ and $\calW^{(s,t),\bfmu}(\rz^n,E)$  
the Sobolev space $\calW^{s+t}(\rz^n,E)$ equipped with the norms 
 $$\|u\|_{\calW^{(s,t),\mu}(\rz^n,E)}=\Big(\int\spk{\xi,\mu}^{2s}\spk{\xi}^{2t}\|\kappa^{-1}(\xi)
     \hat{u}(\xi)\|_{E}^2\,d\xi\Big)^{1/2}$$
and
 $$\|u\|_{\calW^{(s,t),\bfmu}(\rz^n,E)}=\Big(\int\spk{\xi,\mu}^{2s}\spk{\xi}^{2t}\|\kappa^{-1}(\xi,\mu)
  \hat{u}(\xi)\|_{E}^2\,d\xi\Big)^{1/2},$$
respectively. 
\end{definition} 

Assume now that on a Hilbert space $E$ there exists a family of norms $\|\cdot\|_{\mu}$, 
parametrized by a parameter $\mu$, that are all equivalent 
to a fixed norm on $E$; denote by $E^\mu$ the space $E$ equipped with the norm  $\|\cdot\|_{\mu}$. Moreover, 
suppose we have a family $A(\mu)\in\scrL(E)$. We shall write $A(\mu)\in\scrL(E^\mu)$ if 
 $$\|A(\mu)\|_{\scrL(E^\mu)}=\sup_{e\in E}\frac{\|A(\mu)e\|_{\mu}}{\|e\|_\mu}\preceq 1.$$
This concepts extends in the obvious way to two different spaces $E_0$ and $E_1$ equipped with $\mu$-dependent 
norms. 

\begin{theorem}\label{thm:mapping}
Let $s,t\in\rz$. The following statements are valid$:$
\begin{itemize}
 \item[a$)$] $a\in \wt{S}^{d,\nu}_{1,0}(\rz^n\times\rpbar;E_0,E_1)_{\bfkappa,\bfkappa}$ implies 
  $$\op(a)(\mu)\in\scrL\big(\calW^{(s,t),\bfmu}(\rz^n,E_0),
    \calW^{(s+\nu-d,t-\nu),\bfmu}(\rz^n,E_1)\big).$$  
 \item[b$)$] $a\in \wt{S}^{d,\nu}_{1,0}(\rz^n\times\rpbar;E_0,E_1)_{\kappa,\bfkappa}$ implies 
  $$\op(a)(\mu)\in\scrL\big(\calW^{(s,t),\mu}(\rz^n,E_0),
     \calW^{(s+\nu-d,t-\nu),\bfmu}(\rz^n,E_1)\big).$$  
 \item[c$)$] $a\in \wt{S}^{d,\nu}_{1,0}(\rz^n\times\rpbar;E_0,E_1)_{\bfkappa,\kappa}$ implies 
  $$\op(a)(\mu)\in\scrL\big(\calW^{(s,t),\bfmu}(\rz^n,E_0),
     \calW^{(s+\nu-d,t-\nu),\mu}(\rz^n,E_1)\big).$$  
 \item[d$)$] $a\in \wt{S}^{d,\nu}_{1,0}(\rz^n\times\rpbar;E_0,E_1)_{\kappa,\kappa}$ implies 
  $$\op(a)(\mu)\in\scrL\big(\calW^{(s,t),\mu}(\rz^n,E_0),
    \calW^{(s+\nu-d,t-\nu),\mu}(\rz^n,E_1)\big).$$  
 \item[e$)$] $a\in {S}^{d}_{1,0}(\rz^n\times\rpbar;E_0,E_1)$ implies 
  $$\op(a)(\mu)\in\scrL\big(\calW^{(s,t),\bfmu}(\rz^n,E_0),\calW^{(s-d,t),\bfmu}(\rz^n,E_1)\big).$$
\end{itemize}
\end{theorem}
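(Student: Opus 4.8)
The plan is to reduce all five statements to the parameter-free mapping theorem \cite[Theorem 3.14]{Seil99} cited just above, by combining two reduction devices: $\mu$-dependent order reductions and the parameter-dilation. First I would set up the order reductions. For $\sigma,\tau\in\rz$ and a Hilbert space $E$ with group action $\kappa$, consider the Fourier multiplier $\Lambda^{\sigma,\tau}(\mu):=\op\big(\spk{\xi,\mu}^{\sigma}\spk{\xi}^{\tau}\,\mathrm{id}_E\big)(\mu)$. Since the symbol is scalar it commutes with every $\kappa_\lambda$, so one reads off directly from the definitions (and \eqref{eq:M-kappa}) that $\spk{\xi,\mu}^{\sigma}\spk{\xi}^{\tau}\,\mathrm{id}_E$ lies in each of the weakly parameter-dependent classes $\wt S^{\sigma+\tau,\tau}_{1,0}(E,E)$ for any choice of the two $\kappa/\bfkappa$-subscripts, that $\Lambda^{\sigma,\tau}(\mu)$ is an isometric isomorphism $\calW^{(a,b),\mu}(\rz^n,E)\to\calW^{(a-\sigma,b-\tau),\mu}(\rz^n,E)$, and similarly an isometric isomorphism of the $\bfmu$-scale. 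Sandwiching $\op(a)(\mu)$ between suitable such multipliers and using the composition calculus (Theorem~\ref{thm:comp02} and its analogues from Section~\ref{sec:green01}) one is reduced, in each case, to a symbol whose order and regularity are normalized so that the target space becomes a plain $\calW^{\,\cdot\,}$-space; in particular (e) is the case $\nu=0$ of (a), via $S^d_{1,0}\subseteq\wt S^{d,0}_{1,0}(E_0,E_1)_{\bfkappa,\bfkappa}$.

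Next I would introduce the parameter-dilation $\delta_\mu u(x):=\spk{\mu}^{n/2}u(\spk{\mu}x)$, which is $L^2$-unitary, and exploit the homogeneity identity $\spk{\spk{\mu}\zeta,\mu}=\spk{\mu}\spk{\zeta}$. This shows that $\delta_\mu$ maps $\calW^{(s,t),\bfmu}(\rz^n,E)$ (and, by the same computation, $\calW^{(s,t),\mu}(\rz^n,E)$) isometrically, up to a power of $\spk{\mu}$, onto the standard $\calW^{\,\cdot\,}$-space built over the rescaled Hilbert space $E^{(\mu)}$, where $E^{(\mu)}$ is $E$ with the norm $\|\kappa_{1/\spk{\mu}}\,\cdot\,\|_E$; since $\kappa_{1/\spk{\mu}}$ intertwines $\kappa$ with itself, $E^{(\mu)}$ is isometrically isomorphic to $E$ and the constant $M$ in \eqref{eq:M-kappa} is unchanged. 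On the symbol side, conjugation by $\delta_\mu$ turns $\op(a)(\mu)$ into the operator with left symbol $a(\spk{\mu}^{-1}x,\spk{\mu}\xi;\mu)$. Substituting $\eta=\spk{\mu}\xi$ into the defining estimate of the relevant $\wt S$-class (respectively of $S^d_{1,0}$), using $\spk{\spk{\mu}\xi,\mu}=\spk{\mu}\spk{\xi}$ once more, and invoking \eqref{eq:M-kappa} to pass between $\kappa^{\pm1}(\spk{\mu}\xi)$ and $\kappa^{\pm1}_{1/\spk{\mu}}\kappa^{\pm1}(\xi)$, one should find that the transformed symbol lies in the parameter-free class $S^{d'}(\rz^n;E_0^{(\mu)},E_1^{(\mu)})$ with seminorms bounded uniformly in $\mu$. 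Applying \cite[Theorem 3.14]{Seil99} to this uniform family and then undoing the dilation and the order reductions gives (a)--(e); for the mixed cases (b) and (c), where the group action occurs at scale $\spk{\xi}$ on one side and $\spk{\xi,\mu}$ on the other, the dilation is applied only on the ``bold'' side and the ``plain'' side is controlled directly.

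The hard part will not be the formal manipulations but the verification that, after all these transformations, the parameter-free symbol seminorms are genuinely \emph{uniform} in $\mu$: this is what forces the precise matching of the two orders $(d,\nu)$ against the two weight exponents $(s,t)$ and the two $\kappa/\bfkappa$-subscripts, and it is exactly here that the quantitative bound \eqref{eq:M-kappa} and the homogeneity of $\spk{\cdot,\cdot}$ are essential. A secondary but structurally important constraint, which shapes the whole argument, is that $\kappa$ is only assumed strongly continuous and hence is not differentiable; this is why all order-reduction operators must be built from \emph{scalar} symbols (which commute with $\kappa$) rather than from the ``obvious'' multipliers that would rescale the group action, and why in the mixed cases the scale mismatch between input and output has to be carried through the dilation step rather than removed by conjugation.
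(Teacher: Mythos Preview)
Your first reduction device---the scalar order-reductions $\Lambda^{\sigma,\tau}(\mu)$---is exactly what the paper does, and it is correct. The problem is the second device. The parameter-dilation $\delta_\mu$ does work for case~(e): after reducing to $d=0$, the strong estimate $\spk{\xi,\mu}^{-|\alpha|}$ becomes $(\spk{\mu}\spk{\zeta})^{-|\alpha|}$ under $\xi=\spk{\mu}\zeta$, and the Jacobian factor $\spk{\mu}^{|\alpha|}$ from $\zeta$-differentiation cancels cleanly, leaving $\spk{\zeta}^{-|\alpha|}$. But for the weakly parameter-dependent classes in (a)--(c) this cancellation fails. After reducing (a) to order and regularity zero, the defining estimate is
\[
\|\kappa_1^{-1}(\xi,\mu)\,D^\alpha_\xi D^\beta_x \wt a\,\kappa_0(\xi,\mu)\|\preceq \spk{\xi}^{-|\alpha|},
\]
and the conjugated symbol $b(y,\zeta;\mu)=\wt a(y/\spk{\mu},\spk{\mu}\zeta;\mu)$ satisfies, in the $\scrL(E_0^{(\mu)},E_1^{(\mu)})$-norm you introduce,
\[
\|\kappa_1^{-1}(\zeta)\,D^\alpha_\zeta D^\beta_y b\,\kappa_0(\zeta)\|
\preceq \spk{\mu}^{|\alpha|-|\beta|}\spk{\spk{\mu}\zeta}^{-|\alpha|}.
\]
At $\zeta=0$ this is $\spk{\mu}^{|\alpha|-|\beta|}$, which is unbounded in $\mu$ already for $|\alpha|=1$, $\beta=0$. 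So $b$ does \emph{not} lie in any $S^{d'}(\rz^n;E_0^{(\mu)},E_1^{(\mu)})$ with $\mu$-uniform seminorms, and \cite[Theorem~3.14]{Seil99} cannot be applied. The appeal to \eqref{eq:M-kappa} ``to pass between $\kappa^{\pm1}(\spk{\mu}\xi)$ and $\kappa^{\pm1}_{1/\spk{\mu}}\kappa^{\pm1}(\xi)$'' does not help: $\spk{\spk{\mu}\xi}$ and $\spk{\mu}\spk{\xi}$ differ by a factor comparable to $\spk{\mu}$ near $\xi=0$, so that passage costs exactly the power of $\spk{\mu}$ you are trying to avoid. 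The ``one-sided dilation'' proposed for (b) and (c) is not a well-defined operation on $\op(a)(\mu)$ and inherits the same obstruction.

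The paper sidesteps this entirely: after the same scalar order-reduction it does \emph{not} attempt to reduce to the parameter-free theorem, but instead reruns the proof of \cite[Theorem~3.14]{Seil99} with $\kappa_j(\xi)$ replaced by $\kappa_j(\xi,\mu)$ wherever the $\bfkappa$-subscript demands it. Since that proof only uses the group-action through the twisted seminorms, and since those seminorms are by hypothesis uniformly bounded in $\mu$ after reduction to order and regularity zero, the argument goes through verbatim for each of the four combinations of subscripts. Case~(d) in fact needs no modification at all (both sides are the parameter-free $\calW^0$-space), which is consistent with your observation; the point is that (a)--(c) and (e) require working \emph{inside} the boundedness proof rather than outside it.
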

\begin{proof}
a)--d) Let $\lambda^{d,\nu}_j(\xi;\mu)=\spk{\xi}^\nu\spk{\xi,\mu}^{d-\nu}\mathrm{id}_{E_j}$ and 
$\wt{a}=\lambda_1^{s-d+t,t-\nu}\#a\#\lambda_0^{-s-t,-t}$.
Then $\wt{a}$ is of order and regularity $0$ and the statements are equivalent to the continuity of $\op(\wt{a})(\mu)$ between the corresponding parameter-dependent Sobolev spaces of order $(0,0)$. 
However, this can be shown following the proof of \cite[Theorem 3.14]{Seil99}, making only 
simple modifications that regard the use of the semi-group function 
$\kappa_j(\xi,\mu)$ in place of $\kappa_j(\xi)$. 

e) is verified similarly. 
\end{proof}

There are various estimates relating different parameter-dependent norms. For purposes below we mention that 
\begin{equation}\label{eq:param-estimate}
 \|u\|_{\calW^{(s,t),\bfmu}(\rz^n,E)}\le 
 \spk{\mu}^{r_+} \|u\|_{\calW^{(s-r,t+r),\bfmu}(\rz^n,E)},\qquad u\in \calW^{s+t}(\rz^n,E), 
\end{equation}
for every $r,s,t,\in\rz$. This simply follows from the fact that 
 $$\spk{\xi,\mu}^s\spk{\xi}^t=\spk{\xi,\mu}^{s-r}\spk{\xi,\mu}^r\spk{\xi}^t\le     
   \spk{\xi,\mu}^{s-r}\spk{\xi}^{t+r}\spk{\mu}^{r_+}.$$ 

\subsection{Example: Sobolev spaces on the half-space}\label{sec:sobolev02}

We shall show how the previously described concept of edge Sobolev spaces allows to recover 
certain $($anisotropic$)$ Sobolev spaces with parameter-dependent norm. 
Let us first look at the full Euclidean space. Define 
\begin{align*}
 H^{(s,t)}(\rz^{n})=\{u\in\scrS^\prime(\rz^n) \mid \spk{\xi}^s\spk{\xi^\prime}^t\wh{u}(\xi)\in L^2(\rz^n)\}
\end{align*}
with obvious definition of the norm. The parameter-dependent version, denoted by 
$H^{(s,t),\mu}(\rz^{n})$, carries the norm(s) 
 $$\|u\|_{H^{(s,t),\mu}(\rz^{n})}=\|\spk{\xi,\mu}^s\spk{\xi^\prime,\mu}^t\wh{u}(\xi)\|_{L^2(\rz^n)}.$$
 
\begin{proposition}
With equality of norms, $H^{(s,t)}(\rz^{n})=\calW^{s+t}(\rz^{n-1},H^s(\rz))$ and 
 $$H^{(s,t),\mu}(\rz^{n})=\calW^{(s+t,0),\bfmu}(\rz^{n-1},H^s(\rz)).$$
\end{proposition}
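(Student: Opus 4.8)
The plan is to unwind both sides of the claimed identities to the level of Fourier transforms and verify equality of the defining norms directly, the point being that for the group action on $H^s(\rz)$ from Example \ref{ex:standard-group-action} the Plancherel theorem turns the $\calW$-norm into an ordinary weighted $L^2$-norm on $\rz^n$.

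First I would recall that for $E=H^s(\rz)$ with the standard group action $\kappa_\lambda$, an element $u\in\scrS^\prime(\rz^n)$ may be viewed as an $E$-valued distribution $x^\prime\mapsto u(x^\prime,\cdot)$ on $\rz^{n-1}$, and its $(n-1)$-dimensional Fourier transform $\wh{u}(\xi^\prime)$ is an $E$-valued function of $\xi^\prime$; concretely $\wh{u}(\xi^\prime)(\cdot)$ is the partial Fourier transform of $u$ in the first $n-1$ variables. By definition of $\calW^{s+t}(\rz^{n-1},H^s(\rz))$,
\begin{equation*}
 \|u\|_{\calW^{s+t}(\rz^{n-1},H^s(\rz))}^2
   =\int_{\rz^{n-1}}\spk{\xi^\prime}^{2(s+t)}\|\kappa^{-1}(\xi^\prime)\wh u(\xi^\prime)\|_{H^s(\rz)}^2\,d\xi^\prime.
\end{equation*}
The key computation is that $\kappa^{-1}(\xi^\prime)=\kappa_{1/\spk{\xi^\prime}}$ acts on $H^s(\rz)$ by $(\kappa_{1/\spk{\xi^\prime}}\phi)(x_n)=\spk{\xi^\prime}^{-1/2}\phi(x_n/\spk{\xi^\prime})$, so on the $x_n$-Fourier side it multiplies $\wh\phi(\xi_n)$ by $\spk{\xi^\prime}^{1/2}$ and rescales $\xi_n\mapsto\spk{\xi^\prime}\xi_n$; hence, writing $\wh{\wh u}(\xi^\prime,\xi_n)$ for the full Fourier transform of $u$ and substituting $\xi_n\mapsto\xi_n/\spk{\xi^\prime}$,
\begin{equation*}
 \|\kappa^{-1}(\xi^\prime)\wh u(\xi^\prime)\|_{H^s(\rz)}^2
   =\int_{\rz}\spk{\xi_n}^{2s}\,\spk{\xi^\prime}\,|\wh{\wh u}(\xi^\prime,\spk{\xi^\prime}\xi_n)|^2\,d\xi_n
   =\int_{\rz}\Big\spk{\frac{\xi_n}{\spk{\xi^\prime}}}^{2s}|\wh{\wh u}(\xi^\prime,\xi_n)|^2\,d\xi_n.
\end{equation*}
Since $\spk{\xi^\prime}^2\big\spk{\xi_n/\spk{\xi^\prime}}^2=\spk{\xi^\prime}^2+\xi_n^2=\spk{\xi}^2$, the factor $\spk{\xi^\prime}^{2s}\big\spk{\xi_n/\spk{\xi^\prime}}^{2s}$ equals $\spk{\xi}^{2s}$, so multiplying by the outer weight $\spk{\xi^\prime}^{2(s+t)}$ gives exactly $\spk{\xi}^{2s}\spk{\xi^\prime}^{2t}$. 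Integrating in $\xi^\prime$ and using Fubini yields $\|u\|_{\calW^{s+t}(\rz^{n-1},H^s(\rz))}=\|u\|_{H^{(s,t)}(\rz^n)}$, with equality of norms. This simultaneously identifies the underlying space (both consist of those $u$ for which the common integral is finite).

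For the parameter-dependent version I would repeat the same substitution but now track the weight $\spk{\xi^\prime,\mu}$ entering $\kappa^{-1}(\xi^\prime,\mu)=\kappa_{1/\spk{\xi^\prime,\mu}}$ in the definition of $\calW^{(s+t,0),\bfmu}$. The same scaling argument, using $\spk{\xi^\prime,\mu}^2\big\spk{\xi_n/\spk{\xi^\prime,\mu}}^2=\spk{\xi^\prime,\mu}^2+\xi_n^2=\spk{\xi,\mu}^2$, converts $\spk{\xi^\prime,\mu}^{2s}\|\kappa^{-1}(\xi^\prime,\mu)\wh u(\xi^\prime)\|_{H^s(\rz)}^2$ (with the outer $\spk{\xi^\prime}^{2t}$ of the $(s+t,0)$-norm) into $\spk{\xi,\mu}^{2s}\spk{\xi^\prime,\mu}^{0}\cdot\spk{\xi^\prime}^{2t}$ — wait, here one must be careful: the anisotropic norm on $H^{(s,t),\mu}$ carries $\spk{\xi^\prime,\mu}^t$, not $\spk{\xi^\prime}^t$. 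This is precisely why the second statement uses $\calW^{(s+t,0),\bfmu}$ (total order $s+t$, $t$-part zero) rather than $\calW^{(s,t),\bfmu}$: in the $\bfmu$-norm the outer weight is $\spk{\xi,\mu}^{s}\spk{\xi}^{t}$ with the convention $\calW^{(a,b),\bfmu}$, so taking $(a,b)=(s+t,0)$ gives outer weight $\spk{\xi,\mu}^{s+t}$, and after the scaling one factor $\spk{\xi,\mu}^s$ recombines with the $x_n$-weight to $\spk{\xi,\mu}^{2s}$-type terms while the leftover $\spk{\xi,\mu}^{2t}$ stays, producing exactly $\|\spk{\xi,\mu}^s\spk{\xi^\prime,\mu}^t\wh{\wh u}\|^2_{L^2}$. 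I would present this bookkeeping carefully: set $w(\xi^\prime,\mu)=\spk{\xi^\prime,\mu}$, compute $\|\kappa_{1/w}^{-1}\wh u(\xi^\prime)\|_{H^s(\rz)}^2=\int\spk{\xi_n/w}^{2s}|\wh{\wh u}(\xi^\prime,\xi_n)|^2 d\xi_n$, note $w^{2s}\spk{\xi_n/w}^{2s}=\spk{\xi,\mu}^{2s}$, and conclude.

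The main obstacle, and the only genuinely delicate point, is pinning down the normalization conventions so the half-integer $\tfrac12$ from the unitary $\kappa$ cancels correctly and the split of the exponent $s+t$ into "$\spk{\xi,\mu}$-part" and "$\spk{\xi^\prime}$-part" matches the definition of $\calW^{(\cdot,\cdot),\bfmu}$ given earlier; once the substitution $\xi_n\mapsto w\,\xi_n$ is set up and Plancherel in $x_n$ is invoked, everything is a one-line weight identity. A minor technical remark I would include is measurability/Fubini: $\xi^\prime\mapsto\kappa^{-1}(\xi^\prime,\mu)\wh u(\xi^\prime)$ is measurable into $H^s(\rz)$ by strong continuity of the group action (as already noted in the text when the $\calW$-spaces were defined), so the iterated integral legitimately collapses to the integral over $\rz^n$ of $\spk{\xi,\mu}^{2s}\spk{\xi^\prime,\mu}^{2t}|\wh{\wh u}(\xi)|^2$, giving equality of norms and hence of spaces.
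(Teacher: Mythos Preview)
Your approach is correct and is essentially the same as the paper's: both arguments reduce to the substitution $\xi_n=\spk{\xi^\prime,\mu}\tau_n$ together with the weight identity $\spk{\xi^\prime,\mu}\spk{\tau_n}=\spk{\xi,\mu}$, the only cosmetic difference being that the paper starts from the $H^{(s,t),\mu}$-norm and arrives at the $\calW$-norm while you go the other way. You should, however, clean up the stream-of-consciousness ``--- wait, here one must be careful'' passage and the slip $\kappa_{1/w}^{-1}$ (you mean $\kappa_{1/w}=\kappa^{-1}(\xi^\prime,\mu)$); once the bookkeeping is written out straightforwardly the argument is a two-line computation.
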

\begin{proof}
By definition, 
 $$\|u\|_{H^{(s,t),\mu}(\rz^{n})}^2
   =\iint\spk{\xi^\prime,\xi_n,\mu}^{2s}\spk{\xi^\prime,\mu}^{2t}|\wh{u}(\xi^\prime,\xi_n)|^2
    \,d\xi_n d\xi^\prime.$$
By the change of variables $\xi_n=\spk{\xi^\prime,\mu}\tau_n$ and noting that 
$\spk{\xi^\prime,\spk{\xi^\prime,\mu}\tau_n,\mu}=\spk{\xi^\prime,\mu}\spk{\tau_n}$ we find 
 $$\|u\|_{H^{(s,t),\mu}(\rz^{n})}^2
   =\int\spk{\xi^\prime,\mu}^{2(s+t)}\Big(\int\spk{\tau_n}^{2s}|\spk{\xi^\prime,\mu}^{1/2}
    \wh{u}(\xi^\prime,\spk{\xi^\prime,\mu}\tau_n)|^2\,d\xi_n\Big) d\xi^\prime.$$
Noting that Fourier transform and group-action satisfy $\kappa_\lambda\scrF=\scrF\kappa_\lambda^{-1}$, we find 
 $$\|u\|_{H^{(s,t),\mu}(\rz^{n})}^2
   =\int\spk{\xi^\prime,\mu}^{2(s+t)}\|\kappa^{-1}(\xi^\prime,\mu)\scrF_{x^\prime\to\xi^\prime}u(\xi^\prime)\|_{H^s(\rz)}^2\,d\xi^\prime.$$
This is exactly what has been claimed. 
\end{proof}

Similarly, for the half-space $\rz^n_+=\rz^{n-1}\times\rz_+$, we have 
\begin{align}
 H^{(s,t),\mu}_0(\overline{\rz}^{n}_+)
 &=\calW^{(s+t,0),\bfmu}(\rz^{n-1},H^s_0(\rpbar)),\\
 H^{(s,t),\mu}(\rz^{n}_+)
 &=\calW^{(s+t,0),\bfmu}(\rz^{n-1},H^s(\rz_+)),\label{eq:h-w1}
\end{align}
where the first space consists of all elements of $H^{(s,t),\mu}(\rz^{n})$ having support 
contained in $\overline{\rz}^{n}_+$, while the second denotes the space of restrictions to ${\rz}^{n}_+$, cf. the discussion in Section \ref{sec:app01.1}. 

\begin{remark}
Recall that $\cz$ is equipped with the trivial group action $\kappa\equiv 1$. It is then 
rather obvious that 
\begin{equation}\label{eq:h-w2}
 \calW^{(s,0),\bfmu}(\rz^{n},\cz)=\calW^{(s,0),\mu}(\rz^{n},\cz)=H^{(s,0),\mu}(\rz^n).
\end{equation}
\end{remark}

All constructions from above extend trivially to $\cz^N$-valued Sobolev spaces. 

\subsection{Mapping properties of singular Green operators}\label{sec:sobolev03}

As mentioned already in \eqref{eq:123},  
$\calB^{d,\nu;r}_G((L,M),(L^\prime,M^\prime))$ is embedded in 
\begin{align*}
 S^{d,\nu}(H^{s}(&\rz_+,\cz^{L})\oplus\cz^{M},
 H^{s^\prime}(\rz_+,\cz^{L^\prime})\oplus\cz^{M^\prime})\\
 =&\,S^{d}(H^{s}(\rz_+,\cz^{L})\oplus\cz^{M},
H^{s^\prime}(\rz_+,\cz^{L^\prime})\oplus\cz^{M^\prime})+\\
  &+\wt{S}^{d,\nu}(H^{s}(\rz_+,\cz^{L})\oplus\cz^{M},
H^{s^\prime}(\rz_+,\cz^{L^\prime})\oplus\cz^{M^\prime})_{\bfkappa,\bfkappa} 
\end{align*}
for every choice of of $s>r-\frac{1}{2}$ and $s^\prime\in\rz$. Representing 
$a\in\calB^{d,\nu;r}_G((L,M),(L^\prime,M^\prime))$ as $a=a_0+\wt{a}$ in this sense, we 
obtain immediately the mapping properties stated in the parts a$)$ and e$)$ of Theorem \ref{thm:mapping}. 
In particular, employing the identifications \eqref{eq:h-w1} and \eqref{eq:h-w2}, we find that 
 $$\op(a_0)(\mu)\in 
   \scrL\left(
    \begin{matrix}
    H^{(s,t),\mu}(\rz^n_+,\cz^L)\\ \oplus\\ H^{(s+t,0),\mu}(\rz^{n-1},\cz^M)
    \end{matrix},
    \begin{matrix}
    H^{(s-d,t),\mu}(\rz^n_+,\cz^{L^\prime})\\ \oplus\\ H^{(s+t-d,0),\mu}(\rz^{n-1},\cz^{M^\prime}),
    \end{matrix}
   \right)$$
and 
 $$\op(\wt{a})(\mu)\in 
   \scrL\left(
   \begin{matrix}
    H^{(s,t),\mu}(\rz^n_+,\cz^L)\\ \oplus\\ H^{(s+t,0),\mu}(\rz^{n-1},\cz^M)
   \end{matrix},
   \begin{matrix}
    \calW^{(s+t+\nu-d,-\nu),\bfmu}(\rz^{n-1},H^{s+t-d}(\rz_+,\cz^{L^\prime}))
    \\ \oplus\\ \calW^{(s+t+\nu-d,-\nu),\bfmu}(\rz^{n-1},\cz^{M^\prime})
   \end{matrix}
   \right).$$
Now, using the estimate \eqref{eq:param-estimate} $($with $s$ subsituted by $s+t+d-\nu$, $t$ substituted by $0$, and $r$ substituted by $-\nu)$, we conclude the following: 

\begin{theorem}
Let $a\in\calB^{d,\nu;r}_G((L,M),(L^\prime,M^\prime))$ 
and $s,t\in\rz$ with $s>r-\frac{1}{2}$. Then 
 $$\frac{\op(a)(\mu)}{1+\spk{\mu}^{-\nu}}\in 
\scrL\left(
\begin{matrix}
H^{(s,t),\mu}(\rz^n_+,\cz^L)\\ \oplus\\ H^{(s+t,0),\mu}(\rz^{n-1},\cz^M)
\end{matrix},
\begin{matrix}
H^{(s-d,t),\mu}(\rz^n_+,\cz^{L^\prime})\\ \oplus\\ H^{(s+t-d,0),\mu}(\rz^{n-1},\cz^{M^\prime}),
\end{matrix}
\right).$$
\end{theorem}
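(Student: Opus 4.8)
The plan is to decompose $a = a_0 + \wt{a}$ according to the non-direct sum definition of $S^{d,\nu}$, with $a_0 \in S^d_{1,0}$ strongly parameter-dependent and $\wt{a} \in \wt{S}^{d,\nu}_{1,0}(\ldots)_{\bfkappa,\bfkappa}$, and to treat the two pieces separately. For $a_0$, part e) of Theorem \ref{thm:mapping} together with the identifications \eqref{eq:h-w1} and \eqref{eq:h-w2} already gives precisely the claimed mapping property between the parameter-dependent anisotropic Sobolev spaces $H^{(s,t),\mu}$, with no loss of powers of $\spk{\mu}$; that term can simply be absorbed into the bounded operator on the left-hand side since $\op(a_0)(\mu)/(1+\spk{\mu}^{-\nu})$ is a fortiori bounded. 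So the entire issue is the weakly parameter-dependent part $\wt{a}$.

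For $\wt{a}$, I would first apply part a) of Theorem \ref{thm:mapping}, which says that $\op(\wt{a})(\mu)$ maps $\calW^{(s,t),\bfmu}(\rz^n, E_0)$ continuously to $\calW^{(s+\nu-d, t-\nu),\bfmu}(\rz^n, E_1)$, with $E_0, E_1$ the appropriate spaces $H^{s}(\rz_+,\cz^L)\oplus\cz^M$ etc. (and recalling that $H^s_0$ and $H^s$ differ only by the harmless factor $e_+$, valid because $s > r-1/2 > -1/2$). Using \eqref{eq:h-w1}--\eqref{eq:h-w2} on the source side this reads: $\op(\wt{a})(\mu)$ maps $H^{(s,t),\mu}(\rz^n_+,\cz^L)\oplus H^{(s+t,0),\mu}(\rz^{n-1},\cz^M)$ into the target $\calW$-spaces of bidegree $(s+t+\nu-d, -\nu)$ with weight $\bfmu$. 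The key reduction step is then the norm comparison \eqref{eq:param-estimate}: applying it with the substitution $s \rightsquigarrow s+t+d-\nu$, $t\rightsquigarrow 0$, $r\rightsquigarrow -\nu$ converts $\|\cdot\|_{\calW^{(s+t+\nu-d,-\nu),\bfmu}}$ into $\spk{\mu}^{(-\nu)_+}\|\cdot\|_{\calW^{(s+t-d,0),\bfmu}}$, and the latter is, again by \eqref{eq:h-w1}--\eqref{eq:h-w2}, exactly $\spk{\mu}^{(-\nu)_+}\|\cdot\|_{H^{(s-d,t),\mu}}$. Thus $\op(\wt{a})(\mu)/\spk{\mu}^{(-\nu)_+}$ — equivalently $\op(\wt{a})(\mu)/(1+\spk{\mu}^{-\nu})$ after splitting into the cases $\nu \ge 0$ and $\nu < 0$ and observing $\spk{\mu}^{(-\nu)_+} \preceq 1 + \spk{\mu}^{-\nu}$ in both — is uniformly bounded between the desired $H^{(\bullet),\mu}$-spaces.

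I would then combine the two estimates: $\op(a)(\mu)/(1+\spk{\mu}^{-\nu}) = \op(a_0)(\mu)/(1+\spk{\mu}^{-\nu}) + \op(\wt{a})(\mu)/(1+\spk{\mu}^{-\nu})$, both summands uniformly bounded into $H^{(s-d,t),\mu}(\rz^n_+,\cz^{L'})\oplus H^{(s+t-d,0),\mu}(\rz^{n-1},\cz^{M'})$. One small bookkeeping point to address: the cited instance of part a) of Theorem \ref{thm:mapping} is stated for a single Hilbert space $E_j$ with one group action, whereas here $E_j = H^{s}(\rz_+,\cz^{L})\oplus\cz^{M}$; but by Definition \ref{def:btilde} $\wt{a}$ lies in the $\bfkappa,\bfkappa$-class for exactly this pair of direct-sum spaces with the trivially-extended group action, so the theorem applies verbatim, and the $\cz^M$-summand contributes the scalar Sobolev space $H^{(s+t,0),\mu}(\rz^{n-1},\cz^M)$ by \eqref{eq:h-w2}.

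I do not expect a genuine obstacle here: the statement is essentially a corollary of Theorem \ref{thm:mapping} plus the interpolation-type inequality \eqref{eq:param-estimate}, and the proof in the excerpt is already sketched in the paragraph immediately preceding the theorem. The only point requiring a little care is the precise exponent bookkeeping in \eqref{eq:param-estimate} — checking that the substitution indeed produces the factor $\spk{\mu}^{(-\nu)_+}$ and not some other power — and the reconciliation of $\spk{\mu}^{(-\nu)_+}$ with the stated normalization $1+\spk{\mu}^{-\nu}$, which is immediate since for $\nu \ge 0$ both sides are $\asymp 1$ up to $\spk{\mu}^{-\nu} \le 1$, while for $\nu < 0$ one has $\spk{\mu}^{-\nu} = \spk{\mu}^{|\nu|} = \spk{\mu}^{(-\nu)_+} \le 1 + \spk{\mu}^{-\nu}$.
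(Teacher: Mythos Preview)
Your proposal is correct and follows essentially the same approach as the paper: the paper's proof is the paragraph immediately preceding the theorem, which decomposes $a=a_0+\wt a$, applies parts e) and a) of Theorem~\ref{thm:mapping} together with the identifications \eqref{eq:h-w1}--\eqref{eq:h-w2}, and then invokes \eqref{eq:param-estimate} with exactly the substitutions you describe. Your additional remark reconciling $\spk{\mu}^{(-\nu)_+}$ with $1+\spk{\mu}^{-\nu}$ makes explicit a step the paper leaves to the reader.
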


\section{Appendix: A result on spectral invariance}\label{sec:app02}

We shall proof a characterization of pseudodifferential operators with operator-valued symbols 
in terms of mapping properties of certain commutators. In the scalar-valued setting, this result 
is a simple special case of a well-known theorem of Beals \cite[Theorem 1.4]{Beal77}. We derive 
spectral invariance of two psudodifferential algebras. 

Let $\mathfrak{A}\subset\scrL(X,Y)$ be a set of bounded operators between Banach spaces 
$X$ and $Y$. By uniform boundedness principle, the following three statements are equivalent$:$
\begin{itemize}
 \item[i$)$] $\mathfrak{A}$ is a bounded set. 
 \item[ii$)$] $\mathfrak{A}_x:=\{Ax\mid A\in\mathfrak{A}\}$ is a bounded subset of $Y$ 
  for every $x\in X$. 
 \item[iii$)$] $y^\prime(\mathfrak{A}_x)$ is a bounded set for every $x\in X$ and 
  $y^\prime\in Y^\prime$. 
\end{itemize}
If $E$ is a Hilbert space we thus obtain the following:

\begin{lemma}
Let $\mathfrak{A}\subset\scrL(E)$. Then $\mathfrak{A}$ is bounded if and only if 
$\{(Ae,f)\mid A\in\mathfrak{A}\}$ are bounded sets for every $e,f\in E$. 
\end{lemma}

\begin{corollary}\label{cor:smooth}
Let $u:\rz^n\to\scrL(E)$ be a function. Then $u$ is smooth if and only if the functions  
$(u(\cdot)e,f)$ are smooth for every $e,f\in E$. 
\end{corollary}
\begin{proof}
We claim that if all $(u(\cdot)e,f)$ are $(N+1)$-times continuously differentiable, then $u$ is 
$N$-times continuously differentiable. By Induction it suffices to show this for $N=0$. 
Given $x_0\in\rz$, define 
 $$\mathfrak{A}=\left\{\frac{u(x_0+h)-u(x_0)}{|h|}\mid 0<|h|\le1\right\}.$$
By assumption on $u$, all sets $\{(Ae,f)\mid A\in\mathfrak{A}\}$ are bounded, hence $\mathfrak{A}$ 
is bounded. Clearly this implies continuity of $u$ in $x_0$. 
\end{proof}

For an operator $B:\scrS(\rz^n,E)\to \scrS^\prime (\rz^n,E)$ let 
 $$L_kB=iBx_k-ix_kB, \qquad M_kB=D_{x_k}B-BD_{x_k}\qquad (1\le k\le n),$$
where $x_k$ means the operator of multiplikation by the function $x\mapsto x_k$. Moreover, set
 $$B^{(\alpha)}_{(\beta)}=L_1^{\alpha_1}\cdots L_n^{\alpha_n}M_1^{\alpha_1}\cdots M_n^{\alpha_n}B,
 \qquad \alpha,\beta\in\nz_0.$$ 

Let us denote by $S^m_{\rho,\delta}(\rz^n;E,E)$ the operator-valued H\"ormander class of all smooth functions $b:\rz^n\times\rz^n\to \scrL(E)$ satisfying, for every order of derivatives,  
 $$\|D^\alpha_\xi D^\beta_x b(x,\xi)\|\preceq \spk{\xi}^{m-\rho|\alpha|+\delta|\beta|}.$$
 
\begin{theorem}\label{thm:Beals}
For a linear operator $B:\scrS(\rz^n,E)\to \scrS^\prime (\rz^n,E)$ the following are equivalent$:$
\begin{itemize}
	\item[$(1)$] There exists a symbol $b(x,\xi)\in S^0_{0,0}(\rz^n;E,E)$ with $B=\op(b)$. 
	\item[$(2)$] All iterated commutators $B^{(\alpha)}_{(\beta)}$ extend to operators in 
	 $\scrL(L^2(\rz^n,E))$. 	 
\end{itemize}
\end{theorem}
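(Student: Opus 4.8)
The plan is to reduce the operator-valued statement to the scalar-valued Beals theorem \cite[Theorem 1.4]{Beal77} by testing against vectors of $E$, using Corollary \ref{cor:smooth} to upgrade pointwise-weak smoothness to smoothness in the operator norm. The implication $(1)\Rightarrow(2)$ is the routine direction: if $B=\op(b)$ with $b\in S^0_{0,0}(\rz^n;E,E)$, then one computes that $L_kB=\op(\partial_{\xi_k}b)$ and $M_kB=\op(D_{x_k}b)$, so each iterated commutator $B^{(\alpha)}_{(\beta)}$ equals $\op(\partial_\xi^\alpha D_x^\beta b)$, and by the symbol estimates $\partial_\xi^\alpha D_x^\beta b\in S^0_{0,0}(\rz^n;E,E)$; the $L^2(\rz^n,E)$-boundedness of zero-order $S^0_{0,0}$ operator-valued pseudodifferential operators is the standard Calderón--Vaillancourt estimate (valid verbatim in the Hilbert-space-valued setting, since its proof uses only the Cauchy--Schwarz inequality and integration by parts). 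First I would record this, referring to the literature for the vector-valued Calderón--Vaillancourt bound.

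For the harder direction $(2)\Rightarrow(1)$, fix $e,f\in E$ and consider the scalar operator $B_{e,f}$ on $\scrS(\rz^n)$ defined by $\langle B_{e,f}\varphi,\psi\rangle=\langle B(\varphi\,e),\psi\,f\rangle$ for $\varphi,\psi\in\scrS(\rz^n)$. The commutator operations $L_k$ and $M_k$ commute with the scalarization $e,f\mapsto B_{e,f}$, so $(B_{e,f})^{(\alpha)}_{(\beta)}=(B^{(\alpha)}_{(\beta)})_{e,f}$; since $B^{(\alpha)}_{(\beta)}\in\scrL(L^2(\rz^n,E))$ with norm independent of $e,f$ up to the factor $\|e\|_E\|f\|_E$, each $(B_{e,f})^{(\alpha)}_{(\beta)}$ extends to $\scrL(L^2(\rz^n))$. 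By the scalar Beals theorem there is a symbol $b_{e,f}\in S^0_{0,0}(\rz^n)$ with $B_{e,f}=\op(b_{e,f})$, and inspection of Beals' proof (or uniqueness of the symbol) shows $(e,f)\mapsto b_{e,f}(x,\xi)$ is sesquilinear and, by the uniform bounds on the commutators, satisfies $|b_{e,f}(x,\xi)|\preceq\|e\|_E\|f\|_E$ together with the analogous estimates for all $D_\xi^\alpha D_x^\beta b_{e,f}$. Hence for each $(x,\xi)$ there is a bounded operator $b(x,\xi)\in\scrL(E)$ with $(b(x,\xi)e,f)=b_{e,f}(x,\xi)$, and $\|D_\xi^\alpha D_x^\beta b(x,\xi)\|_{\scrL(E)}\preceq 1$ for every $\alpha,\beta$. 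By Corollary \ref{cor:smooth}, $b$ is smooth as an $\scrL(E)$-valued function, so $b\in S^0_{0,0}(\rz^n;E,E)$. Finally $B=\op(b)$ because both sides agree after pairing with $\varphi\,e$ and $\psi\,f$ for all $\varphi,\psi\in\scrS(\rz^n)$ and all $e,f\in E$, and such elements are total in $\scrS(\rz^n,E)$ and $\scrS'(\rz^n,E)$ respectively.

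The main obstacle is the bookkeeping in the middle step: one must extract from the scalar Beals theorem not merely the existence of $b_{e,f}$ for each fixed pair $(e,f)$, but symbol estimates that are \emph{uniform} in $(e,f)$ and \emph{sesquilinear} in $(e,f)$, so that the $b_{e,f}$ genuinely assemble into a single operator-valued symbol. This requires either a quantitative reading of Beals' argument (his construction of the symbol is given by an explicit oscillatory-integral formula applied to $B$, which is manifestly linear in $B$ and whose seminorm bounds depend only on finitely many of the commutator norms) or an appeal to the closed-graph theorem to see that the map $B\mapsto b$ furnished by the scalar theorem is continuous between the relevant Fréchet spaces. Either route is standard but must be spelled out; once uniformity and sesquilinearity are in hand, the passage to $\scrL(E)$-valued $b$ via the Riesz representation theorem and Corollary \ref{cor:smooth} is immediate.
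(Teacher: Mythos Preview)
Your proposal is correct and follows essentially the same route as the paper: reduce to the scalar Beals theorem via the scalarizations $B_{e,f}$, use that the symbol estimates in Beals' construction depend only on finitely many commutator norms to get the uniform bound $|D^\alpha_\xi D^\beta_x b_{e,f}|\le C_{\alpha\beta}\|e\|\|f\|$, assemble $b(x,\xi)$ from the sesquilinear form, invoke Corollary~\ref{cor:smooth} for smoothness, and conclude by density of tensors in $\scrS(\rz^n,E)$. The paper makes the ``quantitative reading of Beals' argument'' explicit by summarizing his regularization $B_\eps$ and oscillatory-integral formula for the symbol, precisely to record that the constants are finite linear combinations of the $L^2$-commutator norms; your alternative via the closed-graph theorem would work as well, but the paper chooses the first option.
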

\begin{proof}
In case $E=\cz$ this is a special case of \cite[Theorem 1.4]{Beal77}. Let us summarize the proof 
given there$:$ Let $h$ be a smooth cut-off function with $h\equiv 1$ in a neighborhood of the origin. Define 
 $$B_\eps=\op(q_\eps)B\op(p_\eps),\qquad q_\eps(x)=h(\eps x),\;p_\eps(\xi)=h(\eps\xi).$$
Let 
 $$b_{0,\eps}(x,y,\xi)=e_{-\xi}(x)B(e_\xi g(\cdot-y)),$$
where $e_\xi(x)=e^{ix\xi}$ and $g\in\scrS(\rz^n)$ is even with Fourier transform supported in the 
unit-ball centered at the origin. One shows that $b_{0,\eps}$ is a smooth function with 
 $$|D^\beta_x D^\gamma_y D^\alpha_\xi b_{0,\eps}(x,y,\xi)|\le C_{\alpha\beta\gamma}$$ 
uniformly in $(x,y,\xi)$ and $0<\eps\le1$, with constants that are (respectively, 
can be chosen to be) finite linear combinations in the $\scrL(L^2(\rz^n))$-operator norms of 
the iterated commutators of $B$. Then 
 $$b_\eps(x,\xi)=\iint e^{iy\eta}b_{0,\eps}(x,x+y,\xi+\eta)\,dy\dbar\eta$$
$($oscillatory integral$)$ is a smooth function satisfying analogous estimates. Then 
$b=\lim_{\eps\to0}b_{\eps}$ exists in $S^0_{0,0}(\rz^n)$ and $B=\op(b)$. Again, the constants in the 
symbol estimates are as described before. 

Now let us turn to the general case of a Hilbert space $E$. 
For an operator $T:\scrS(\rz^n,E)\to \scrS^\prime (\rz^n,E)$ and $e,f\in E$ define 
$T_{e,f}:\scrS(\rz^n)\to \scrS^\prime (\rz^n)$ by 
 $$\spk{T_{e,f}\phi,\psi}=\spk{T(\phi\otimes e),\psi\otimes f},\qquad 
   \phi,\psi\in\scrS(\rz^n),$$
where $\spk{\cdot,\cdot}$ is the pairing of distributions and test functions and 
$(\phi\otimes e)(x)=\phi(x)e$. 
Then $(B_{e,f})^{(\alpha)}_{(\beta)}=(B^{(\alpha)}_{(\beta)})_{e,f}$ 
and for the $L^2$-extension holds 
 $$\|(B_{e,f})^{(\alpha)}_{(\beta)}\|_{\scrL(L^2(\rz^n))}\le 
   \|B^{(\alpha)}_{(\beta)}\|_{\scrL(L^2(\rz^n,E))}\|e\|\|f\|.$$
By the scalar-valued result, there exists a symbol $b_{e,f}\in S^0_{0,0}(\rz^n)$ with $B_{e,f}=\op(b_{e,f})$ and 
\begin{equation}\label{eq:bilin}
 |D^\alpha_\xi D^\beta_x b_{e,f}(x,\xi)|\le C_{\alpha\beta}\|e\|\|f\|.
\end{equation}
Now define $b(x,\xi):H\to H$ by 
 $$\spk{b(x,\xi)e,f}=b_{e,f}(x,\xi),\qquad e,f\in H.$$
Then $b\in S^0_{0,0}(\rz^n;E,E)$ due to \eqref{eq:bilin} and Corollary \ref{cor:smooth}. Moreover, 
\begin{align*}
 \spk{\op(b)(\phi\otimes e),\psi\otimes f}
 &=\int\Big(\int e^{-ix\xi} b(x,\xi)\wh{\phi}(\xi)e\,\dbar\xi,\psi(x)f\Big)_E\,dx\\
 &=\iint e^{-ix\xi} (b(x,\xi)e,f)_E\wh{\phi}(\xi)\psi(x)\,dx\dbar\xi\\
 &=\spk{\op(b_{e,f})\phi,\psi}=\spk{B(\phi\otimes e),\psi\otimes f}. 
\end{align*}
Since finite linear combinations of functions of the form $u\otimes e$ with $u\in\scrS(\rz^n)$ and $e\in E$ are a dense subspace of $\scrS(\rz^n,E)=\scrS(\rz^n)\,\wh{\otimes}_\pi E$, we conclude that 
$B=\op(b)$. 
\end{proof}

An algebra $\calA\subset\scrL(E)$, $E$ an arbitrary Hilbert space, is called spectrally invariant 
if an element $a\in\calA$ is invertible in $\calA$ if and only if it is invertible in $\scrL(E)$, i.e., 
 $$\calA^{-1}=\calA\cap \scrL(E)^{-1},$$
where $\mathcal{X}^{-1}$ denotes the group of invertible elements of the algebra $\mathcal{X}$. 

\begin{corollary}\label{cor:specinv01}
Let $\calA=\big\{\op(a)\mid a\in S^0_{0,0}(\rz^n;E,E)\big\}$. 
Then $\calA\subset\scrL(L^2(\rz^n,E))$ is spectrally invariant. 
\end{corollary}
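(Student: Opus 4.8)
The plan is to deduce spectral invariance directly from Theorem~\ref{thm:Beals}. First I would take $a\in S^0_{0,0}(\rz^n;E,E)$ with $A=\op(a)$ invertible in $\scrL(L^2(\rz^n,E))$, and set $B:=A^{-1}$. The goal is to show $B=\op(b)$ for some $b\in S^0_{0,0}(\rz^n;E,E)$, for then $B\in\calA$ and $\calA$ is spectrally invariant. By Theorem~\ref{thm:Beals}, applied now with $B=A^{-1}$ in place of $B$, it suffices to verify that every iterated commutator $B^{(\alpha)}_{(\beta)}=L_1^{\alpha_1}\cdots L_n^{\alpha_n}M_1^{\beta_1}\cdots M_n^{\beta_n}B$ extends to a bounded operator on $L^2(\rz^n,E)$. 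Since $A\in\calA$, Theorem~\ref{thm:Beals} also tells us that all commutators $A^{(\alpha)}_{(\beta)}$ are bounded on $L^2(\rz^n,E)$.

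The key algebraic observation is the standard commutator identity for inverses: for any derivation $\delta$ (here each $L_k$ and $M_k$ acts as a derivation on the algebra of operators, since $L_kB=i[B,x_k]$ and $M_kB=[D_{x_k},B]$), one has
\begin{equation*}
 \delta(A^{-1})=-A^{-1}(\delta A)A^{-1}.
\end{equation*}
Iterating this, any higher commutator $B^{(\alpha)}_{(\beta)}$ with $B=A^{-1}$ is a finite sum of terms of the form
\begin{equation*}
 \pm\,A^{-1}\big(A^{(\alpha_1)}_{(\beta_1)}\big)A^{-1}\big(A^{(\alpha_2)}_{(\beta_2)}\big)A^{-1}\cdots A^{-1}\big(A^{(\alpha_\ell)}_{(\beta_\ell)}\big)A^{-1},
\end{equation*}
where the multi-indices satisfy $\alpha_1+\cdots+\alpha_\ell=\alpha$ and $\beta_1+\cdots+\beta_\ell=\beta$ and each $A^{(\alpha_i)}_{(\beta_i)}$ is itself an iterated $L,M$-commutator of $A$. (This is a Leibniz-type expansion; the precise combinatorics mirror \eqref{eq:deriv_inv}.) Every factor $A^{-1}$ is bounded on $L^2(\rz^n,E)$ by hypothesis, and every factor $A^{(\alpha_i)}_{(\beta_i)}$ is bounded on $L^2(\rz^n,E)$ because $A\in\calA$ and Theorem~\ref{thm:Beals} applies to $A$. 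Hence each summand, being a composition of bounded operators, is bounded, and so is $B^{(\alpha)}_{(\beta)}$. Applying Theorem~\ref{thm:Beals} in the reverse direction (statement $(2)\Rightarrow(1)$) to $B$ yields a symbol $b\in S^0_{0,0}(\rz^n;E,E)$ with $B=\op(b)$, i.e.\ $B\in\calA$. This shows $\calA\cap\scrL(L^2(\rz^n,E))^{-1}\subseteq\calA^{-1}$; the inclusion $\calA^{-1}\subseteq\calA\cap\scrL(L^2(\rz^n,E))^{-1}$ is trivial since $\calA\subset\scrL(L^2(\rz^n,E))$.

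The only point requiring care — and the main (though mild) obstacle — is making the Leibniz expansion of $\delta^{(\alpha)}_{(\beta)}(A^{-1})$ precise: one must check that the operators $L_k$ and $M_k$ act as commuting derivations on the relevant operator algebra (they do, since each is $\mathrm{ad}$ of a fixed operator, and $x_k$, $D_{x_k}$ generate the Heisenberg relations so that mixed commutators are constants), and then justify the iteration of $\delta(A^{-1})=-A^{-1}(\delta A)A^{-1}$ on the level of operators $\scrS(\rz^n,E)\to\scrS'(\rz^n,E)$ rather than merely formally. Since $A^{-1}$ maps $\scrS$ into itself (being in $\calA$, hence a $\psi$DO of order $0$) and the commutators are genuine operators, the manipulation is legitimate; I would state the Leibniz expansion as a short lemma and leave the elementary verification to the reader. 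No genuinely new analysis is needed beyond Theorem~\ref{thm:Beals}.
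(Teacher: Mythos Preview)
Your approach is essentially identical to the paper's: both prove the derivation identity $L_kA^{-1}=-A^{-1}(L_kA)A^{-1}$ (and likewise for $M_k$), iterate it to express every commutator $(A^{-1})^{(\alpha)}_{(\beta)}$ as a finite sum of products of $A^{-1}$ and iterated commutators of $A$, and then invoke Theorem~\ref{thm:Beals}. One small slip in your final paragraph: you justify the manipulations by saying ``$A^{-1}$ maps $\scrS$ into itself (being in $\calA$\ldots)'', which is exactly what you are trying to prove; the identity should instead be verified using only that $A^{-1}\in\scrL(L^2(\rz^n,E))$ and that each $L_kA$, $M_kA$ is already a bounded operator on $L^2(\rz^n,E)$ (as $\op$ of a symbol in $S^0_{0,0}$).
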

\begin{proof}
Let $A=\op(a):L^2(\rz^n,E)\to L^2(\rz^n,E)$ be an isomorphism with inverse $A^{-1}$. Then 
 $$L_k A^{-1}=i A^{-1}x_k-ix_kA^{-1}=A^{-1}\big(ix_kA-iAx_k\big)A^{-1}=-A^{-1}(L_k A)A^{-1}$$
and, analogously, $M_k A^{-1}=-A^{-1}(M_k A)A^{-1}$. It follows that an iterated commutator of 
$A^{-1}$ is a finite linear combination of products, where each factor is either $A^{-1}$ 
or an iterated commutator of $A$. Hence every such iterated commutator extends to a bounded map 
in $L^2(\rz^n,E)$. By Theorem \ref{thm:Beals}, $A^{-1}\in\calA$. 
\end{proof}

\begin{corollary}\label{cor:specinv02}
Let $\calA=\big\{1+\op(a)\mid a\in S^{-\infty}(\rz^n;E,E)\}$. 
Then $\calA\subset\scrL(L^2(\rz^n,E))$ is spectrally invariant. 
\end{corollary}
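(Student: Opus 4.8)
The plan is to compare $\calA$ with the larger class $\calA_0:=\{\op(c)\mid c\in S^0_{0,0}(\rz^n;E,E)\}$, whose spectral invariance is already established in Corollary \ref{cor:specinv01}. Since $S^{-\infty}(\rz^n;E,E)\subset S^0_{0,0}(\rz^n;E,E)$ and the constant symbol $1$ lies in $S^0_{0,0}(\rz^n;E,E)$, any $A=1+\op(a)$ with $a\in S^{-\infty}(\rz^n;E,E)$ belongs to $\calA_0$. Hence, if $A$ is invertible in $\scrL(L^2(\rz^n,E))$, Corollary \ref{cor:specinv01} immediately yields $A^{-1}\in\calA_0$, say $A^{-1}=1+\op(d)$ with $d\in S^0_{0,0}(\rz^n;E,E)$. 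Everything then reduces to showing that this $d$ is in fact a regularizing symbol, $d\in S^{-\infty}(\rz^n;E,E)$.

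To prove $d\in S^{-\infty}$ I would run an order-reduction argument. Two elementary observations are needed. First, right composition of a pseudodifferential operator with a Fourier multiplier simply multiplies the left symbol, $\op(b)\,\op(\spk{\cdot}^N)=\op\big(b\,\spk{\cdot}^N\big)$ for each $N\in\nz_0$; and if $b\in S^{-\infty}(\rz^n;E,E)$ then $b\,\spk{\cdot}^N\in S^{-\infty}(\rz^n;E,E)$ as well, since the rapid decay in $\xi$ of all derivatives of $b$ absorbs the polynomial factor. Second, $\calA_0$ is an algebra: $L_k$ and $M_k$ act as derivations, so every iterated commutator of a product of two elements of $\calA_0$ is a finite linear combination of products of iterated commutators of the factors, each of which is bounded on $L^2(\rz^n,E)$ by Theorem \ref{thm:Beals}; hence the product again lies in $\calA_0$. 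Granting these, apply the resolvent identity $A^{-1}-1=-A^{-1}\op(a)$ and compose on the right with $\op(\spk{\cdot}^N)$:
\[
 \op(d)\,\op(\spk{\cdot}^N)=(A^{-1}-1)\,\op(\spk{\cdot}^N)=-A^{-1}\,\op\big(a\,\spk{\cdot}^N\big).
\]
Here $A^{-1}\in\calA_0$ and $\op(a\,\spk{\cdot}^N)\in\calA_0$ (because $a\,\spk{\cdot}^N\in S^{-\infty}\subset S^0_{0,0}$, by Theorem \ref{thm:Beals}), so the right-hand side lies in $\calA_0$; thus $\op(d)\,\op(\spk{\cdot}^N)=\op(e_N)$ for some $e_N\in S^0_{0,0}(\rz^n;E,E)$, again by Theorem \ref{thm:Beals}. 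Comparing left symbols gives $d\,\spk{\cdot}^N=e_N\in S^0_{0,0}(\rz^n;E,E)$, i.e. $d\in S^{-N}_{0,0}(\rz^n;E,E)$. Since $N\in\nz_0$ was arbitrary and $\bigcap_{N\ge0}S^{-N}_{0,0}(\rz^n;E,E)=S^{-\infty}(\rz^n;E,E)$ by the very definitions, we conclude $d\in S^{-\infty}(\rz^n;E,E)$, hence $A^{-1}=1+\op(d)\in\calA$.

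The only step that is not pure bookkeeping is the passage from ``$A^{-1}$ is an $S^0_{0,0}$-operator'' to ``$A^{-1}-1$ is regularizing'': this is where the Beals-type characterization of Theorem \ref{thm:Beals} and the spectral invariance of $\op(S^0_{0,0})$ from Corollary \ref{cor:specinv01} genuinely enter, combined with the trivial fact that multiplying against $\spk{\cdot}^N$ keeps us inside $\calA_0$ while lowering the order. All the rest --- the resolvent identity, the behaviour of symbols under right composition with Fourier multipliers, and the identity $\bigcap_N S^{-N}_{0,0}=S^{-\infty}$ --- is routine. It is worth noting that the same computation also shows that $\calA$ is a subalgebra of $\scrL(L^2(\rz^n,E))$, equivalently that $\op(S^{-\infty}(\rz^n;E,E))$ is a two-sided ideal in $\op(S^0_{0,0}(\rz^n;E,E))$, which is implicit in the very formulation of spectral invariance.
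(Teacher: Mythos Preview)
Your proof is correct but takes a different route from the paper's in the key step of upgrading $d\in S^0_{0,0}$ to $d\in S^{-\infty}$. The paper uses the second-order resolvent identity
\[
(1+\op(a))^{-1}=1-\op(a)+\op(a)\,(1+\op(a))^{-1}\,\op(a)=1+\op(-a+a\#b_0\#a),
\]
so that the correction term is visibly a sandwich $a\#b_0\#a$ with smoothing symbols on \emph{both} sides; one then reads off $-a+a\#b_0\#a\in S^{-\infty}$ in one line, invoking that $S^{-\infty}$ is a two-sided ideal in $S^0_{0,0}$. Your argument instead works from the first-order identity $A^{-1}-1=-A^{-1}\op(a)$ and recovers the decay order by order, conjugating with the Fourier multipliers $\spk{D}^N$ and appealing to Beals' characterization to conclude $d\spk{\cdot}^N\in S^0_{0,0}$ for every $N$. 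Both are standard manoeuvres; the paper's sandwich trick is shorter, while your order-reduction is more self-contained in that it does not take the ideal property $S^0_{0,0}\#S^{-\infty}\subset S^{-\infty}$ for granted but essentially re-proves it. Note, incidentally, that your first-order identity already gives $d=-b_0\#a$ directly, so if one is willing to quote the ideal property the Fourier-multiplier detour becomes unnecessary.
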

\begin{proof}
Let $A=1+\op(a):L^2(\rz^n,E)\to L^2(\rz^n,E)$ be an isomorphism. By the preevious corollary, 
$A^{-1}=\op(b_0)$ for some $b_0\in S^0_{0,0}(\rz^n;E,E)$. But then  
 $$\op(b_0)=(1+\op(a))^{-1}=1-\op(a)+\op(a)(1+\op(a))^{-1}\op(a)$$
shows that $A^{-1}=1+\op(b)$ with $b=-a+a\#b_0\#a\in S^{-\infty}(\rz^n;E,E)$. 
\end{proof}

\section{Appendix: Function spaces for the half-line}\label{sec:app01}

We recall here the definitions and some properties of all spaces of functions or distributions 
we shall need throughout this paper.  

\subsection{Bessel potential spaces}\label{sec:app01.1}
We denote by $H^s(\rz)=H^s_2(\rz)$, $s\in\rz$, the standard $L^2$-Sobolev paces, consisting of 
those tempered distributions $u\in\scrS^\prime(\rz)$ whose Fourier transform is a measurable function with 
$$\|u\|_{H^s(\rz)}:=\|\spk{\cdot}^s\wh{u}\|_{L^2(\rz)}<+\infty.$$
The subspace of those distributions whose support is a subset of $\rpbar:=[0,+\infty)$ is denoted by $H^s_0(\rpbar)$, 
$$H^s_0(\rpbar)=\{u\in H^s(\rz)\mid \mathrm{supp}\,u\subseteq\rpbar\},\qquad s\in\rz.$$
Obviously, $H^s_0(\rpbar)$ is a closed subspace of $H^s(\rz)$. Similarly, one definess $H^s(\overline{\rz}_-)$ with 
$\overline{\rz}_-:=(-\infty,0]$. Moreover, 
$$H^s(\rz_+)=\{v\in\scrD^\prime(\rz_+)\mid \exists\,u\in H^s(\rz):\; u|_{\rz_+}=v\}$$
is obtained by the restriction of Sobolov distributions from $\rz$ to $\rz_+$; it carries the norm 
$$\|v\|_{H^s(\rz_+)}:=\inf\{\|u\|_{H^s(\rz)}\mid u\in H^s(\rz),\; u|_{\rz_+}=v\}.$$
Obviously, 
$$H^s(\rz_+)\cong H^s(\rz)/H^s(\overline{\rz}_-).$$

\subsection{Cone Sobolev spaces}\label{sec:app01.2}

The change of variables $s=e^{-t}$ induces induces an isomorphism 
$\theta:\scrD^\prime(\rz_+)\to\scrD^\prime(\rz)$ by 
$$\spk{\theta u,\varphi}=\spk{u,\varphi(-\ln s)/s},\qquad \varphi\in\scrD(\rz).$$
Then we define 
$$\scrH^{s,\frac{1}{2}}(\rz_+):=\theta^{-1}H^s(\rz), \qquad 
\|u\|_{\scrH^{s,\frac{1}{2}}(\rz_+)}=\|\theta u\|_{H^s(\rz)}.$$
Multiplication with powers $t^{\gamma-1/2}$ yields the spaces 
$$\scrH^{s,\gamma}(\rz_+):=t^{\gamma-\frac{1}{2}}\scrH^{s,\frac{1}{2}}(\rz_+),\qquad 
\|u\|_{\scrH^{s,\gamma}(\rz_+)}=\|t^{\frac{1}{2}-\gamma} u\|_{\scrH^{s,\frac{1}{2}}(\rz)}.$$
Note that for for $k\in\nz_0$ we have 
$$u\in \scrH^{s,\gamma}(\rz_+) \iff t^{\frac{1}{2}-\gamma}(t\partial_t)^j u\in L^2(\rz_+,dt/t)\quad\forall\;1\le j\le k.$$
Now let $\omega\in\scrC^\infty_{\mathrm{comp}}(\rpbar)$ be a cut-off function with $\omega\equiv 1$ near $t=0$. Then 
\begin{align*}
u\in\calK^{s,\gamma}(\rz_+):\iff \omega u\in \scrH^{s,\gamma}(\rz_+)\text{ and } (1-\omega)u\in H^s(\rz_+) 
\end{align*}
with norm 
$$\|u\|_{\calK^{s,\gamma}(\rz_+)}=\| \omega u\|_{\scrH^{s,\gamma}(\rz_+)}+ \|(1-\omega)u\|_{H^s(\rz_+)}$$
defines the Hilbert spaces 
$ \calK^{s,\gamma}(\rz_+)$, $s,\gamma\in\rz$.
Up to equivalence of norms, this construction is independent on the coice of $\omega$. Finally, we consider spaces with power-weight at infinity, 
$$H^{s,\delta}_{0}(\rpbar):=\spk{t}^{-\delta}H^{s}_{0}(\rpbar),\qquad 
H^{s,\delta}(\rz_+):=\spk{t}^{-\delta}H^{s}(\rz_+),$$
as well as 
$$\calK^{s,\gamma}(\rz_+)^{\delta}:=\spk{t}^{-\delta}\calK^{s,\gamma}(\rz_+)$$
with obvious definition of the corresponding norms. 

\subsection{Duality}\label{sec:app01.3}

The standard $L^2(\rz_+)$-inner product induces non-degenerate sesqui-linear pairings 
$$H^{s,\delta}(\rz_+)\times H^{-s,-\delta}_0(\rpbar)\lra\cz,\qquad 
\calK^{s,\gamma}(\rz_+)^{\delta}\times \calK^{-s,-\gamma}(\rz_+)^{-\delta}\to\cz$$
which allows for the following identification of dual spaces$:$
$$H^{s,\delta}(\rz_+)^\prime=H^{-s,-\delta}_0(\rpbar),\qquad 
(\calK^{s,\gamma}(\rz_+)^{\delta})^\prime=\calK^{-s,-\gamma}(\rz_+)^{-\delta}.$$

\subsection{Rapidly decreasing functions}\label{sec:app01.4}

Let $I=\rz$ or $I=\rz_+$ and $E$ a Hilbert space. We denote by $\scrS(I,E)$ the space of rapidly decreasing functions, 
i.e., of all smooth functions $u:I\to E$ with 
 $$\sup_{t\in I}\spk{t}^N \|D^k_t u(t)\|<+\infty\qquad \forall\;k,N\in\nz_0.$$  
 
\begin{small}

\end{small}

\end{document}